\newcommand{\argmax}{\mathop{\mathrm{argmax}}}
\newcommand{\argmin}{\mathop{\mathrm{argmin}}}
\newcommand{\minimize}{\mathop{\mathrm{minimize}}}
\newcommand{\maximize}{\mathop{\mathrm{maximize}}}
\newcommand{\real}{\mathbb{R}}
\newcommand{\Ee}{{\mathbb E}}
\newtheorem{theorem}{Theorem}
\newtheorem{lemma}{Lemma}
\newtheorem{remark}{Remark}
\newtheorem{Example}{Example}[section]
\newcommand{\K}{\mathcal{K}}
\newcommand{\grad}{\nabla}
\newcommand{\hbeta}{\hat{\beta}}
\newcommand{\hz}{\hat{z}}
\newcommand{\solutionset}{{\cal S}}
\begin{document}

\title{Selective sampling after solving a convex problem}

\author{Xiaoying Tian Harris, Snigdha Panigrahi, Jelena Markovic, Nan Bi, Jonathan Taylor}
\runauthor{Tian Harris et al.}
\runtitle{Selective sampling}

\begin{abstract}
We consider the problem of selective inference after solving
a (randomized) convex statistical learning program in the form of a penalized or constrained
loss function.
Our first main result is a change-of-measure formula that 
describes many conditional sampling problems of interest
in selective inference.
Our approach is model-agnostic
in the sense that users may provide their own statistical model for inference, we simply provide
the modification of each distribution in the model after the selection. 

Our second main result describes the geometric
structure in the Jacobian appearing in the change of measure,
drawing connections to curvature measures appearing 
in Weyl-Steiner volume-of-tubes formulae. This Jacobian
is necessary for problems in which the convex penalty is not
polyhedral, with the prototypical example being group LASSO or the
nuclear norm. We derive explicit formulae for the Jacobian 
of the group LASSO.

To illustrate the generality of our method, we consider many examples throughout, varying
both the penalty or constraint in the statistical learning problem as well as the 
loss function, also considering selective inference after solving multiple statistical
learning programs. Penalties considered include LASSO, forward stepwise, stagewise algorithms, marginal screening and generalized LASSO.
Loss functions considered include squared-error, logistic, and log-det for covariance matrix estimation.

Having described the appropriate distribution
we wish to sample from through our first two results, we outline a framework for sampling using
a projected Langevin sampler
in the (commonly occuring) case that the distribution is log-concave. 
\end{abstract}

\maketitle


\section{Introduction}
\label{sec:introduction}

Based on the explosion of freely available and high quality
statistical software, practicing data scientists or statisticians 
can rely on  untold numbers of statistical learning methods that explore their
data. 
Having found an interesting pattern through such methods, in order
to report their findings in the scientific literature, 
the data scientist is confronted in defending the significance
of their findings. Naively assessing significance using methods
that ignore their exploration is recognized as flawed. A common, though
not often used, solution is to use data splitting to evaluate the significance. 
There has been significant recent research on developing methods of inference that
have the same type of guarantees as data splitting but offer more power.
These methods are generally referred to as methods for selective inference
\citep{benjamini_simultaneous,posi,knockoffs,optimal_inference,exact_lasso,selective_sqrt,randomized_response}. 
Loosely speaking, selective inference
recognizes the inherent selection biases in reporting the most ``significant'' results from various statistical models and 
attempts to adjust for the bias in a rigorous framework.

At a high level, selective inference involves two stages: first, query the data by applying some function.
Often this function might be the solution of some convex
optimization problem. Second, posit a model based on the outcome of the query and
perform inference for the parameters or statistical functionals in such models. 
In order to perform valid inference in this second stage,
a common approach espoused above is to condition on the result of the query. An alternative
approach is to address the problem via reduction to a problem of simultaneous inference \cite{posi}.
Under the conditional approach, when the query involves solving a convex problem, we are
interested in distributions of the form
\begin{equation}
\label{eq:inference:problem}
S | \hat{\beta}(S) \in A
\end{equation}
where $S \sim F$ represents our data, $\hat{\beta}$ denotes the solution to a convex
optimization problem and $A$ is some event of interest, set by the data analyst.

In this paper, we address computational problems in the conditional approach to
selective inference. In adopting this conditional approach, it quickly becomes
apparent distributions of the form \eqref{eq:inference:problem}
can be rather complex. In this work, we propose a unified sampling approach that is feasible for 
a wide variety of problems. Combined with the randomization idea in \cite{randomized_response},
this significantly improve both the applicability of selective inference in practice,
and the power of the selective tests.

\subsection{A canonical example}
\label{sec:canonical:example}

As a concrete example, we consider solving the LASSO \cite{lasso} at some fixed $\lambda$, as considered in
\cite{exact_lasso}. Suppose we observe data $(X, y)$, with $X \in \real^{n \times p}$ and $y \in \real^n$.
The query by the data analyst returns $\text{sign}(\hat{\beta}(X,y;\lambda))$ (with $\text{sign}(0)=0$)
where
\begin{equation}
\label{eq:lasso}
\hat{\beta}(X,y;\lambda) = \argmin_{\beta} \frac{1}{2} \|y-X\beta\|^2_2 + \lambda \|\beta\|_1.
\end{equation}
The sign vector can be expressed in terms of $\hat{E}$, the support of $\hat{\beta}$ and $\hat{z}_E$ the signs
of the non-zero coefficients of $\hat{\beta}$.
In principle, after observing $(\hat{E}, \hat{z}_E) = (E_{obs}, z_{E, obs})$,
there is no restriction on the choice of models. For simplicity, we posit a parametric normal model that is based 
on $E$ (dropping the $`obs'$ notation) and fixed $X$, 
\begin{equation}
\label{eq:param:normal}
{\cal M}_{E} = \left\{N(X_{E}b_{E}, \sigma^2_{{E}}I): b_{{E}} \in \real^{{E}}\right\}
\end{equation}
with $\sigma^2_E$ known. Target of inference is naturally $b_{j|E}$, the parameters of this model.
Selective inference essentially takes any distribution $F \in {\cal M}_E$ and consider its corresponding
selective distribution by conditioning on $\{(\hat{E},\hat{z}_E)=(E_{obs},z_{E,obs})\}$. In this case,
our selective distribution is exactly of the form of \eqref{eq:inference:problem}, with $A$ being the
quandrant specified by the nonzero coefficients $E$ and their signs $z_E$. 

Despite the simple form of $A$, the constraint induced a much more intricate set in the space of $y$.
In this special case, \cite{exact_lasso} has worked out the constraint set on $y$ and selective tests
can be computed explicitly. However, this is not generally true, especially in the context of randomized
selection procedures proposed by \cite{randomized_response, optimal_inference}. In many cases, 
the formsof selective distributions
are not explicitly computable and we have to use sampling methods to approximate the conditional distribution. 
Given a large enough sample from such distribution, we can construct the selective tests. In the following
paragraph, we first introduce the randomized Lasso procedure which is a special case of randomized
selection procedures proposed in \cite{randomized_response}. 
Then we would illustrate what is the appropriate conditional distribution to sample
and given a large sample from this distribution, how we carry out the tests for $b_{j|E}$.

First, we propose to incorporate randomness into \eqref{eq:lasso}. In particular, we consider solving
\begin{equation}
\label{eq:random:lasso}
\hat{\beta}(X,y,\omega;\lambda) = \argmin_{\beta} \frac{1}{2} \|y-X\beta\|^2_2 + \lambda \|\beta\|_1 - \omega^T\beta + \frac{\epsilon}{2} \|\beta\|^2_2
\end{equation}
where $\omega \sim G$ is a random vector independent of $(X,y)$, 
whose distribution is chosen by the data analyst, hence known. We will assume that 
$G$ is supported on all of $\real^p$ with density $g$. The
ridge term with small parameter $\epsilon$ ensures the problem above has a solution.
This resembles the ridge term in the elastic net proposed by \cite{elastic_net}.

This type of randomized convex program has been considered in \cite{randomized_response}.
It is espoused as it significantly increase the power in the inference stage without much loss
in the quality of the selected model. In particular, increasing
the scale of the randomization inevitably deteriorates the model selection quality, similar to using a smaller 
training set to choose a model in the context of data splitting. However, 
even a small amount of randomization has empirically shown a fairly noticable increase in power.
Another benefit of randomization include a form of robustness to rare selection events in the selective CLT of 
\cite{randomized_response}.

After solving \eqref{eq:random:lasso}, we use its solution $\hbeta(X,y,\omega;\lambda)$
for model selection. Similar to unrandomized version, the appropriate law after model selection is 
$$
y \mid \hbeta(X,y,\omega; \lambda) \in A, \quad y \sim F,
$$
where $F$ is a member of the parametric normal model specified by \eqref{eq:param:normal}.

Various selective tests can be constructed based on this law including the goodness-of-fit
tests and the p-values and confidence intervals for $b_{j|E}$s. We discuss how to construct
tests for $b_{j|E}$s here as they are a special case of testing a single parameter in a
multi-parameter exponential family. The general approach is laid out in \cite{optimal_inference}.

Selective inference recognizes the fact that interest in $b_{j|E}$ is due to the query returning $E$
as the nonzero variables of the solution to \eqref{eq:random:lasso}. 
In order to provide valid inference, we therefore consider only the part of sample
space that will yield the same model, i.e. conditioning on $\{(\hat{E},\hat{z}_E)=(E_{obs},z_{E,obs})\}$.
Conditioning each model in ${\cal M}_E$ yields a new parametric model,
the {\em selective model}
\begin{equation}
\label{eq:selective:model}
{\cal M}^*_{(E_{obs},z_{E,obs})} = \left\{F^*: \frac{dF^*}{dF}(\cdot) \propto 1_{\{(\hat{E},\hat{z}_E)=(E_{obs},z_{E,obs}), F \in {\cal M}_E\}}(\cdot)\right\}.
\end{equation}
where $(E_{obs}, z_{E,obs})$ are the observed variables and signs.

Note that this approach is not tied to the parametric model in \eqref{eq:param:normal}. In particular, 
when the parametric normal model is not appropriate, one might replace ${\cal M}$ with some other statistical model (i.e.~some
other collection of distributions) and carry out statistical inference in this new selective model. This approach was laid out
in detail in \cite{optimal_inference}, with some asymptotic justification for nonparametric models developed in
\cite{randomized_response}. In fact, in the \cite{exact_lasso} the (pre-conditioning) statistical model proposed was not ${\cal M}_E$, rather it was
the {\em saturated model}
\begin{equation}
\label{eq:sat:model}
{\cal M}= \left\{N(\mu,\sigma^2 I): \mu \in \mathbb{R}^n \right\}.
\end{equation}
A fairly simple calculation shows that the hypothesis tests and confidence intervals
are exactly the same whether we had started with \eqref{eq:sat:model} or \eqref{eq:param:normal}. However this is not the case for examples like forward stepwise, discussed below and in
\cite{sequential_selective,spacings}.

Let us now consider how we might go about inference in \eqref{eq:selective:model}. For instance,
suppose we are simply interested in a goodness-of-fit test and we will consider the model ${\cal M}=\{F_0\}$ so that
${\cal M}^*$ consists of $F_0$ restricted to the event that fixes the active set and signs 
of the LASSO to be $(E_{obs}, z_{E,obs})$. Call this distribution $F_0^*$. A natural way
to test $H_0:F=F_0$ would be follow Fisher's approach by choosing some test statistic $T=T(X,y)$ and compare our observed value
$T_{obs}$ to the distribution of $T$ under $F_0^*$. 
In order to do this, it is sufficient to 
describe the law $\hat{\beta}_{\lambda,*}(F_0)$, i.e. the push forward of $F_0$ and condition this law on the
event that fixes the active set and signs to be $(E_{obs},z_{E,obs})$. Unfortunately, this is no
easy task. Of course, a natural alternative is to use Monte Carlo. A large enough sample
from $F_0^*$ is sufficient to carry out this goodness-of-fit test. 
A version of this goodness-of-fit test
is considered in Section \ref{sec:goodness:of:fit} below.

The issue of what Monte Carlo method to use remains. Our main contribution in this work is an explicit
description of how to sample from distributions such as $F_0^*$. To be precise, we describe
how to sample from conditional distributions where the conditioning
depends on the solution to a convex problem. A naive way to sample from
this distribution would be to take a Monte Carlo sampler to draw from $F_0$, retaining only those points
where the active set and signs of the LASSO agree with $(E_{obs},z_{E,obs})$. This requires solving
\eqref{eq:lasso} at each sample point, and is clearly infeasible. We might say that this is a direct
way to condition the {\em push-forward} distribution $\hat{\beta}_{\lambda,*}(F_0)$ on the selection event. 

Our approach here relies on what is essentially the {\em pull-back} of the conditional distribution itself. 
In concrete terms, we construct an explicit inverse map to problems like \eqref{eq:lasso} on the selection
event. Then, instead of sampling \eqref{eq:inference:problem}, we
realize the law $F_0^*$ by sampling $\hbeta_{\lambda}$ together with some auxillary variables
on the conditioning event.
After aquiring these samples, we can use the inverse map to reconstruct $y$ that is essentially
distributed according to the law $F_0^*$. 
The construction for \eqref{eq:lasso} is described in Section \ref{sec:gaussian:lasso}. 

We will see that
the pull-back construction is particularly simple for the randomized lasso \eqref{eq:random:lasso}.
The randomization was inspired by the differential privacy literature \cite{reusable_holdout}, though
we will see below that it is already very similar to data splitting.
Besides making the pull-back simple to compute, we advocate the use of randomization in the query
stage in that conditional inference in the second stage is often more powerful if the selection stage 
is carried out with randomization than without \cite{optimal_inference,randomized_response}. Increasing
the scale of the randomization inevitably deteriorates the model selection quality, similar to using a smaller 
training set to choose a model in the context of data splitting. However, 
even a small amount of randomization has empirically shown a fairly noticable increase in power.
Another benefits of randomization include a form of robustness to rare selection events in the selective CLT of 
\citep{randomized_response}.

Inference for linear functionals is slightly more complicated than the simple goodness-of-fit test. 
To conclude this section, let us describe the form of the pull-back construction when we
have solved \eqref{eq:random:lasso} and observed the active set and signs $(E,z_{E})$ (dropping the $`obs'$ notation).
In principle, there is no restriction on the choice of models. For simplicity, we posit a parametric normal model that is based 
on a subset of variables $\bar{E}$:
$$
{\cal M}_{\bar{E}} = \left\{N(X_{\bar{E}}\beta_{\bar{E}}, \sigma^2_{\bar{E}}I): \beta_{\bar{E}} \in \real^{\bar{E}}\right\}
$$
with $\sigma^2_{\bar{E}}$ known. Often, the data analyst may opt to take $\bar{E}=E_{obs}$, though
this is not strictly necessary. Based on $(E,z_{E})$ she might consult the relevant
literature (or data independent of $y$) and choose to include or delete some variables from the set $E$. 
When the variance is unknown, one can either plug in a consistent estimate of $\sigma^2_E$ 
(c.f.~Lemma 14 of \cite{randomized_response}) or include the parameter $\sigma^2_{\bar{E}}$ in the model
and solve instead the square-root Lasso \cite{sqrt_lasso}.
If a parametric model is not appropriate then one might use a normal approximation for 
the pair $X^Ty$, and rewrite the {\em selection event} in terms of this statistic as opposed to just $y$. 
Such an approach is considered in \cite{randomized_response}, appealing to the selective CLT.

Having fixed model ${\cal M}_{\bar{E}}$, and knowing the density $g$ for the randomization $\omega$, suppose the
data analyst now wants to test $H_0:\beta_{j|\bar{E}}$ under the assumption $F \in {\cal M}_{\bar{E}}$.
Let $P_{\bar{E} \setminus j}$ denote orthogonal projection onto $\text{col}(X_{\bar{E}\setminus j})$.
In the normal $z$-test for $\beta_{j|\bar{E}}$, we essentially consider the distribution of
$$
r_j = y - \mu_{\bar{E} \setminus j}, \quad \mu_{\bar{E} \setminus j}=P_{\bar{E} \setminus j}y.
$$
$r_j$ can be recognized as the residual from the model with variables $\bar{E} \setminus j$. 
Selective tests for $\beta_{j|\bar{E}}$ is essentially considering the law of $r_j$
under the conditional distributions. 
An application of our main result Theorem \ref{thm:change:measure} implies that this law
(with the appropriate augmentation variables) has density proportional to
\begin{equation}
\label{eq:example:lasso}
(r_j, \hbeta_E, \hz_{-E}) \mapsto \exp \left(-\frac{1}{2 \sigma^2_{\bar{E}}} \|\mu_{\bar{E} \setminus j} + r_j\|^2_2 \right) 
\cdot g \left(X^TX_E\hbeta_E + \lambda \cdot \begin{pmatrix} \hz_E \\ \hz_{-E} \end{pmatrix} + \epsilon \cdot \begin{pmatrix} \hbeta_E \\ 0 \end{pmatrix}  \right)
\end{equation}
supported on 
$$\text{col}(P_{\bar{E} \setminus j})^{\perp} \times \left\{(\hbeta_E,\hz_{-E}): \text{sign}(\hbeta_E)=\hz_E, \|\hz_{-E}\|_{\infty} \leq 1 \right\}.$$ 
The augmentation variables $(\hbeta_E, \hz_{-E})$ are variables
involved in the optimization problem \eqref{eq:random:lasso}. 
Their support ensures we stay on the selection event of interest.
In fact, their support is recognizable as a subset of
$$
\left\{(\hbeta,\hz): \hz \in \partial (\| \cdot \|_1)(\hbeta) \right\},
$$
with $\partial$ denoting the subdifferential with respect to $\hbeta$. In particular, this is the 
subset for which the non-zero coefficients of $\hbeta$ are $E$ with signs $\hz_E$. 

In this simple case, the construction of the inverse map is remarkably simple.
If we write out the KKT condition for the optimization problem \eqref{eq:random:lasso},
we have
$$
X^TX\hbeta - X^Ty + \lambda \cdot \hz - \omega + \epsilon \cdot \hbeta = 0
$$
which can be rewritten as
$$
\omega = X^TX\hbeta - X^Ty + \lambda \hz + \epsilon \hbeta.
$$
where $(\hbeta, \hz)$ are the optimization variables and the subgradient of \eqref{eq:random:lasso}.
In this sense, we have inverted the KKT conditions on the selection event of interest, explicitly
parameterizing the pairs $(y,\omega)$ that yield active set and signs $(E,z_E)$. 

We call this
density the pull-back as this is recognizable precisely as the differential geometric pull-back of a 
differential form on the interior of the selection event, a subset of the manifold $\real^n \times \real^p$.
As $g$ is assumed to have a density, the boundary of the selection event is ignorable.
To those readers who do not recognize the term pull-back, 
Theorem \ref{thm:change:measure} essentially just applies a standard change of variables to realize a conditional law for $(y,\omega)$ via
a law for $(r_j,\beta_E,\hz_{-E})$. Note that the unselective law of $r_j$ is that of $y$ projected onto
the residual space form model $\bar{E} \setminus j$.

The reader may ask what we have gained through this exercise. By constructing an
inverse map for $(y, \omega)$ on the selection event, we do not have to check whether the pair
$(y,\omega)=(y,X^TX\hbeta-X^Ty+\lambda \cdot \hz + \epsilon \cdot \hbeta)$ satisfy the KKT conditions
for the pair $(E,z_E)$. That is, we have
removed the difficulty of conditioning from the problem. We are still left with a sampling problem, though
we have an explicit density as well as a fairly simple support. Further, the density \eqref{eq:example:lasso}
is log-concave in $(r_j,\beta_E, z_{-E})$ with a simple support hence methods such as projected Langevin \cite{bubeck_langevin} may
be run efficiently as the projection operator is cheap to compute. Section \ref{sec:implementation} discusses several
other examples.

Finally, and most importantly,
the reader who may also be the data scientist may ask what to do with samples from this density in order
to carry out a test of $H_0:\beta_{j|\bar{E}}(F)=0$.
Let $\hat{\beta}_{j|\bar{E}}:\real^n \rightarrow \real$ denote the
map that computes OLS coefficient $j$ in the model with variables $\bar{E}$. Then, given a
sufficiently large sample $(r_{j,b}, \beta_{E,b}, z_{-E,b})_{b=1}^B$ from density \eqref{eq:example:lasso},
the data analyst will compare the empirical distribution
of $$\left(\hat{\beta}_{j|\bar{E}}(\mu_{\bar{E} \setminus j} + r_{j,b})\right)_{b=1}^B
=\left(\hat{\beta}_{j|\bar{E}}(r_{j,b})\right)_{b=1}^B 
$$
to the observed value $\hat{\beta}_{j|\bar{E}}(y)$. 
Selective confidence intervals can be constructed by tilting the empirical distribution, though if the
true parameter is far from 0, then a reference distribution other than \eqref{eq:example:lasso} is
perhaps more appropriate. One might try replacing $r_{j,\perp}$ in the density above with $r_{j,\perp} - \bar{\beta}_{j|E}$
where $\bar{\beta}_{j|E}$ is an approximate selective MLE or pseudo MLE \cite{selective_bayesian}. We do
not pursue this further here, leaving this for future work. A data analyst concerned about slow mixing
in a given MCMC scheme to draw from \eqref{eq:example:lasso} may take some reversible MCMC algorithm
and carry out the exact tests described in \cite{besag_clifford}.

\subsection{General approach}

Having described what might be the canonical example, the LASSO with a parametric
Gaussian model for inference, we now lay out our general approach.

We consider a randomized version of the optimization problem in \eqref{eq:inference:problem} formulated as
\begin{equation}
\label{eq:optimization:problem:rand}
\hat{\beta}(S,\omega) = \argmin_{\beta \in \real^p} \ell(\beta;S) + {\cal P}(\beta) -\omega^T\beta + \frac{\epsilon}{2} \|\beta\|^2_2
\end{equation}
where $\ell$ is some smooth loss involving the data, ${\cal P}$ is some
structure inducing convex function, $\epsilon > 0$ is some small parameter that is sometimes necessary in order to assure the program has a solution
and $\omega \sim G$ is a randomization chosen by the data analyst. 
Our main goal is to sample
\begin{equation}
\label{eq:inference:problem:rand}
(S,\omega) | \hat{\beta}(S,\omega) \in A
\end{equation}
Our reasons for considering the problem \eqref{eq:optimization:problem:rand} rather than an unrandomized problem are described in the LASSO example above
and also hold for the optimization and sampling problems.

Namely, we expect an increase in power in the second stage following even a small randomization.
Further, as in the LASSO example above, this randomization
often allows us to cast the sampling problem as sampling
from a distribution on a space that is much simpler than if we had not 
randomized. That is, the law induced by the {\em pull-back} measure in a randomized program is often supported on a simpler region, as opposed to the push-forward measure of the non-randomized program. For inference in the non-randomized case, one might take the approach
of sending the scale of randomization to 0, though we do not pursue this here.

Another feature of the sampling problem related to \eqref{eq:optimization:problem:rand} is that the sampler
somewhat decouples the statistical model from the optimization variables. In this
sense, our main result provides ways to sample in a model-agnostic fashion: data analysts
can supply their own model ${\cal M}$, resulting in selective model ${\cal M}^*$. Of course,
for inference in ${\cal M}^*$, the analyst may have to use other techniques to reduce their problem
to sampling from a particular distribution in ${\cal M}^*$. In the example above, standard exponential
family techniques were used to eliminate nuisance parameters $\beta_{\bar{E} \setminus j|\bar{E}}(F)$ and
the problem was reduced to sampling from only one distribution, constructed
by conditioning a distribution in ${\cal M}^*$ on the sufficient statistic corresponding to the nuisance parameters.

By decoupling, we do not mean statistical independence in any sense. We mean that
each distribution in the corresponding selective model is supported on
$\Omega \times {\cal C}$ where $\Omega$ is the original
probability space for our data $S$ and ${\cal C}$ is a set of optimization variables related to the
structure inducing function ${\cal P}$. Formally, we should note that in the pull-back variables, the selective model
describes the distribution of tuples $(s,\beta,z)$ rather than the original probability space $\Omega \times \real^p$.
Hence, our sampler produces tuples $(s,\beta,z)$ rather than pairs $(s,\omega)$ though $\omega$ can always be reconstructed
via the map $\omega=\nabla \ell(\beta;s) + z + \epsilon \cdot \beta$.

The two most common examples of interest in statistical learning are 
$$
{\cal P}(\beta) = h_K(\beta) = \sup_{\nu \in K} \nu^T\beta
$$
for some convex $K \ni 0$, i.e. a seminorm. In this case, ${\cal C}$ is typically a subset of
the normal bundle of $K$ (c.f. \cite{rfg,schneider} )
$$
\left\{(\beta, z): \beta \in N_zK \right\}
$$
where $N_zK$ is the normal cone of $K$ at $z$, polar to the support cone of $K$ at $z$.
The other common example is a constraint on a seminorm, i.e.
\begin{equation}
\label{eq:seminorm}
{\cal P}(\beta) = I_{K}(\beta) = \begin{cases} 0 & \beta \in K \\
  \infty & \beta \not \in K \end{cases}
\end{equation}
where $K=\left\{b:\|b\| \leq 1\right\}$ for some seminorm $\|\cdot\|$.
In this case ${\cal C}$ is again typically a subset of $N(K)$ of the form
\begin{equation}
\label{eq:constraint}
\left\{(\beta,z): z \in N_{\beta}K \right\}.
\end{equation}

Such sets arise naturally from the KKT conditions of 
\eqref{eq:optimization:problem:rand}:
$$
\omega = \nabla \ell(\beta;S) + z + \epsilon \cdot \beta.
$$
Queries related to active sets of variables in the case of the LASSO or group LASSO or the rank of matrices in the
case of the nuclear norm correspond to smooth subsets of the corresponding normal bundles. When the structure
inducing penalty is polyhedral in nature, the sets ${\cal C}$ are typically polyhedral. For norms
with curved unit balls this is no longer the case, and curvature comes in to play. We treat the group LASSO
as a canonical example of this in Section \ref{sec:curvature}.

\subsection{Related work}
Most of the theoretical work on high-dimensional data focuses on 
consistency, either the consistency of solutions \cite{negahban_unified_2010,
van2008high} or the consistency of the models \cite{wainwright2009sharp, 
lasso_consistency}.

In the post selection literature, \cite{posi} proposed the PoSI approach,
which reduce the problem to a simultaneous inference problem. Because of
the simultaneity, it prevents data snooping from any selection procedure,
but also results in more conservative inference. In addition, the PoSI
method has extremely high computational cost, and is only applicable when
the dimension $p < 30$ or for very sparse models. 
The authors \cite{multi_split} proposed a method for computing
p-values that controls false discovery rate (FDR) among all variables. 
The knockoff filter of \cite{knockoffs} provides similar control of FDR for all variables
in the full model.
What distinguishes the conditional approach from these simulataneous approaches
is that the hypotheses tested, or parameters for which intervals are formed, in selective inference are chosen as a function
of the data. Hence, the methods of inference
are not always directly comparable.


\subsection{Outline of paper}

We propose the main Theorem \ref{thm:change:measure} of this paper in Section \ref{sec:randomized:convex:programs}, putting forth the sampling density conditional on a selection event. The highlight of this theorem is the reparametrization map that allows to reconstruct the randomization as a function of optimization variables and data and allows us to compute
the required conditional density explicitly, up to a normalizing constant. 
This is followed by a variety of examples of convex optimization programs. The support of the selective density in the first set of examples- LASSO and variants in Section \ref{sec:lasso:variants}, graphical models in Section \ref{sec:graphical:models}, forward stepwise in Section \ref{sec:kac:rice:poly} can be described by polyhedral geometry. We follow this up with a section describing more complex problems where the Jacobian involves a curvature component, the group lasso illustrated as the prototypical example. We extend the selective sampler to the case in which the data analyst
considers multiple views, or queries, of the data in Section \ref{sec:multiple:views}.
A selective version of Fisher's exact test in which the data analyst chooses the sufficient statistics based on the data is described in Section \ref{sec:goodness:of:fit}.    Finally, we advocate the projected Langevin  sampling technique \cite{bubeck_langevin} in Section \ref{sec:implementation} to sample from a log-concave selective density as described in earlier examples. Each update in such an implementation involves a projection onto a set of constraints induced by the selection event. The computational cost of each step is often minimal due to a much simpler constraint region using our reparametrization.

\section{Inverting the optimization map}

In this section, we consider the general problem of constructing
an explicit inverse to the solution of a convex problem. In this section,
we focus on convex problems without additional randomization, deferring
randomization to Section \ref{sec:randomized:convex:programs}.

Consider a statistical learning problem of the form
\begin{equation}
\label{eq:canonical:program0}
\minimize_{\beta \in \real^p} \ell(\beta;S) + {\cal P}(\beta), \qquad S \sim F,
\end{equation}
where $F$ is some
distribution in some model ${\cal M}$ and
${\cal P}$ is some structure inducing convex function, typically of the form
\eqref{eq:seminorm} or \eqref{eq:constraint}.
Cone constraints are also easily handled.

The subgradient equation for such a problem at a solution $\hat{\beta}(S)$ reads
\begin{equation}
\label{eq:canonical:subgrad}
0 \in \hat{\alpha}(S) + \hat{z}(S)
\end{equation}
with
\begin{equation}
\begin{aligned}
(S, \hat{\beta}(S), \hat{\alpha}(S), \hat{z}(S)) \in \solutionset^F(\ell, {\cal P}),
\end{aligned}
\end{equation}
where
\begin{equation}
\label{eq:canonical:set}
\begin{aligned}
\solutionset^F(\ell, {\cal P}) &\overset{\text{def}}{=} \biggl \{(s, \beta, \alpha, z): \\
& \qquad s \in \text{supp}(F), \\
& \qquad \ell(\beta;s) < \infty, \\
& \qquad \alpha \in \partial \ell(\beta; s),\\
& \qquad {\cal P}(\beta) < \infty, \\
& \qquad z \in \partial {\cal P}(\beta) \biggr\}.
\end{aligned}
\end{equation}
Above, and throughout, $\partial$ and $\nabla$ will denote subdifferentials
and derivatives with respect to $\beta$ unless otherwise noted.

The set $\solutionset^F(\ell, {\cal P})$ can be described by the 
``base space'' $\text{supp}(F)$ and ``fibers''
$$
\left\{(\beta,\alpha,z): \ell(\beta;s) < \infty, \alpha \in \partial \ell(\beta;s), \mathcal{P}(\beta)<\infty, z \in \partial {\cal P}(\beta) \right\}.
$$

We call the map
\begin{equation}
\label{eq:opt:map}
s \overset{\hat{\theta}}{\mapsto} (s, \hat{\beta}(s), \hat{\alpha}(s), \hat{z}(s)) \in \solutionset^F(\ell, {\cal P})
\end{equation}
the {\em optimization map}. 
In practice, given data $S$,
 a computer solves the problem, i.e.~produces a point in 
$\hat{\theta}(S) \subset \solutionset^F(\ell, {\cal P})$. Specifically,
a solver produces a point in 
the range of the optimization map:
\begin{equation}
\solutionset_{0}^F(\ell,{\cal P}) = \left\{(s,\beta,\alpha,z) \in \solutionset^F(\ell,{\cal P}): \alpha+z=0 \right\}.
\end{equation}
Formally speaking, the program \eqref{eq:canonical:program0}
may have no solutions. On this set
$\hat{\theta}(s) = \emptyset$.
In all of our examples  except the dual problem considered in Section \ref{sec:qp}, we will assume enough so that our convex programs 
have unique solutions, when they have any.

The selection events \cite{exact_lasso,optimal_inference} we are most interested in are typically of the form
\begin{equation}
\label{eq:canonical:event}
\solutionset^F_{\cal B}(\ell, {\cal P}) = \left\{(s,\beta,\alpha,z) \in \solutionset^F_0(\ell, {\cal P}): (\beta,\alpha,z) \in {\cal B}(s) \right\}
\end{equation}
for some nice set-valued function ${\cal B}(s)$ which could be specified
by the zero-set of a function  $h^{\cal B}$:
$$
(\beta,\alpha,z) \in {\cal B}(s) \iff h^{\cal B}(s,\beta,\alpha,z)=0.
$$
In all examples
below except basis pursuit in Section \ref{sec:basis:pursuit}, ${\cal B}(s)={\cal B}$ does not depend on 
$s$. 
This set also has the form of a bundle with base space $\text{supp}(F)$
and fibers
$$
\left\{(\beta,\alpha,z) \in {\cal B}(s): \alpha + z = 0 \right\}.
$$

We call $\solutionset^F_{\cal B}(\ell, {\cal P})$
{\em parametrizable} 
if there exists a measurable
parametrization
$\psi$ defined on some domain
$D$ with range $\solutionset^F_{\cal B}(\ell, {\cal P})$.
The map $\psi$ is typically constructed
to be an inverse of the optimization map $\hat{\theta}$ on 
$\solutionset^F_{\cal B}(\ell, {\cal P})$. 

Given a parameterization, we will typically
construct a change of measure using $\phi= \pi_{\cal B} \circ \psi $ to simplify sampling from
$\left\{s: \pi_{\cal B}^{-1}(s) \neq \emptyset \right\}$
by sampling from $\solutionset^F_{\cal B}(\ell, {\cal P})$ itself,
where $\pi_{\cal B}$ is the projection onto the base
of $\solutionset^F_{\cal B}(\ell, {\cal P})$.
Our construction is similar to what \cite{zhou_montecarlo_lasso} called estimator augmentation.
By construction, then, $\pi_{\cal B}(s,\beta,\alpha,z)$ is such that
$$\hat{\theta} \circ \pi_{\cal B} = \text{id}_{|\solutionset^F_{\cal B}(\ell, {\cal P})}$$
with $\text{id}$ the identity map. 

Transforming the probability space and constructing new data vectors that solve \eqref{eq:canonical:program0} as functions of optimization problems is key to our approach of sampling. 
The explicit parametrization changes with each problem. Typically, in selective inference examples, we condition on some function of 
$\hat{\beta}$, perhaps its support $E$ and possibly the signs $z_E$ of the non-zero coefficients. 
More generally, might condition on something besides $(E,z_E)$ which we might denote by $q$.
The selection event, i.e. the quantity we condition on determines the set of constraints $\mathcal{B}_q$.
We shall denote the parametrization map in our problems as $\psi_{q}$, so often we will write $\psi_{(E,z_E)}$. The map $\psi_{q}$ always produces a point $(s,\beta,\alpha,z)\in \solutionset^F_{{\cal B}_q}(\ell, {\cal P})$, while the domain of this parametrization can vary with the problem at hand, that is 
$$\psi_{q}: D_{q} \to \solutionset^F_{{\cal B}_q}(\ell, {\cal P}),$$
for a problem specific domain $D_{q}$.


\subsection{Inverting the MLE}
\label{sec:mle}

We now begin to describe our approach to inverting
the optimization map.
Our first example is classical: the density of the MLE of 
the natural parameters of an exponential family. The formula
is not new, going back at least to Fisher \citep{barndorff_nielsen,efron_hinkley}. Nevertheless, it serves to illustrate the general approach
we take for the general cases later.

Our loss function is
$$
\ell(\beta;S) = \Lambda(\beta) - \beta^TS
$$
where 
$$
e^{\Lambda(\beta)} = \Ee_{F_0}[e^{\beta^TS}]
$$
is the moment generating function in the exponential family with reference measure $F_0$ and sufficient statistic $S$ with
$F_{\beta}$  the law of $S$ above and in what follows. We also 
assume that $F_0$ has a density $f_0$ with respect to Lebesgue measure.

As we are computing
the MLE, our penalty function is ${\cal P}(\beta) \equiv 0$.
The convex program we solve is
\begin{equation}
\label{eq:mle:problem}
\minimize_{\beta \in \real^p} \Lambda(\beta) - \beta^TS.
\end{equation}
The KKT conditions or subgradient equation here is just the usual score equation
$$
\nabla \Lambda(\hat{\beta}(S)) = S
$$
where
$$
\nabla \Lambda(\beta) = \int_{\real^p} s f_{\beta}(ds).
$$

Our parameterization of $\mathcal{S}_0^F(\ell,\mathcal{P}=0)$ is
$$
\psi(\beta) = (\nabla \Lambda(\beta), \beta, 0, 0)
$$
which can be interpreted as reconstructing $S$ given $\beta$.

Standard multivariate calculus then tells us that the density
of $\hat{\beta}(S)$ under the distribution $F_{\beta_0}$ is
\begin{equation}
\label{eq:mle:density}
e^{\nabla \Lambda(\beta)^T\beta_0 - \Lambda(\beta_0)} 
\det(\nabla^2 \Lambda(\beta)) f_0(\nabla \Lambda(\beta)).
\end{equation}
where $\nabla^2 \Lambda(\beta)$ is the observed information
\citep{efron_hinkley}.

If $\beta_0$ is the true parameter, then, at the cost of changing the reference measure by a factor of
$\exp(s^T\hat{\beta}(s) - \Lambda(\hat{\beta}(s)) \det(\nabla^2 \Lambda(\beta))^{1/2}$ yielding
a new Lebesgue density $h_0$,
 we can
rewrite this
as 
\begin{equation}
\label{eq:mle:density:saddlepoint}
e^{\Lambda(\beta) - \Lambda(\beta_0) + \nabla \Lambda(\beta)^T(\beta_0 - \beta) } \det(\nabla^2 \Lambda(\beta))^{1/2} h_0(\nabla \Lambda(\beta)).
\end{equation}
As pointed out in \citep{barndorff_nielsen} ignoring the term $h_0(\nabla \Lambda(\beta))$ (which
is the constant 1 in the Gaussian case) yields the usual
saddle-point approximation to the density of the MLE, up to the constant
of integration. The exponential above can be rewritten as
$$
-\frac{1}{2}(\beta-\beta_0)^T \nabla^2 \Lambda(\beta) (\beta-\beta_0) + R(\beta;\beta_0).
$$
Note that even ignoring the remainder, this is quadratic in $\beta_0$ the 
parameter, and not $\beta$ the variable of integration in the density.

In principle,
nothing above really relies on the exponential family structure for the model, 
though it does rely somewhat on the fact that the loss
we use came from an exponential family. It relies on this
in that we use the form of the loss to reconstruct data $S$ from
optimization variables $\beta$. This is similar to what we see 
in the LASSO example below.

Nevertheless, the same argument above shows that if we solve the
program \eqref{eq:mle:problem} then, so long
as $S \sim F$ has a Lebesgue density the
density of $\hat{\beta}(S)$ is
\begin{equation}
\label{eq:mle:misspec}
f(\nabla \Lambda(\beta)) \cdot \left|\det(\nabla^2 \Lambda(\beta))\right|.
\end{equation}
In this sense, the above display provides an exact recipe to 
compute the density of the MLE under model misspecification. This
is somewhat similar to the general approach taken in \cite{exact_density_mle}, though we are considering this only in a very restricted setting.


\subsection{Pull-Back of the LASSO with fixed design matrix}
\label{sec:gaussian:lasso}

As a second example of a pull-back, we look at the canonical example in the class of regularized convex optimization problems: the LASSO \cite{tibs_lasso}.
The LASSO program is defined for each $(X, y, \lambda) \in \real^{n \times p} \times \real^n \times (0,\infty)$ as
\begin{equation}
\label{eq:lasso:program}
\minimize_{\beta \in \real^p} \frac{1}{2} \|y - X\beta\|^2_2 + \lambda \|\beta\|_1.
\end{equation}

In this example, $X$ is considered fixed and $F={\cal L}(P_Cy|X)$ where 
$P_C$ is projection onto $\text{col}(X)$. The law
$F$ is supported on $\text{col}(X)$ because the optimization map depends only on
$P_{C}y$. In our general notation, we can take
\begin{equation}
\label{eq:canonical:lasso}
\begin{aligned}
S &= P_Cy \\
\ell(\beta;y) &= \frac{1}{2} \|y-X\beta\|^2_2 \\
{\cal P}(\beta) &= \lambda \|\beta\|_1 \\
\solutionset^F(\ell, {\cal P}) =
\Big\{(& y,\beta,\alpha,z): P_Cy=y, \beta \in \real^p, \\
&\alpha=X^TX\beta - X^Ty, z \in \partial (\lambda \| \cdot \|_1)(\beta) \Big\}.
\end{aligned}
\end{equation}

The familiar subgradient equations of the LASSO are
\begin{equation}
\label{eq:lasso:KKT}
-\hat{\alpha}(y) = X^T(y-X\hat{\beta}(y)) = \hat{z}(y), \qquad \hat{z}(y) \in \partial (\lambda \| \cdot \|_1)(\hat\beta (y)).
\end{equation}
Note also that \eqref{eq:lasso:KKT} contains the implicit constraint $\hat{z}(y) \in \text{row}(X)$ where $\text{row}(X)$ 
is the rowspace of $X$.
This can be seen from the structure of 
$$
\begin{aligned}
\solutionset^F_0(\ell,{\cal P}) &= \biggl\{(y,\beta,\alpha,z) \in \solutionset^F(\ell, {\cal P}): X^T(X\beta-y) + z = 0 \biggr\}
\end{aligned}
$$
as each point in $\solutionset^F_0(\ell, {\cal P})$ has $z \in \text{row}(X)$.

In \cite{exact_lasso}, the authors assume general position so the
map $\hat{\theta}$ is single valued \cite{uniqueness_lasso}. The
authors
then considered the active set and signs of the LASSO solution,
conditioning on their value $(E,z_E)$ and the design $X$.
This is the event
\begin{equation}
\label{eq:lasso:event}
\begin{aligned}
\left\{y : (y, \hat{\beta}(y), \hat{\alpha}(y), \hat{z}(y))\in\mathcal{S}_0^F(\ell, \mathcal{P}),  \text{diag}(z_E)\hat{\beta}_E(y)>0, \hat{\beta}_{-E}(y) = 0 \right \}.
\end{aligned}
\end{equation}
Note that this event is equivalent to 
$$
\begin{aligned}
\hat{\theta}(y) \in \left\{(y,\beta,\alpha,z) \in \solutionset^F_0(\ell, {\cal P}): \text{diag}(z_E)\beta_E > 0, \beta_{-E} = 0 \right \}. 
\end{aligned}
$$

\noindent This is our canonical example of a parameterizable set where
$$
{\cal B} = {\cal B}_{(E,z_E)} = \left\{(\beta,\alpha,z): \text{diag}(z_E)\beta_E > 0, \beta_{-E} = 0
, z_E = \lambda\ \textrm{sign}(\beta_E), \|z_{-E}\|_{\infty} \leq \lambda \right \}.
$$
The above constraints on $\beta$ induce further restrictions on the range of the solver
\begin{equation*}
\begin{aligned}
	\mathcal{S}_{\mathcal{B}_{(E,z_E)}}^F(\ell,\mathcal{P}) 
	= \{ (& s,\beta,\alpha,z)  \in \mathcal{S}_0^F(\ell, \mathcal{P}) : \beta\in\mathcal{B}_{(E, z_E)}\} \\
	= \{ (& s, \beta, \alpha, z) : \text{diag}(z_E)\beta_E > 0, \beta_{-E} = 0,
	\\ &  \alpha=-X^T(s-X\beta)=-z, z_{E}=\lambda\ \textrm{sign}(\beta_E), \|z_{-E}\|_{\infty}<\lambda \}.
\end{aligned}
\end{equation*}

The authors in \cite{exact_lasso} then carry out selective inference for linear functionals $\eta^T\mu$ in the {\em 
saturated model}
\begin{equation}
\label{eq:param:lasso:model}
{\cal M} = \left\{N(\mu, \sigma^2): \mu \in \real^n\right\}
\end{equation}
with $\sigma^2 > 0$ considered known. In this context, selective inference corresponds to taking each
$F \in {\cal M}$ and conditioning it on the event \eqref{eq:lasso:event} which can be rewritten as
$$
\pi_S \left(\mathcal{S}_{\mathcal{B}_{(E,z_E)}}^F(\ell,\mathcal{P})\right)
$$
where $\pi_S$ denotes projection onto the data coordinate. The resulting {\em selective model}
$$
{\cal M}^* = \left\{F^*: \frac{dF^*}{dF}(y) \propto \begin{cases} 1 & y \in \pi_S \left(\mathcal{S}_{\mathcal{B}_{(E,z_E)}}^F(\ell,\mathcal{P})\right) \\ 0 & \text{otherwise.} \end{cases} \right\}
$$
is an exponential family and sampling is generally not necessary in the saturated model as valid inference
typically requires conditioning on sufficient statistic related to nuisance parameters as described in 
\cite{exact_lasso,optimal_inference}.

Nevertheless, if sampling were necessary, a naive accept reject sampling scheme
for inference in this setting
draws vectors $y$ on $\real^n$ according to $N(\mu,\sigma^2)$ solves
the LASSO with the triple $(y,X,\lambda)$ and checks whether the result lies in
\eqref{eq:lasso:event}. Due to the nature of the LASSO, this check 
can be reduced to verifying whether $y$ satisfies a set of affine inequalities \cite{exact_lasso}.

\subsubsection{Parametrization and pull-back of the LASSO}

What if we did not have to check these affine inequalities
in our sampling scheme? This is the essence of what we propose in this work. We will
ultimately see that our approach is essentially
equivalent to that of \cite{exact_lasso} but 
the probability space of our sampler is different. The approach we take
is similar to \citep{zhou_montecarlo_lasso}.
We will see later
that, after randomization, the parameter space is generically simpler
than if we had not randomized.

Our first example of a parameterization is
\begin{equation}
\label{eq:embed:map}
\psi_{(E,z_E)}(\beta, z) = (X\beta + (X^T)^{\dagger} z, \beta, -z, z)
\end{equation}
with domain
\begin{equation}
\label{eq:embed:domain}
\begin{aligned}
D_{(E,z_E)}  = \{(& \beta, z): \text{diag}(z_E)\beta_E >0, \beta_{-E}=0, \\
 & z \in \text{row}(X), z_{E}=\lambda\ \textrm{sign}(\beta_E), \|z_{-E}\|_{\infty} \leq \lambda  \}
\end{aligned}
\end{equation}
and corresponding data reconstruction map $\phi_{(E,z_E)}:D_{(E,z_E)} \rightarrow \text{col}(X)$
defined by
$$
\phi_{(E,z_E)}(\beta, z) = X\beta + (X^T)^{\dagger} z.
$$

We see that $\psi_{(E,z_E)}$ is indeed a parameterization
of $\solutionset_0^F(\ell, {\cal P})$. Hence, on the range of 
$\phi_{(E,z_E)}$ we do not actually have to check the affine inequalities
of \citep{exact_lasso} as they are satisfied by construction.

The set $D_{(E,z_E)}$ is a subset of a $2p$-dimensional set but is in fact of dimension $\text{rank}(X)$ whenever $|E| < \text{rank}(X)$. Hence,
it has Lebesgue measure $0$ in $\real^{2p}$ but it is a subset of an affine space so that it inherits a Hausdorff measure ${\cal H}_{\text{rank}(X)}$.

In a formal sense, the map $\phi_{(E,z_E)}$ is the inverse of the optimization map.
\begin{lemma}
\label{lem:pullback:chart}
On the set of $y$ where that the
solution to \eqref{eq:lasso:program} is unique ($X$ being considered
fixed), the following equality holds
\begin{equation}
\hat{\theta}_{-(y, \alpha)} \circ \phi_{(E,z_E)} (\beta, z) = (\beta, z)\;\; \textnormal{  for }(\beta, z) \in D_{(E,z_E)},
\end{equation}
where $\hat{\theta}_{-(y,\alpha)}(y) = (\hat{\beta}(y), \hat{z}(y))$.
Hence, $\hat{\theta}_{-(y,\alpha)} \circ \phi_{(E,z_E)}$ is equivalent to the 
identity map on $D_{(E,z_E)}$.

More generally, the set \eqref{eq:lasso:event} is equal to 
$\phi_{(E, z_E)}(D_{(E,z_E)}).$
\end{lemma}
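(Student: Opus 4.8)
The plan is to verify the two asserted identities by direct computation from the KKT/subgradient equations, exploiting the fact that $\phi_{(E,z_E)}$ is built precisely to invert them. First I would take an arbitrary $(\beta,z)\in D_{(E,z_E)}$ and set $y=\phi_{(E,z_E)}(\beta,z)=X\beta+(X^T)^{\dagger}z$. I must check that $(\beta,z)$ is the (unique) optimal pair for the LASSO at this $y$, i.e.\ that $(y,\beta,-z,z)\in\solutionset^F_0(\ell,{\cal P})$ and that $\beta$ lies in the quadrant ${\cal B}_{(E,z_E)}$. The membership in $\solutionset^F(\ell,{\cal P})$ is immediate: $\alpha=-z$ by the definition of $\psi_{(E,z_E)}$, and $z\in\partial(\lambda\|\cdot\|_1)(\beta)$ because the domain $D_{(E,z_E)}$ imposes exactly $z_E=\lambda\,\sgn(\beta_E)$ and $\|z_{-E}\|_\infty\le\lambda$. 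The stationarity equation $\alpha+z=0$ then needs $X^T(X\beta-y)=-z$, and substituting $y=X\beta+(X^T)^\dagger z$ gives $X^T(X\beta-y)=-X^T(X^T)^\dagger z=-z$, where the last equality uses $z\in\text{row}(X)=\text{col}(X^T)$, on which $X^T(X^T)^\dagger$ acts as the identity. So $(y,\beta,-z,z)\in\solutionset^F_0(\ell,{\cal P})$, and since $D_{(E,z_E)}$ also encodes $\text{diag}(z_E)\beta_E>0$ and $\beta_{-E}=0$, the pair lies in $\solutionset^F_{{\cal B}_{(E,z_E)}}(\ell,{\cal P})$.

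Next I would invoke uniqueness: by hypothesis we restrict to the set of $y$ where the LASSO solution is unique (this holds under general position, cf.\ \cite{uniqueness_lasso}), and one should check that $\phi_{(E,z_E)}(D_{(E,z_E)})$ lands inside that set — in fact on this set the subgradient $\hat z(y)$ is also uniquely determined once the solution $\hat\beta(y)$ is, so $\hat\theta_{-(y,\alpha)}(y)=(\hat\beta(y),\hat z(y))$ is well-defined and single-valued. Having shown that $(\beta,z)$ solves the KKT system at $y=\phi_{(E,z_E)}(\beta,z)$, uniqueness forces $(\hat\beta(y),\hat z(y))=(\beta,z)$, which is exactly the claimed identity $\hat\theta_{-(y,\alpha)}\circ\phi_{(E,z_E)}(\beta,z)=(\beta,z)$ on $D_{(E,z_E)}$.

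For the second statement, that the selection event \eqref{eq:lasso:event} equals $\phi_{(E,z_E)}(D_{(E,z_E)})$, I would argue both inclusions. The inclusion $\phi_{(E,z_E)}(D_{(E,z_E)})\subseteq\eqref{eq:lasso:event}$ is what the first part already gives: for such $y$ the unique solution has support and signs $(E,z_E)$. For the reverse, take $y$ in \eqref{eq:lasso:event}; then $(\hat\beta(y),\hat z(y))$ satisfies the KKT equations, has the prescribed active set and signs, and $\hat z(y)\in\text{row}(X)$ (the implicit constraint noted after \eqref{eq:lasso:KKT}, since $\hat z(y)=X^T(y-X\hat\beta(y))$), so $(\hat\beta(y),\hat z(y))\in D_{(E,z_E)}$; and applying $\phi_{(E,z_E)}$ recovers $y$ because $\phi_{(E,z_E)}(\hat\beta(y),\hat z(y))=X\hat\beta(y)+(X^T)^\dagger X^T(y-X\hat\beta(y))=X\hat\beta(y)+P_C(y-X\hat\beta(y))=P_Cy=y$, using $P_C=(X^T)^\dagger X^T$ on $\text{col}(X)$ and the standing assumption $y\in\text{col}(X)=\text{supp}(F)$. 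Hence $y\in\phi_{(E,z_E)}(D_{(E,z_E)})$.

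The main obstacle, modest as it is, is the careful handling of the pseudoinverse: one must be precise that $(X^T)^\dagger$ is applied only to vectors in $\text{row}(X)$, so that $X^T(X^T)^\dagger$ and $(X^T)^\dagger X^T$ behave as the relevant orthogonal projections, and one must confirm that the image $\phi_{(E,z_E)}(D_{(E,z_E)})$ stays within the uniqueness locus so that $\hat\theta_{-(y,\alpha)}$ is genuinely a function there. Everything else is bookkeeping with the KKT conditions.
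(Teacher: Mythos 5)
Your proof is correct and takes the same route as the paper, whose entire proof is the one-line remark that the result follows by direct verification of the KKT conditions \eqref{eq:lasso:KKT}; you have simply carried out that verification explicitly (including the pseudoinverse bookkeeping $X^T(X^T)^{\dagger}z=z$ for $z\in\text{row}(X)$ and $(X^T)^{\dagger}X^T=P_C$ on $\text{col}(X)$, and the appeal to uniqueness).
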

\begin{proof}
Direct verification of \eqref{eq:lasso:KKT}.
\end{proof}

Now choose an element in ${\cal M}$, i.e.~fix some $\mu \in \real^n$ and consider the $N(\mu,\sigma^2I)$ density. 
We define its {\em pull-back} to be
the law $F^*_{\mu}$  supported on $D_{(E,z_E)}$ with ${\cal H}_{\text{rank}(X)}$ density 
\begin{equation}
f^*_{\mu}(\beta, z) \propto \exp\left(-\frac{1}{2 \sigma^2} \|\phi_{(E,z_E)}(\beta, z)-\mu\|^2_2\right)
\cdot \left|J \phi_{(E,z_E)}(\beta, z)\right|,
\end{equation}
where $J\phi_{(E,z_E)}$ is the Jacobian of the projection of the
parameterization, which depends
only on $X$ in this case (and can be treated as a constant). We use the name pull-back as this
density is precisely the pull-back of the  measure with Lebesgue density $f_{\mu}$ under the reconstruction map $\phi_{(E,z_E)}$.

As each density has a pull-back, the model itself has a pull-back ${\cal M}^*$ comprising the pull-back of each density. The model
also inherits the exponential family structure of ${\cal M}$. Taking $f_0^*$ to be the reference measure, we can choose
the natural parameter to be $\mu/\sigma^2$ and sufficient statistic to be 
$\phi_{(E,z_E)}(\beta, z)$.

Finally, note that nothing about our construction so far has anything to do with the parametric model
\eqref{eq:param:lasso:model}. 
In fact, as long as $F$ has a Lebesgue density the pull-back is well-defined. 

\begin{lemma}
\label{lem:lasso:sign:conditional}
Suppose $F= {\cal L}(P_Cy|X)$ has density $f$ on $C=\text{col}(X)$
and the solution to
\eqref{eq:lasso:program} is $F$-a.s. unique. Then, the following equality holds
\begin{equation}
\begin{aligned}
&{\cal L}_{y\sim F}\left (P_Cy  \: \big\vert \:(y,\hat{\beta}(y), \hat{\alpha}(y),\hat{z}(y))\in\mathcal{S}_0^F(\ell, \mathcal{P}), \text{diag}(z_E)\hat{\beta}_E(y)>0, \hat{\beta}_{-E}(y) = 0, X \right) \\
&={\cal L}_{(\beta, z) \sim F^*}\left(\phi_{(E,z_E)}(\beta, z) | X\right),
\end{aligned}
\end{equation}
where $F^*$ is supported on $\text{relint}(D_{(E,z_E)})$ and has ${\cal H}_{\text{rank}(X)}$ density 
$$
f^*(\beta, z) \propto f \left( \phi_{(E,z_E)}(\beta, z) \right) \cdot \left|J \phi_{(E,z_E)}(\beta, z)\right|.
$$
\end{lemma}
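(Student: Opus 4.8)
The plan is to reduce the claimed identity to an application of the change-of-variables (area) formula for Hausdorff measure, using Lemma \ref{lem:pullback:chart} to identify the relevant sets.

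First I would set up the two sides carefully. On the left, by Lemma \ref{lem:pullback:chart} the conditioning event $\{y : (y,\hat\beta(y),\hat\alpha(y),\hat z(y))\in\mathcal{S}_0^F(\ell,\mathcal{P}),\ \mathrm{diag}(z_E)\hat\beta_E(y)>0,\ \hat\beta_{-E}(y)=0\}$ coincides (up to an $F$-null set, since the solution is a.s.\ unique) with $\phi_{(E,z_E)}(D_{(E,z_E)})$, or rather its relative interior, because the boundary pieces where some $|z_j|=\lambda$ for $j\notin E$ form a lower-dimensional set that is $F$-null. So the conditional law on the left is $F$ restricted and renormalized to $\phi_{(E,z_E)}(\mathrm{relint}(D_{(E,z_E)}))$, pushed through $P_C$ — but since $F$ is already supported on $C=\mathrm{col}(X)$ and $\phi_{(E,z_E)}$ maps into $C$, the $P_C$ is the identity here. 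On the right, $F^*$ is the measure on $D_{(E,z_E)}$ with $\mathcal{H}_{\mathrm{rank}(X)}$-density $f^*(\beta,z)\propto f(\phi_{(E,z_E)}(\beta,z))\,|J\phi_{(E,z_E)}(\beta,z)|$, and we push it forward by $\phi_{(E,z_E)}$.

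The core step is then to show that the pushforward of $F^*$ under $\phi_{(E,z_E)}$ equals $F$ restricted to $\phi_{(E,z_E)}(\mathrm{relint}(D_{(E,z_E)}))$. I would verify that $\phi_{(E,z_E)}$ restricted to $\mathrm{relint}(D_{(E,z_E)})$ is injective (it is the inverse of $\hat\theta_{-(y,\alpha)}$ on this set by Lemma \ref{lem:pullback:chart}, hence a bijection onto its image) and is a Lipschitz (indeed affine-on-each-sign-cell, certainly $C^1$) map between the $\mathrm{rank}(X)$-dimensional manifold $D_{(E,z_E)}$ and $\mathrm{col}(X)$. The area/coarea formula for Hausdorff measures then gives, for any bounded measurable $h$ on $\mathrm{col}(X)$,
\begin{equation*}
\int_{\phi_{(E,z_E)}(\mathrm{relint}\,D)} h(u)\, d\mathcal{H}_{\mathrm{rank}(X)}(u) = \int_{\mathrm{relint}\, D} h(\phi_{(E,z_E)}(\beta,z))\, |J\phi_{(E,z_E)}(\beta,z)|\, d\mathcal{H}_{\mathrm{rank}(X)}(\beta,z),
\end{equation*}
where $|J\phi_{(E,z_E)}|$ is the generalized (rectangular) Jacobian $\sqrt{\det(D\phi^\top D\phi)}$. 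Taking $h$ to be $f$ times an arbitrary test function and comparing with the definition of $f^*$ identifies the two laws up to the common normalizing constant; I would note $\mathcal{H}_{\mathrm{rank}(X)}$ restricted to $\mathrm{col}(X)$ is just Lebesgue measure on that subspace (up to the usual constant), so $f$ being a Lebesgue density on $C$ is exactly what is needed.

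I expect the main obstacle to be bookkeeping about measure-zero boundary sets and dimension: one must check that the ``open'' part $\mathrm{relint}(D_{(E,z_E)})$ captures all of the conditioning event up to $F$-null sets (the faces $\{|z_j|=\lambda\}$, and also the set where the LASSO solution is non-unique, must be argued to be null), and that $D_{(E,z_E)}$ genuinely sits inside an affine subspace of dimension $\mathrm{rank}(X)$ so that $\mathcal{H}_{\mathrm{rank}(X)}$ is the right reference measure and the Jacobian factor is well-defined and finite. The differential-geometric content — that $\phi_{(E,z_E)}$ is a parametrization and its pull-back of Lebesgue measure is $|J\phi_{(E,z_E)}|$ times Hausdorff measure — is then just the standard change-of-variables theorem, and the identity of the two conditional laws follows by renormalizing both sides to probability measures.
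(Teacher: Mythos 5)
Your proposal is correct and follows essentially the same route as the paper's proof, which simply invokes Lemma \ref{lem:pullback:chart}, standard multivariate calculus (the change-of-variables you spell out via the area formula on the $\text{rank}(X)$-dimensional affine set), and the observation that $F(\phi_{(E,z_E)}(D_{(E,z_E)}\setminus \text{relint}(D_{(E,z_E)})))=0$ because $F$ has a density. You have merely written out in more detail the injectivity, Hausdorff-measure, and boundary-null-set bookkeeping that the paper leaves implicit.
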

\begin{proof}
Follows from Lemma \ref{lem:pullback:chart} and standard multivariate
calculus combined with the fact that the existence of the 
density $f$ implies that
$$
F ( \phi_{(E,z_E)}(D_{(E,z_E)} \setminus \text{relint}(D_{(E,z_E)}))) = 0.
$$
\end{proof}


\begin{remark}
One of the consequences of the above is that one can
draw response vectors $y$ that have the same active set and signs
from a density with relatively simple support when $\text{row}(X) = \real^p$. 
Hence, sampling 
IID from some density on this support allows one to use importance
sampling with the above explicit density as numerator in the importance weight.

When $\text{row}(X) \subsetneq \real^p$, the support is still somewhat
complex. For instance, the subgradients $z$ must be in the intersection of a face of the 
$\ell_{\infty}$ ball which may be a difficult set to describe. 
We will see that after randomization, 
this complexity often disappears.
\end{remark}

\begin{remark}
In theory, one might want to drop the uniqueness assumption above.
If uniqueness of the solution does not hold then the parameterization
is not injective and the event we condition on should be replaced with the
event $\hat{\theta}(y) \cap \mathcal{S}_{\cal B}^F(\ell,\mathcal{P}) \neq \emptyset$.

In this case, it may still be possible to 
derive a formula for the law of $P_Cy$
by invoking the co-area formula \cite{federer,diaconis_sampling_manifold}. We do not 
pursue this generalization here.
\end{remark}

In order to carry out selective inference for a linear functional $\eta^T\mu$, the authors of  \cite{exact_lasso} conditioned
on $\mathcal{P}_{\eta}^{\perp}y$. This is also possible in the pull-back model. 
Fix a linear subspace $L \subset \text{col}(X)$ considered to be the model subspace so that
$$
{\cal M}_L = \left\{N(\mu,\sigma^2 I): \mu \in L \right\}.
$$ 
For each linear function of interest $\eta \in L$, one generally must condition on $(\mathcal{P}_L-\mathcal{P}_{\eta})y$ to eliminate the
nuisance parameter $(\mathcal{P}_L-\mathcal{P}_{\eta})\mu$.
If $w$ is the observed value of $(\mathcal{P}_L-\mathcal{P}_{\eta})y$, a straightforward modification of the argument above shows
that the appropriate distribution is supported on the $\text{rank}(X) - \text{dim}(L) + 1$ dimensional set
\begin{equation*}
\label{eq:conditional:support}
D_{(E,z_E)}(w) = \{(\beta, z) \in D_{(E,z_E)}:  (\mathcal{P}_L-\mathcal{P}_{\eta})\phi_{(E,z_E)}(\beta, z)=w\}
\end{equation*}
with ${\cal H}_{\text{rank}(X)-\text{dim}(L)+1}$ density proportional to $f_{\mu}^*$.
In this case, even if $\text{row}(X)=\real^p$, the support is somewhat complex.


\section{Inverting the optimization map of a randomized convex program} \label{sec:randomized:convex:programs}

In this section, we describe how to invert the KKT conditions for our family of randomized convex programs.
As mentioned in the introduction, randomization comes with advantages like enhanced statistical power in the inference stage, as
well as a simplification of the support of the relevant reference distribution.
In what follows, all of our convex programs have random variables appearing
linearly in the subgradient, so that these random variables
can be reconstructed from optimization variables. The random
variable is one introduced by a data analyst through  additional randomization as described in \citep{reusable_holdout,randomized_response}.

Let $G$ be a distribution on $\real^p$ and $\epsilon \geq 0$ a 
small parameter. 
Given
a canonical problem specified by $(F,\ell,{\cal P})$ we define
its randomized version
as
\begin{equation}
\label{eq:canonical:random:program}
\minimize_{\beta \in \real^p} \ell(\beta;S) + {\cal P}(\beta)  -  \omega^T\beta+ \frac{\epsilon}{2} \|\beta\|^2_2, \qquad (S,\omega) \sim F\times G.
\end{equation}

A natural question to ask
at this point is: why randomize the program in the 
above way?
One of the inspirations for selective inference after randomization, the topic 
considered in \citep{randomized_response} are techniques used in differential
privacy \citep{reusable_holdout}. The other was the empirical finding
in \citep{optimal_inference}
that holding out some data before carrying out selective inference as in 
\cite{exact_lasso} also showed an improvement in selective power. 

This increase in selective power can be attributed to the fact
that choosing a model with a randomized response has more leftover information as defined in \cite{optimal_inference} after model selection than choosing a model with the original response. 

The choice to add $\frac{\epsilon}{2} \|\beta\|^2_2 - \omega^T\beta$ to the objective is not the only
reasonable choice. The main property we want of the perturbation 
$\Delta(\beta,\omega)$ is that $\omega=\phi(\beta, v)$ where
$(\beta, v) \in \partial \Delta(\beta,\omega)$. That is, given the value of 
$\beta$ and the subgradient $v$ we can reconstruct $\omega$. 

\begin{remark}
Addition of the term $\frac{\epsilon}{2} \|\beta\|^2_2$ to the objective
ensures that the set of $(s,\omega)$ such that \eqref{eq:canonical:random:program} has a solution contains the
set
$$
\{(s,\omega): \hat{\theta}(s) \neq \emptyset, \omega \in \text{supp}(G)\}.
$$
To see this, note that solving \eqref{eq:canonical:random:program} is equivalent to computing
$$
\hat{\beta}(s,\omega) = \text{prox}_{\frac{1}{\epsilon}(\ell(\cdot;s) + {\cal P}(\cdot))}(\omega / \epsilon).
$$
Whenever $\hat{\theta}(s) \neq \emptyset$, the objective $\ell(\cdot;s)+{\cal P}(\beta)$ is a closed proper convex function, hence its proximal mapping is well-defined and $\hat{\beta}(s,\omega)$ is well-defined.

If $\ell$ is strongly convex a.s.-$F$ and ${\cal P}$ is closed and proper then we can 
take $\epsilon=0$.
\end{remark}

\subsection{Data splitting as randomization} \label{sec:data:splitting}

The suggested randomization above may strike some readers as lacking motivation.
A randomization mechanism probably familiar to most readers is 
data splitting \citep{cox}, i.e.~randomly partitioning the dataset into
two pieces of size $(n_1,n-n_1)$. We will see that solving
a convex program after data splitting is essentially
an example of \eqref{eq:canonical:random:program}.

For a specific example, suppose we fit the 
graphical LASSO \cite{glasso} based on a data matrix
$X \in \real^{n \times p}$
but we first randomly split the data. That is,
we form
$$
S_1(g) = \frac{1}{n_1} X_1(g)^T
\left(I_{n_1} - \frac{1}{n_1} 1_{n_1}1_{n_1}^T \right)X_1(g),
$$
with  $g$ denoting
the random partition of the data into two groups and $X_1(g)$ denoting the
data in the first group.

Next, we solve the program
$$
\minimize_{\Theta: \Theta^T=\Theta, \Theta > 0} - \log \det (\Theta) + \text{Tr}(S_1(g) \Theta) + \lambda {\cal P}(\Theta)
$$
with the usual caveat that the penalty does not charge diagonal elements of $\Theta$.

This program is equivalent to solving the problem
$$
\minimize_{\Theta: \Theta^T=\Theta, \Theta > 0} - \log \det (\Theta) + \text{Tr}(S \Theta) + \text{Tr}((S_1(g)-S)\Theta) + \lambda {\cal P}(\Theta),
$$
where 
$$
S = \frac{1}{n} X^T\left(I - \frac{1}{n}1_n1_n^T \right)X.
$$
This is very close to \eqref{eq:canonical:random:program} with
$\omega = \omega(g) =  S- S_1(g)$ which is orthogonal to, but not necessarily
independent of the full covariance $S$.
As described in \cite{optimal_inference}, data splitting can be interpreted as
simply conditioning on the value 
$(S_1(g),g)$ as the only variation 
in ${\cal L}(S|S_1(g),g)$ is $S_2(g)$, the second stage data.

\subsection{The selective sampler}

In this section, we describe our main tool for inference after solving
a convex program randomized in the above fashion. We call this
tool the {\em selective sampler}.

A solver for the program \eqref{eq:canonical:random:program} produces a point in
\begin{equation}
\label{eq:canonical:randomized_set}
\begin{aligned}
\bar{\solutionset}^F(G, \epsilon,\ell,{\cal P}) &\overset{\text{def}}{=} \biggl \{(s, \omega, \beta, \alpha, z): \\
& \qquad s \in \text{supp}(F), \\
& \qquad \omega \in \text{supp}(G), \\
& \qquad \ell(\beta;s) < \infty, \\
& \qquad \alpha \in \partial \ell(\beta; s),\\
& \qquad {\cal P}(\beta) < \infty, \\
& \qquad z \in \partial {\cal P}(\beta) \biggr\}.
\end{aligned}
\end{equation}
More precisely, it produces a point in 
\begin{equation}
\bar{\solutionset}_0^F(G, \epsilon,\ell,{\cal P}) = \left \{(s, \omega, \beta, \alpha, z)\in \bar{\solutionset}^F(G, \epsilon,\ell,{\cal P}): \epsilon \cdot \beta + \alpha + z - \omega = 0 \right\}.
\end{equation}

Our statistical learning task is typically to infer something about $F$ in some
model ${\cal M}$. 
As we are free to choose $G$ however we want, a 
natural choice is to choose $G$ to have a Lebesgue density supported on all
of $\real^p$. In this case, inspection of the KKT conditions or subgradient equation
of \eqref{eq:canonical:random:program} read
\begin{equation}
\omega = \hat{\alpha}(s,\omega) + \hat{z}(s,\omega) + \epsilon \cdot \hat{\beta}(s,\omega).
\end{equation}
The subgradient equation determines a canonical
map
\begin{equation}
\label{eq:canonical:map}
\psi : \solutionset^F(\ell, {\cal P}) \rightarrow \bar{\solutionset}^F_0(G, \epsilon,\ell, {\cal P})
\end{equation}
defined naturally as
\begin{equation}
\psi(s,\beta,\alpha,z) = (s,  \alpha + z + \epsilon \cdot \beta,\beta,\alpha,z).
\end{equation}
As in the non-randomized case, we are typically interested in some selection events of the form 
\begin{equation*}
\bar{\mathcal{S}}_{\mathcal{B}}^F(G,\epsilon,\ell,\mathcal{P}) = \{(s,\omega,\beta,\alpha,z)\in \bar{\mathcal{S}}_0^F(G,\epsilon,\ell,\mathcal{P}): (\beta,\alpha,z)\in\mathcal{B}(s) \}.
\end{equation*}
The corresponding randomization reconstruction map is then
$$
\phi(\beta,\alpha,z)=\alpha + z + \epsilon \cdot \beta ,
$$
defined on a domain  
$$
\{(\beta, \alpha,z)\in {\cal B}(S):  \alpha + z + \epsilon \cdot \beta \in \text{supp}(G) \},
$$
for a suitable ${\cal B}(S)$, defined by the selection event.
It turns out that many selection events of interest are such that the restriction of $\psi$ to 
these events have a simple structure which allows for straightforward
description of the selective model.
The canonical example of conditioning on the set of active variables 
and signs of the LASSO as in \citep{exact_lasso} was described in Section \ref{sec:canonical:example}.


\begin{theorem}[Selective sampler]
\label{thm:change:measure}
Suppose that $\omega$ is independent of $S$ with distribution $G$ such that $\text{supp}(G) \subset \real^p$ has non-empty interior with Lebesgue density $g$ on $\text{supp}(G)$.
Then, the map
$$
\psi(s,\beta,\alpha,z) = (s, \epsilon \cdot \beta + \alpha + z,\beta,\alpha,z)=(s,\phi(\beta, \alpha,z),\beta,\alpha,z)
$$
restricted to 
$$
D_G = \left\{(s,\beta,\alpha,z) \in \solutionset^F(\ell,{\cal P}): 
\epsilon \cdot \beta + \alpha + z \in \text{supp}(G) \right\}
$$
is onto $\bar{\solutionset}^F_0(G, \epsilon,\ell,{\cal P})$.
Further, the law
\begin{equation}
\label{eq:change:measure}
\begin{aligned}
&{\cal L}_{F \times G}((S,\omega) | (S,\omega,\hat{\beta}(S,\omega), \hat{\alpha}(S,\omega), 
\hat{z}(S,\omega)) \in \bar{\mathcal{S}}_{\mathcal{B}}^F(G,\epsilon,\ell,\mathcal{P})) \\
&= {\cal L}((S, \epsilon \beta +\alpha+z) | (s,\beta, \alpha, z) \in D_G, (\beta,\alpha,z)\in {\cal B}(S))
\end{aligned}
\end{equation}
for suitable ${\cal B}(S)$
and $(S,\beta,\alpha,z)$ has density proportional to
\begin{equation}
\begin{aligned}
\label{eq:change:measure:density}
& f(s) \cdot g(\epsilon \cdot \beta +\alpha + z) \cdot \left|J\psi(s,\beta,\alpha,z)\right| \cdot 1_{D_G}(s,\beta,\alpha,z) \cdot 1_{\mathcal{B}(s)}(\beta,\alpha,z) \\
&=f(s) \cdot g(\phi(\beta,\alpha,z))\cdot  \left|\det(D_{(\beta,\alpha,z)} \phi)\right| \cdot 1_{D_G}(s,\beta,\alpha,z) \cdot 1_{\mathcal{B}(s)}(\beta,\alpha,z) 
\end{aligned}
\end{equation}
with the Jacobian denoting the derivative of the map $\psi$ with respect
to $(\beta,\alpha,z)$ on the fiber over $s$.
\end{theorem}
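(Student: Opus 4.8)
The plan is to recognize the claimed identity \eqref{eq:change:measure} as a change-of-variables statement between two parametrizations of the same selection event, and \eqref{eq:change:measure:density} as the resulting Jacobian formula. First I would establish the surjectivity claim: $\psi$ restricted to $D_G$ is onto $\bar{\solutionset}^F_0(G,\epsilon,\ell,{\cal P})$. This is essentially bookkeeping from the KKT condition $\epsilon\beta+\alpha+z-\omega=0$ — given any point $(s,\omega,\beta,\alpha,z)$ in $\bar{\solutionset}^F_0$, the tuple $(s,\beta,\alpha,z)$ lies in $\solutionset^F(\ell,{\cal P})$ by definition, and $\epsilon\beta+\alpha+z=\omega\in\text{supp}(G)$, so it lies in $D_G$ and maps to the given point. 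Injectivity on the fiber over $s$ follows because $\psi$ fixes $(s,\beta,\alpha,z)$ and only appends the deterministic coordinate $\phi(\beta,\alpha,z)$; so $\psi$ is a bijection between $D_G$ and $\bar{\solutionset}^F_0(G,\epsilon,\ell,{\cal P})$, and it restricts to a bijection between $\{(s,\beta,\alpha,z)\in D_G:(\beta,\alpha,z)\in{\cal B}(s)\}$ and $\bar{\mathcal{S}}_{\mathcal{B}}^F(G,\epsilon,\ell,\mathcal{P})$.

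Next I would set up the two measures. Under $F\times G$, the pair $(S,\omega)$ has density $f(s)g(\omega)$ with respect to the product of the base measure on $\text{supp}(F)$ and Lebesgue measure on $\text{supp}(G)$. Restricting to the selection event $\bar{\mathcal{S}}_{\mathcal{B}}^F$ means intersecting with the graph-like set $\{\omega=\phi(\beta,\alpha,z),\ (\beta,\alpha,z)\in{\cal B}(s)\cap(\text{fiber over }s)\}$, which — since the fiber coordinates $(\beta,\alpha,z)$ over a fixed $s$ live on a manifold and $\omega$ is then pinned to them via $\phi$ — is a submanifold of $\text{supp}(F)\times\real^p\times\real^p\times\real^p\times\real^p$ parametrized by $\psi$. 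Pulling the conditional law of $(S,\omega)$ back through $\psi$, the area/coarea formula (equivalently, the smooth change-of-variables formula on manifolds, as in the proof of Lemma \ref{lem:lasso:sign:conditional}) produces the Jacobian factor $|J\psi(s,\beta,\alpha,z)|$, where the Jacobian is taken with respect to the fiber coordinates $(\beta,\alpha,z)$ since $s$ is untouched. Because $\psi$ acts as the identity on $s$ and as $(\beta,\alpha,z)\mapsto(\phi(\beta,\alpha,z),\beta,\alpha,z)$ on the fiber, its Jacobian determinant reduces to $|\det(D_{(\beta,\alpha,z)}\phi)|$ restricted to the tangent space of the fiber, giving the second line of \eqref{eq:change:measure:density}. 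The indicator factors $1_{D_G}$ and $1_{\mathcal{B}(s)}$ simply record the domain of the parametrization.

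The main obstacle is making the manifold/measure-theoretic bookkeeping precise: the fibers of $\solutionset^F(\ell,{\cal P})$ need not be full-dimensional (exactly the phenomenon flagged after \eqref{eq:embed:domain}, where $D_{(E,z_E)}$ sits in an affine space of dimension $\text{rank}(X)$), so "density" must be interpreted against the appropriate Hausdorff measure on whatever carrier manifold $\psi$ parametrizes, and one must check that the boundary strata — where $\psi$ fails to be a diffeomorphism, or where ${\cal B}(s)$ is only a closed condition rather than an open one — are null, so they can be discarded. This is handled exactly as in Lemma \ref{lem:lasso:sign:conditional}: the existence of a Lebesgue density for $G$ (and for $F$, or more precisely for $S$ on $\text{supp}(F)$) forces the image under $\phi$ of the relative boundary of the domain to be null, so the relative-interior computation suffices. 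I would state the genericity/smoothness hypotheses on ${\cal B}$ needed for $\psi|_{\bar{\mathcal{S}}_{\mathcal{B}}^F}$ to be a $C^1$ embedding of a manifold (as already assumed implicitly in the theorem's phrasing "suitable ${\cal B}$"), and then the computation above is routine multivariate calculus on that manifold.
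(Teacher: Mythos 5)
Your proposal is correct and follows essentially the same route as the paper: surjectivity read off from the KKT identity $\omega=\epsilon\beta+\alpha+z$, then a change of variables in local coordinates on the fiber over $s$, where the block-triangular derivative of $(s,\beta,\alpha,z)\mapsto(s,\phi(\beta,\alpha,z))$ yields the factor $\left|\det(D_{(\beta,\alpha,z)}\phi)\right|$, followed by integration over $\text{supp}(F)$ with density $f$ and the indicators recording the domain. The only difference is cosmetic: where you handle lower-dimensional fibers and $s$-dependent ${\cal B}$ via Hausdorff-measure bookkeeping and added smoothness hypotheses, the paper assumes ${\cal B}(s)={\cal B}$ constant and defers the remaining cases to the Kac--Rice formula.
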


\begin{proof}
The fact that $\psi$ restricted to $D_G$ is onto follows from its construction.
Let
${\cal F}(s) = \left\{(\beta,\alpha,z): \alpha \in \partial \ell(\beta;s), z \in \partial 
{\cal P}(\beta) \right\}$
denote the fiber over $s$ and assume that ${\cal B}(s)={\cal B}$ does not
vary with $s$. In this case, in any local coordinates
on ${\cal F}(s)$ standard multivariate calculus
yields the density \eqref{eq:change:measure:density} as the
derivative of the map
$$
(s,\beta,\alpha, z) \mapsto (s, \alpha + z + \epsilon \cdot \beta )
$$
takes the form
$$
\begin{pmatrix}
I_{n\times n} & 0_{n\times p} \\
D_s \phi & D_{(\beta,\alpha,z)} \phi
\end{pmatrix}
$$
with determinant $\det(D_{(\beta,\alpha,z)} \phi) $.
The result then follows by integrating over $\text{supp}(F)$ with density $f(s)$.
If $S$ does not have a density  then \eqref{eq:change:measure}
can be derived via the Kac-Rice formula \cite{kac_rice,rfg}. 

If ${\cal B}$ depends on $s$ then the Kac-Rice formula may often be used to
derive the above density given the ${\cal B}$ can be described as
the zero set of some smooth function $h^{\cal B}$. As most of our examples are such that ${\cal B}$ does not
depend on $s$ we omit the details.
\end{proof}


\begin{remark}
The correct interpretation of \eqref{eq:change:measure} has
as conditioning event
$$
\bar{\theta}_{-(s,\omega)}(s,\omega) \cap {\cal B}(s) \neq \emptyset
$$
where $\bar{\theta}$ is the map
$$
\bar{\theta}(s,\omega)=(s,\omega, \hat{\beta}(s,\omega), \hat{\alpha}(s,\omega), \hat{z}(s,\omega)).
$$
\end{remark}

\begin{remark} When $\text{supp}(G)=\real^p$ then $D_G=\solutionset^F(\ell, {\cal P})$.
\end{remark}


\begin{remark}
Above, we have constructed $\omega$ as a function of $(s,\beta,\alpha,z)$.
This is similar to the LASSO case in Section \ref{sec:gaussian:lasso}
in that we construct new 
random variables out of optimization variables. Often, it is also possible to construct $s$ from $(\omega,\beta,\alpha,z)$ though there is no
canonical embedding unless we provide more structure to the map $\ell$.
If $\ell$ is an exponential family negative log-likelihood so that
$$
\ell(\beta;s) = \Lambda(\beta) - s^T\beta
$$
then this is certainly possible. However, in order to
have a change of measure result as in Theorem \ref{thm:change:measure}, the 
law of $S$ should have a density on $\real^n$.
\end{remark}


\begin{remark}
Often, we will want to condition on some functions of 
$S$. Theorem \ref{thm:change:measure} formally holds unchanged for any 
distribution supported on a lower dimensional subset of $S$. 
One simply replaces the law $F$ with the appropriate law supported on
a lower dimensional set. This device was used
in Section \ref{sec:canonical:example}. A further example of this is considered
in Appendix \ref{sec:conditional} below. 
\end{remark}


\begin{remark}
The theorem assumes $\omega$ is independent of the data $S$. It is straightforward
to see that a similar result holds if we replace throughout the density $g$ above with 
$K(\omega;s)$ a kernel for the conditional density of $\omega|s$.
\end{remark}



\section{Polyhedral examples} \label{sec:polyhedral}

We now begin describing several instances of the selective sampler. 
In this section, the penalties or constraints are polyhedral.
In this case, the reconstruction maps are typically
affine in the optimization
variables but may be non-trivial 
in the data.
Some of these examples were also considered in \cite{selective_sampler_nips}. 
We repeat them here, in more explicit detail, as concrete
examples of the selective sampler.

\subsection{LASSO with Gaussian errors and fixed design matrix}
\label{sec:random:lasso}

As is often the case, the LASSO serves as a canonical example. We denote the parametrization, based on the active set and signs as $\psi_{(E,z_E)}$ on domain $D_{(E,z_E)}$ and the reconstruction map for randomization $\omega$ as $\phi_{(E,z_E)}$ throughout. In all the below examples, we are implicitly
thinking of cases when $\text{supp}(G)=\real^p$ and $G$ has a Lebesgue density, the canonical example being $N(0, \sigma^2_{\omega} I_p)$.

This example was addressed in Section \ref{sec:canonical:example}, though we present it here in the general notation
developed so far.
The embedding in Theorem \ref{thm:change:measure}
plays the role of $\psi_{(E,z_E)}$ in the
parametric LASSO example. 
The randomized LASSO program \cite{randomized_response} 
with randomization $G$ 
is defined for each $(y, X, \lambda, \omega) \in \real^n \times \real^{n \times p} \times (0,\infty) \times \real^p$ as
\begin{equation} \label{eq:lasso:randomized:program}
\minimize_{\beta \in \real^p} \frac{1}{2} \|y - X\beta\|^2_2 + \frac{\epsilon}{2} \|\beta\|^2_2 - \omega^T \beta + \lambda \|\beta\|_1, \qquad y|X \sim F, \omega \sim G
\end{equation}
with $\omega$ independent of $(X,y)$.

When $G = \delta_0$, we recover the parametric LASSO with $\epsilon=0$ and
the parameteric Elastic Net if $\epsilon > 0$ \cite{elastic_net}. 
Supposing then that $\text{supp}(G)=\real^p$, and the canonical selection event given by
$$
{\cal B}_{(E,z_E)} = \left\{ \beta : \text{diag}(z_E)\beta_E > 0, \beta_{-E} = 0 \right \}.
$$
A parametrization of 
$$
\{(s,\omega,\beta, \alpha, z)\in  \bar{\mathcal{S}}_{0}^F(G,\epsilon,\ell,\mathcal{P}) : \beta \in\mathcal{B}_{(E,z_E)} \}
$$ 
is given by
\begin{equation*}
\begin{aligned}
	\psi_{(E,z_E)}(y, \beta_E, u_{-E}) 
 = \biggl(y, \:\epsilon &\begin{pmatrix} \beta_E \\0 \end{pmatrix}-X^T(y-X_E\beta_E)+\lambda\begin{pmatrix}  z_E \\ u_{-E} \end{pmatrix}, \\
& \begin{pmatrix} \beta_E \\0\end{pmatrix}, -X^T(y-X_E\beta_E), \lambda\begin{pmatrix}
	 z_{E}\\ u_{-E} \end{pmatrix}\biggr)
\end{aligned}
\end{equation*}
with the domain
\begin{equation} \label{eq:lasso:fixedX:support}
D_{(E,z_E)}=\left\{(y,\beta_E,u_{-E}): \text{diag}(z_E)\beta_E> 0, \|u_{-E}\|_{\infty} \leq 1 \right\}.
\end{equation} 
The reconstruction map for $\omega$ is given by
\begin{equation}
\begin{aligned}
\phi_{(E,z_E)}(y, \beta_E, u_{-E}) 
 = \epsilon &\begin{pmatrix} \beta_E \\0 \end{pmatrix}-X^T(y-X_E\beta_E)+\lambda\begin{pmatrix}  z_E \\ u_{-E} \end{pmatrix},
\end{aligned}
\end{equation}
again with the same support as the parametrization map.
For the canonical event ${\cal B}_{(E,z_E)}$, 
we therefore need to sample from a density proportional to 
\begin{equation} \label{eq:randomized:lasso:fixedX:density}
f_{\mu}(y) \cdot g \left(\epsilon \begin{pmatrix}\beta_E \\ 0 \end{pmatrix} - X^T(y-X\beta_E) + \lambda  \begin{pmatrix}  z_E \\ u_{-E} \end{pmatrix} \right) \cdot \left|\det(X_E^TX_E + \epsilon I)\right|
\end{equation}
supported on $D_{(E,z_E)}$, where $f_{\mu}$ is the $N(\mu,\sigma^2 I_n)$ density. As above, 
the same expression holds if our model for $y|X$ is not from the normal family. \\
A common variant of the LASSO implemented in \cite{glmnet} is
\begin{equation}
\minimize_{\beta \in \real^p} \frac{\|y-X\beta\|^2_2}{2 \|X^Ty\|_{\infty}}  + \lambda \|\beta\|_1.
\end{equation}
A randomized version of this is easily handled, changing the density to be proportional to
$$
f_{\mu}(y) \cdot g \left(\epsilon \begin{pmatrix}\beta_E \\ 0 \end{pmatrix} - \frac{X^T(y-X_E\beta_E)}{\|X^Ty\|_{\infty}} +  \lambda\begin{pmatrix}  z_E \\ u_{-E} \end{pmatrix} \right) \cdot \left|\det\left(\frac{X_E^TX_E}{\|X^Ty\|_{\infty}} + \epsilon I \right)\right|
$$
and supported on the same set as in \eqref{eq:lasso:fixedX:support}.
Another variant \cite{sabourin_nobel_permutation}
replaces $\|X^Ty\|_{\infty}$ with a resampling
based quantity $\text{median}(X^Ty)$ resulting in a similar
change in the sampling density.


\begin{remark}
\emph{\textit{Conditioning on Nuisance Statistics:}}
As detailed in \cite{optimal_inference}, we condition not just on the selection event ${\cal B}_{(E,z_E)}$, but also on the sufficient statistics corresponding to nuisance parameters to obtain optimal UMPU selective tests/ intervals. In such a case, the support for $y$ in the selective sampling density is restricted to a set, denoted as $D_{\text{obs}}$. This has been sketched for interested readers in \ref{sec:conditional} in the appendix.
\end{remark}


\subsection{The selective sampler is model agnostic: LASSO without parametric assumptions}

Up this point, we have assumed so far that $X$ is fixed and 
the law of $y|X$ was from the parametric model $\{N(\mu,\sigma^2 I): \mu \in \real^n\}$.

We now remove this assumption, assuming that the law $F$ is now just a law for the 
pair $(X, y) \in \real^{n \times p}\times \real^n$. 
A common assumption is the pairs model: $(x_i, y_i) \overset{IID}{\sim} \tilde{F}$ for some distribution
$\tilde{F}$ on $\real^p \times \real$, where $x_i^T$ are the rows of $X$. While we keep
this model in mind, it is not necessary in order to define
the appropriate sampler. This section serves as an example of the sense in which
our main result Theorem \ref{thm:change:measure} is agnostic to the underlying statistical model. 

In the notation established so far
$$
\begin{aligned}
S &= (X, y) \in \real^n \times \real^{n \times p}, \\
\ell(\beta;(X,y)) &= \frac{1}{2} \|y-X\beta\|^2_2, \\
{\cal P}(\beta) &= \lambda \|\beta\|_1.
\end{aligned}
$$

Therefore, we must sample from a density proportional to 
\begin{equation*} 
\label{eq:randomX:sampler}
\begin{aligned}
 f(X, y) \cdot g \left(\epsilon \begin{pmatrix} \beta_E \\ 0 \end{pmatrix} - X^T(y-X_E\beta_E) +  \lambda \begin{pmatrix} z_E \\ u_{-E} \end{pmatrix} \right) \cdot\left| \det(X_E^TX_E + \epsilon I)\right|
\end{aligned}
\end{equation*} 
supported on
\begin{equation} \label{eq:lasso:randomX:support}
\left\{(X,y, \beta_E,u_{-E}): \text{diag}(z_E)\beta_E> 0, \|u_{-E}\|_{\infty} \leq 1 \right\}.
\end{equation}
In order to use this result for particular applications of inference, one may have to reduce the problem to
sampling from a particular reference distribution perhaps 
by conditioning on appropriate sufficient statistics, as described in Appendix \ref{sec:conditional}.

\subsection{Selective CLT: $\ell_1$-penalized logistic regression} \label{sec:logistic}

In this section, we describe an application of the selective CLT of \cite{randomized_response}, applied
to the case of logistic regression with random design matrix $X\in\mathbb{R}^{n\times p}$ with rows $x_i^T$, $i=1,\ldots, n$. Suppose $$x_i \overset{iid}{\sim} F_X,\; x_i \in \mathbb{R}^p, \; y_i|x_i \sim \text{Bernoulli}(\pi(x_i^T b)),$$ where 
$\pi(x) = \exp(x) / (1+\exp(x))$ and $b \in\mathbb{R}^p$ is unknown, $p$ fixed, $S = (X, y)$. 
With logistic loss,
$$
\ell(\beta;(X, y)) 
= -\frac{1}{\sqrt{n}} \left[\sum_{i=1}^n y_i \log \pi(x_i^T \beta) + (1-y_i) \log(1-\pi(x_i^T \beta)) \right],
$$
we solve randomized $\ell_1$-penalized logistic regression introduced in \cite{randomized_response},
\begin{equation}
    \label{eq:randomized_logistic}
   \minimize_{\beta \in \mathbb{R}^p} \ell(\beta; (X,y)) - \omega^T\beta + \frac{\epsilon}{2}\|\beta\|_2^2+ \lambda\|\beta\|_1 .
\end{equation}
On the usual selection event of observing active set and signs $(E,z_E)$, the randomization reconstruction map is
$$
\phi_{(E,z_E)}(y,\beta_E, u_{-E}) = \epsilon \begin{pmatrix}\beta_E\\ 0\end{pmatrix} - \frac{1}{\sqrt{n}}X^T (y - \pi(X_E{\beta_E})) + \lambda\begin{pmatrix} z_E\\ u_{-E}\end{pmatrix}.
$$ 
And, the sampling density becomes proportional to 
\begin{equation} 
\label{eq:randomX:logistic:sampler}
\begin{aligned}
 f(X, y) &\cdot g \left(\epsilon \begin{pmatrix} \beta_E \\ 0 \end{pmatrix} - \frac{1}{\sqrt{n}}X^T(y-\pi(X_E\beta_E)) +  \lambda \begin{pmatrix} z_E \\ u_{-E} \end{pmatrix} \right) \\
  &\cdot\left|\det\left(\frac{1}{\sqrt{n}}X_E^TW(X_E\beta_E)X_E + \epsilon I\right)\right|,
\end{aligned}
\end{equation}
where $W(X\beta)=\text{diag}(\pi(X\beta)(1-\pi(X\beta)))$\footnote{With slight abuse of notation, we allow $\pi: \mathbb{R}^n \to \mathbb{R}^n, x \mapsto \pi(x)$ to be the function
applied on each coordinate of $x \in \mathbb{R}^n$.} is the Hessian matrix of the loss and the density above is supported on the same set as in \eqref{eq:lasso:randomX:support}. 
Sampling $(X, y)$ jointly is not feasible when $F_X$ is unknown.
Denoting 
\begin{equation} \label{eq:logistic:unpenalized:mle}
	\bar{\beta}_E = \underset{\beta \in\mathbb{R}^{|E|}}{\textnormal{argmin}} -\sum_{i=1}^n y_i\log\pi(x_{E,i}^T\beta) +(1-y_i)\log(1-\pi(x_{E,i}^T\beta)),
\end{equation}
the MLE for the unpenalized logistic regression with only the variables in $E$,
a Taylor series expansion of $\nabla \ell(\beta;(X,y))$ gives\footnote{Alternatively, we might take $\bar{\beta}_E$ to be the one-step estimator in the selected model starting from $\hat{\beta}_E$ \cite{penalized_l1}.}
\begin{align*} 
\nabla \ell(\beta;(X,y)) &\approx \sqrt{n}\begin{pmatrix} Q(\beta_E-t_{E}) \\ -t_{-E}+C(\beta_E-t_{E}) \end{pmatrix}.
\end{align*}
\noindent Here, $t$ is the observed value of the random vector
\begin{equation} \label{eq:logistic:T:defined}
T=
\begin{pmatrix}
\bar{\beta}_E \\
 \frac{1}{n}X_{-E}^T( y - \pi(X_E\bar{\beta}_E))
\end{pmatrix},
\end{equation}
$$Q=\frac{1}{n}X_{E}^TW(X_E\bar{\beta}_E)X_E \text{ and } C=\frac{1}{n}X_{-E}^TW(X_E\bar{\beta}_E)X_E.$$ Since $Q$ and $C$ converge by the law of large numbers to fixed quantities, we can either treat them as fixed or bootstrap them. For more details, see \cite{randomized_response}. \\
With $p$ is fixed, pre-selected, $T$ properly scaled is asymptotically normal 
\begin{equation}
\label{eq:asymptotic:normality}
 \sqrt{n} \left(T - \begin{pmatrix}b_E \\ \gamma_{-E} \end{pmatrix} \right) \overset{D}{\to} N(0, \Sigma),
\end{equation}
where $\Sigma$ is estimable from the data. When the selected parametric model is correct ($E \supseteq \textnormal{supp }{b}$)
we note that $\gamma_{-E}=0$ and $\Sigma$ is in fact block-diagonal \citep{penalized_l1}.
When the selected parametric model is not correct, one can estimate this covariance nonparametrically, using bootstrap (see Section \ref{sec:implementation}).
Since asymptotically $T$ is from an exponential family with parameters $b_E$, we could base inference on $b_E$ by sampling from the distribution of $T$ instead of $(X, y)$.


In this case, the parametrization map becomes
\begin{equation}
\begin{aligned}
	\psi_{(E,z_E)}(t, \beta_E, u_{-E}) = \biggl(t,  \epsilon &\begin{pmatrix}
 	\beta_E \\ 0  \end{pmatrix} +\sqrt{n}\begin{pmatrix} Q(\beta_E-t_{E}) \\ -t_{-E}+C(\beta_E-t_{E}) \end{pmatrix}+\lambda\begin{pmatrix}
 		 z_E \\ u_{-E}
 	\end{pmatrix}, \\
 	&\begin{pmatrix}
 \beta_E \\ 0	
 \end{pmatrix}, \sqrt{n}\begin{pmatrix} Q(\beta_E-t_{E}) \\ -t_{-E}+C(\beta_E-t_{E}) \end{pmatrix}, \lambda\begin{pmatrix} z_E\\ u_{-E} \end{pmatrix}
 \biggr),
 \end{aligned}
\end{equation}
with the domain $\mathbb{R}^p\times \mathbb{R}^{|E|}_{z_E}\times [-1,1]^{p-|E|}$. Hence, the density we sample from is proportional to 
\begin{equation} \label{eq:logistic:density}
	f(t)\cdot g\left(\epsilon \begin{pmatrix}
 	\beta_E \\ 0  \end{pmatrix} +\sqrt{n}\begin{pmatrix} Q(\beta_E-t_{E}) \\ -t_{-E}+C(\beta_E-t_{E}) \end{pmatrix}+\lambda\begin{pmatrix}
 		 z_E \\ u_{-E}
 	\end{pmatrix}\right),
\end{equation}
restricted on the domain of $\psi_{(E,z_E)}$, where $f$ here is the density of $\mathcal{N}\left(\begin{pmatrix}
	b_E \\ 0
\end{pmatrix}, \Sigma\right)$.
We note that, in order to construct a valid test about some linear functional of $b_{E}$ we can always condition on the observed $u_{-E}$ and / or $\beta_E$ if we desire.
The upside to such conditioning is that the sampling problem becomes easier, with the downside usually being a loss in selective power.

\subsection{First step of forward stepwise}
\label{sec:kac:rice:poly}

The Kac-Rice tests, described in \cite{kac_rice} are based on the solution of 
\begin{equation}
\label{opt-Kac Rice}
\maximize_{\eta \in \K}\eta^T X^T y, \; \text{ where } y\sim F,
\end{equation}
$X\in \real^{n\times p}$ is a fixed design, $\K$ a convex set that can be stratified into smooth disjoint manifolds and the process $\eta^TX^T y$ is Morse for almost every $y\in \real^n$. The simplest example of such
tests is one step of forward stepwise model selection, in which case $\K$ is the $\ell_1$ ball of radius 1.
In this case $X$ will be usually centered and scaled so that $X^Ty$ corresponds
to the marginal $Z$ statistics for $p$ different simple linear regressions.

Inference in this broad class of problems, for the global null, is based on the test statistic $\eta^{*T}X^Ty$, where
\begin{equation}
\label{test-stat-Kac Rice}
\eta^*=\argmax_{\eta \in \K}\eta^T X^T y,
\end{equation}
with $\K$ being the polar set of convex set $C$. Since, the null distribution of the above test statistic is intractable, we could instead provide inference based on the conditional law
$$\mathcal{L}\left(\eta^{*T}X^Ty \:\bigg |\: \eta^* =\argmax_{\eta \in \K}\eta^T X^T y\right).$$

With this brief description of inference based on Kac-Rice tests, we focus back on the randomized versions of the Kac-Rice objective in \eqref{opt-Kac Rice}. The randomized objective is given by
\begin{equation}
\begin{aligned}
\label{opt-rand-Kac Rice}
\maximize_{\eta\in\real^p}\eta^T (X^T y+\omega)-I_{\K}(\eta), \text{ where } y\times \omega\sim F\times G,
\end{aligned}
\end{equation}
with $\omega\in \real^p$ and penalty manifests as the characteristic function of convex set $\K$, that is 
\[ I_\K(\eta) = \begin{cases} 
      0 & \text{if } \eta\in\K,\\
      \infty & \text{otherwise.}
   \end{cases}
\]
Here we set $\epsilon=0$, as the above optimization problem does always have a solution. Having $\epsilon > 0$ would allow
for several variables to be selected.

As mentioned above, perhaps the simplest example of the randomized optimization problem in \eqref{opt-rand-Kac Rice} 
is forward stepwise problem with a single step. Specifically, that is consider
$$\maximize_{\eta \in \K}\eta^T (X^T y+\omega),$$
for $$ \K=\{\eta\in \real^p: \|\eta\|_1\leq 1\}.$$
The above optimization problem yields the optimal direction
\begin{equation*}
	\eta^*_j=\begin{cases}
s^* & \textnormal{ if } j=j^*\\
0 & \textnormal{ otherwise, }	
\end{cases}
\end{equation*}
$j=1,\ldots,p$, where 
$$
j^*= \argmax_{1\leq j\leq p} |X_j^Ty +w_j|, 
$$
coordinate with the maximum absolute value, and 
$$
s^*=\text{sign}(X_{j^*}^Ty +w_{j^*}),
$$
the corresponding sign. Thus, we condition on the first active direction (both $s^*$ and $j^*$), which gives rise to selection event
\begin{equation*}
\begin{aligned}
	\hat{E}_{(s^*, j^*)} = \bigl \{ (y,\omega)\in\mathbb{R}^n\times\mathbb{R}^p \::\: \text {sign}(X_{j^{*}}^T y+\omega_{j^*})=s^*, \\ 
	s^* (X_{j^*}^{T}y+\omega_{j^*})\geq \underset{1\leq j\leq p}{\max}|X_j^Ty+\omega_j| \bigr \}.
\end{aligned}
\end{equation*}
The subgradient equation yields the reconstruction map
\begin{equation*}
\label{KKT0}
\phi_{(j^*,s^*)}(y,z)=z-X^T y,
\end{equation*}
where $z \in \partial I_{\K}(\eta^*)$, the set of sub-gradients to $\K$ at $\eta^*$, given by the normal cone
\begin{equation*}
\begin{aligned}
\label{subgrad-FST}
\partial I_{\K}(\eta^*)&=\{c(u_1, \ldots, u_{j^*-1}, s^*, u_{j^*+1},\ldots,u_p): u_i \in\mathbb{R}, |u_i|\leq 1, c>0 \}.
\end{aligned}
\end{equation*}

We can reparametrize the set 
$$\left\{ (y, \omega, \beta, \alpha, z)\in  \bar{\mathcal{S}}^F_0(G, \epsilon = 0, \ell,\mathcal{P}): (y,\omega)\in \hat{E}_{(s^*,j^*)} \right\}$$
using
\begin{equation*}
\label{FST-one step-map}
\psi_{(s^*, j^*)}(y,z)=(y,z-X^Ty, \eta^*, X^Ty, z),
\end{equation*}
($\eta^*_j = s^* \mathbb{I}_{\{j=j^*\}}$, $j=1,\ldots, p$), with the domain $\mathbb{R}^n\times \partial I_{\mathcal{K}}(\eta^*)$.

With the above reparametrization, we sample $(y,z)$ from a density proportional to
\begin{equation}
\label{sampler-density-FST}
f(y)\cdot g(z-X^Ty),
\end{equation}
supported on $\mathbb{R}^n\times  \partial I_{\mathcal{K}}(\eta^*) $.


\begin{remark} \label{remak:cone}
The set of sub-gradients at $\eta^*$ for $s^*>0$ can be identified as the epigraph of the $\ell_{\infty}$ norm (modulo a permutation of the maximum coordinate $j^*$) and for $s^*<0$, $\partial I_{\mathcal{K}}(\eta^*)$ is the polar cone of the epigraph of $\ell_1$ norm.
\end{remark}

\begin{remark} \label{remak:moresteps}
Of course, it is usually of interest to take more than one step of forward stepwise. 
Inference after several steps of forward stepwise is considered in \cite{spacings,sequential_selective}. We consider
several steps of forward stepwise in Section \ref{sec:multiple:views}.
\end{remark}

\begin{remark} \label{remak:adjust2}
The data analyst may have some set of variables $\bar{E}$ that she insists on controlling for. 
In this case, instead of just assuming $X$ is centered, we might assume that $\mathcal{P}^{\perp}_{\bar{E}}X=X$ and its columns normalized,
where $\mathcal{P}_{\bar{E}}$ denotes projection onto the column space of $X_{\bar{E}}$.
In this way, centering $X$  corresponds to the common practice controlling for an intercept in the model.
\end{remark}

\begin{remark}
\label{remark:FS:glm}
Forward stepwise is also used in generalized linear models such as logistic regression. In this case, the quantity
$X^T\mathcal{P}^{\perp}_{E'}y$ that appears in the least squares setting can be replaced either by the Wald Z-statistics or the
score Z-statistics. For example, for logistic regression, the score Z-statistics take the form
\begin{equation}
\label{eq:score:glm}
Z_j = \frac{X_j^T(y - \pi(X_{E'}\bar{\beta}_{E'}))}{(X_j^TW(X_{E'}\bar{\beta}_{E'})X_j)^{1/2}}, \qquad 1 \leq j \leq p
\end{equation}
where $\bar{\beta}_{E'}$ is the unpenalized MLE for the model with variables $E'$ and
$$
W(X_{E'}\bar{\beta}_{E'}) = \text{diag}(\pi(X_{E'}\bar{\beta}_{E'}) (1 - \pi(X_{E'}\bar{\beta}_{E'}))
$$
is a consistent estimate of the variance of $y$ under the model with variables $E'$. 
While this is the typical parametric estimate of variance forward stepwise would use, one might prefer
using a jackknife or bootstrap estimate of this variance if one is unsure whether the model with
variables $E'$ is correctly specified, as would be the case early on in building a model 
via forward stepwise. Using Wald type $Z$ statistics would require fitting $p$ different logistic regression models which 
has some computational burden. 

Having computed the $Z_j$'s one might then consider a randomized
version of the problem
$$
\maximize_{\eta: \|\eta\|_1 \leq 1} \eta^TZ
$$
and proceed as above with $S$ being the vector $Z$.
\end{remark}


\subsection{Marginal screening} 
\label{sec:marginal:screening}

Marginal screening computes marginal $Z$ statistics
$$
Z_j = \frac{X_j^T y}{\sigma\|X_j\|_2}
$$
or $T$ statistics
$$
T_j = \frac{X_j^T y}{\hat{\sigma}_j\|X_j\|_2}
$$
 for each of $p$ centered  variables $X_j$ and thresholds their absolute value at some threshold, perhaps
$z_{1 -\alpha/2}$ where $\alpha$ is some nominal $p$-value threshold.
This can be expressed in optimization form as
$$\minimize_{\eta: \|\eta\|_{\infty} \leq c}\frac{1}{2}\|\eta-T\|^2_2.$$
Selective inference in the nonrandomized setting for this problem was considered in 
\citep{lee_screening}.

A natural randomized version would be
$$\minimize_{\eta: \|\eta\|_{\infty} \leq c}\frac{1}{2}\|\eta-T\|^2_2 - \omega^T\eta.$$
Conditioning on the set achieving the threshold $c$ and their signs to be $(E,z_E)$, we see that this event is
\begin{equation} \label{eq:ms:support}
\left\{(T,\eta,z): \eta_E = c\cdot z_E, \text{diag}(z_E) z_E \geq 0, \|\eta_{-E}\|_{\infty} < c, z_{-E}=0 \right\}
\end{equation}
and the randomization reconstruction map becomes
\begin{equation*}
	\phi_{(E, z_E)}(T, \eta_{-E}, z_E) = \begin{pmatrix}
		c \cdot z_E \\ \eta_{-E}
	\end{pmatrix} -T+\begin{pmatrix}
		z_E \\ 0
	\end{pmatrix}.
\end{equation*}
We thus sample $(y, X, \eta_{-E}, z_{E})$ (or $(T,\eta_{-E}, z_E)$ if $X$ is random) from a selective density proportional to  
\begin{equation}
\label{eq:ms:density}
f(T)\cdot g\left(\begin{pmatrix}
	 c\cdot z_E \\ \eta_{-E} \end{pmatrix} - T + \begin{pmatrix}
 	z_E \\ 0
 \end{pmatrix} \right)
\end{equation}
and supported on the  event in \eqref{eq:ms:support}, where $f$ is the unselective law of $T$.
For logistic regression, one can replace the $T$ statistics above with the score statistics
as described in Remark \ref{remark:FS:glm}.


\subsection{Full model screening} \label{sec:full:model:screening}

Another possible way to screen variables is to threshold the coefficients from the output of an unpenalized, randomized selection program given by
\begin{equation} \label{eq:full:ms:objective}
	\minimize_{\beta\in \real^p}\frac{1}{2}\|y-X\beta\|_2^2 + \frac{\epsilon}{2}\|\beta\|_2^2-\omega^T\beta, \;\; ((X,y),\omega)\sim F\times G,
\end{equation}
Based on the solution $\hat{\beta}((X,y), \omega)$, we define the threshold model $(E,z_E)$ as
$$E=\{i:|\hat{\beta}_i((X,y),\omega)|\geq a\sigma\}$$ and signs $z_E=\textnormal{sign}(\hat{\beta}_E((X,y),\omega))$, where $a$ is a constant (e.g.~$1-\alpha$ quantile of the standard normal or the $t$-distribution for some nominal level $\alpha$) and $\sigma$ is the scaling. The canonical event of interest is
\begin{equation*}
	\mathcal{B}_{(E,z_E)} = \{\beta\in\mathbb{R}^p:\: 
	 |\beta_i|\geq a \sigma \; \forall i\in E,
	 \|\beta_{-E}\|_{\infty}<a\sigma,
	 \textnormal{diag}(z_E)\beta_E>0 \}.
\end{equation*}

A parametrization of $$\{((X,y),\omega, \beta, \alpha,z)\in  \bar{\mathcal{S}}_{0}^F(G,\epsilon,\ell,\mathcal{P}) : \beta \in\mathcal{B}_{(E,z_E)} \}$$ 
is given by
\begin{equation*}
	\psi_{(E,z_E)}((X,y), \beta) 
 = \left((X,y), \: \epsilon \beta-X^T(y-X\beta), \beta, -X^T(y-X\beta), 0\right)
\end{equation*}
with the domain
\begin{equation*}
D_{(E,z_E)}=\left\{((X,y),\beta): \text{diag}(z_E)\beta_E> 0, |\beta_i|\geq a\sigma \; \forall i\in E,\|\beta_{-E}\|_{\infty} < a\sigma \right\}.
\end{equation*} 

For the canonical event ${\cal B}_{(E,z_E)}$, 
we therefore need to sample $((X,y), \beta)$ from a density proportional to 
\begin{equation}
f(X,y) \cdot g \left(\epsilon\beta - X^T(y-X\beta) \right) \cdot \left|\det(X^TX + \epsilon I)\right|
\end{equation}
supported on $D_{(E,z_E)}$, where $f$ and $g$ are the densities of $F$ and $G$, respectively. If $X$ is random, the sampling of the data simplifies as in Section \ref{sec:logistic}. The scaling $\sigma$ can be estimated as the variance of noise from the selected model with response $y$ and matrix of predictors $X_E$. As long as the estimate of $\sigma$ is consistent, we can treat it as a constant in the selection event, hence have the selection event as the polyhedral region.

\subsection{Selective sampler based on a penalized MLE: Fisher's exact selective test}
\label{sec:goodness:of:fit} 


In Section \ref{sec:mle}, we considered the pull-back of the unpenalized MLE for
an exponential family. 
Allowing for penalization in the problem \eqref{eq:mle:problem}
 as well as randomization yields programs of the form
\begin{equation*}
\label{eq:canonical:program}
\minimize_{\beta \in \real^p} \Lambda(\beta) -S^T\beta  +\frac{\epsilon}{2}\|\beta\|_2^2-\omega^T\beta + {\cal P}(\beta), \;\; (S,\omega)\sim F\times G.
\end{equation*}
We describe a version of Fisher's exact test based on observing some function
of the solution to the above program.
As we saw in the LASSO, the penalty ${\cal P}$ is often chosen such that the solution set
\begin{equation*}
\label{eq:solution:set}
\left\{(\beta, z): z \in \partial {\cal P}(\beta) \right\}
\end{equation*}
possesses a nice stratification into a discrete collection of bundles ${\cal C}$,
 the canonical example being the $\ell_1$ norm, or perhaps non-negative constraints. In particular, if 
${\cal P}$ is the support function of some convex set $K$, then \eqref{eq:solution:set}
is seen to be the normal bundle of $K$ which itself often has a nice stratification.

Fixing ${\cal C}$ to be one of these strata, the subgradient equations of the optimization problem yields
a reconstruction map
$$
\phi_{\mathcal{C}}(s,\beta,z) =  \nabla \Lambda(\beta) -s + z + \epsilon \cdot \beta
$$
where $(\beta, z)\in \mathcal{C}$ and $s \in\textnormal{supp}(F)$.
In this case, the parametrization map of $$\{(s,\omega, \beta, \alpha, z)\in \bar{S}_0(G,\epsilon, \ell, \mathcal{P}): (\beta,z)\in\mathcal{C}\}$$ becomes
\begin{equation*}
	\psi_{\mathcal{C}}(s,\beta,z) = (s,\nabla \Lambda(\beta) -s+\epsilon\beta +z,\beta,\nabla \Lambda(\beta) -s , z )
\end{equation*}
and the sampling density proportional to
\begin{equation*}
	f(s)\cdot g(\nabla \Lambda(\beta) -s+\epsilon\beta +z) \cdot \left|J\phi_{\mathcal{C}}(s, \beta, z) \right|
\end{equation*}
and restricted to $(s,\beta,z)\in M\times \mathcal{C}$.

We now describe a stylized instance of such a problem used in the context of goodness-of-fit tests.
Much of the effort in selective inference
has been focused on finding tools for inference about specific
parameters in a model, i.e. inference about statistical functionals
$\theta:{\cal M} \rightarrow \mathbb{R}$. 
Two examples in the literature that
employ such goodness-of-fit tests are \cite{choi_pca,sequential_selective}.

In this case, we assume 
the data analyst will use the data to decide what sufficient statistics
to use in constructing the goodness-of-fit test.
Suppose our data $S$ is a poisson process $N$ with points in ${\cal X}$ and 
intensity measure modeled with density
$$
\frac{d\lambda(\beta)}{d\mu}(x) = \exp \left(\sum_{j=1}^p \beta_j h_j(x)\right)
$$
with respect to some reference probability measure $\mu$ on ${\cal X}$.

If ${\cal X}=[0,1]$ and $h_j = 1_{[l_j,u_j]}$ were a collection of 
indicator functions, this would correspond to a change-point model.
We might then use
something like the fused LASSO  in which we can take the intervals to be the step functions $[l_j,u_j] = [(j-1)/p, 1]$ or perhaps
a multiscale approach as in \cite{frick,walther1,walther2} and use the penalty 
$$
{\cal P}(\beta) = \sum_{j=1}^p \lambda_j |\beta_j|
$$
a weighted $\ell_1$ penalty\footnote{The multiscale approach described in \cite{frick,walther1,walther2} is formally
a testing approach. The penalized version above was proposed in 
\cite{selective_sqrt} based on the dual of the test
statistic in the multiscale literature.}
. Usually, the background rate 
is included in such a model by fixing $h_1$ to be the constant
function 1. An unpenalized fit for this variable corresponds to its corresponding
$\lambda$ being 0.

Depending
on our choice of penalty, we might then solve the following
randomized program  
$$
\minimize_{\beta \in \real^p} \Lambda(\beta) -  \sum_{j=1}^p \left(\beta_j \int_{\cal X} h_j(x) \; N(dx) \right)
+ {\cal P}(\beta) + \frac{\epsilon}{2} \|\beta\|^2_2 - \omega^T\beta
$$
where
$$
\Lambda(\beta) = \log \left[\int_{\cal X} \left[\exp \left(\sum_{j=1}^p \beta_j h_j(x) \right) -1\right] \; \mu(dx)\right].
$$

Having solved the above problem, the data analyst now observes that 
$(\hat{\beta}(N,\omega),\hat{z}(N,\omega))$ are in some subbundle ${\cal C}$ of the
set of variable-subgradient pairs.
When ${\cal P}$ is a weighted $\ell_1$ penalty, 
the conditioning event
$$
\left\{(N,\omega):\hat{E}(N,\omega) = E, z_{\hat{E}(N,\omega)}=z_E\right\}
$$
where
$$
\hat{E}(N,\omega) = \left\{j: \hat{\beta}_j(N,\omega) \neq 0, \lambda_j > 0 \right\}
$$
are the selected penalized coordinates. 
The corresponding subbundle is the set
$$
{\cal C} = \left\{(\beta_E, \beta_U, z_{-E}): \text{diag}(z_E)\beta_E \geq 0, \|z_{-E}\|_{\infty} \leq 1,
\beta_U\in\real^U \right\}
$$
where $U=\{j:\lambda_j=0\}$ is the set of unpenalized coordinates. With some abuse of notation, we write
$z=(z_E, z_U, z_{-E}) = (z_E,0,z_{-E})$ as the full subgradient where $z_E$ are held fixed on ${\cal C}$ and 
$z_U=0$ as these coordinates are unpenalized.

The data analyst now decides to test the null hypothesis
$$
H_{0,j|E}: \beta_{j|E} = 0, \qquad j \neq 1
$$
in the model
$$
 \log \lambda(\beta) = \sum_{j \in E} \beta_j \int_{\cal X} h_j(x) \; \mu(dx).
$$
Following the exponential family setup in \cite{optimal_inference}, we
might condition on the nuisance sufficient statistics
$$
\int_{\cal X} h_i(x) N(dx), \qquad i \in E \setminus j.
$$
Assuming we have included a background rate in the model, this
fixes the total number of points in the Poisson process to be
$N({\cal X})$ the observed number of points.

If we had not used the data to select the intervals, conditioning on these sufficient
statistics and resampling points is exactly Fisher's exact test modulo the choice
of test statistic.
That is, the appropriate reference measure can be constructed by 
sampling from a Binomial process $\tilde{N}$ with $n=N({\cal X})$ 
points and distribution
$\mu$ conditioned to satisfy
$$
\int_{\cal X} h_i(x) \; \tilde{N}(dx) = \int_{\cal X} h_i(x) \; N(dx), \qquad i \in E \setminus j
$$
where the right hand side are the values observed in the data.
When ${\cal X}$ is a discrete space, this is a generalization of Fisher's exact
test. Sampling for such problems have a fairly rich literature (c.f. \citep{diaconis_sturmfels}). When ${\cal X}$ is Euclidean or a manifold, the conditioning
event above corresponds to a subset of the configurations of ${\cal N}({\cal X})$
points on ${\cal X}$ and is generally a nontrivial task. See \citep{diaconis_sampling_manifold} for further discussion and examples.

Having used the data to choose which sufficient statistics to use, we must use an appropriate
selective distribution.
Beyond just sampling $\tilde{N}$ from the conditional density, we must
sample $(\beta_E, \beta_U, z_{-E})$. Conditional on $\tilde{N}$ we see that
the appropriate density of the joint law with
respect to the product of the law of $\tilde{N}$ and $p$-dimensional Hausdorff
measure on ${\cal C}$ is proportional to
$$
(\tilde{N}, \beta_E, \beta_U, z) \mapsto g \left( \nabla \Lambda(\beta) - \int_{\cal X} h(x) d\tilde{N}(x) + \epsilon \cdot \beta + \text{diag}(\lambda) z \right)
$$
with 
$$
\nabla \Lambda(\beta) = 
\frac{\int_{\cal X} h(x) \exp \left(\sum_{i \in E \cup U} \beta_i h_i(x) \right) \; \mu(dx)}{\int_{\cal X}  \exp \left(\sum_{i \in E \cup U} \beta_i h_i(x) \right) \; \mu(dx)}
$$
For computational reasons, to evaluate the
integrals over ${\cal X}$ above, it may be simpler to use a discretization of 
${\cal X}$ as in Lindsey's method \cite{efron_tibs_lindsey}.

Finally, while we have considered using a weighted LASSO to choose the
sufficient statistics, one might use a penalty with some curvature as well, requiring
the modifications discussed in Section \ref{sec:curvature}.


\subsection{Graphical models} \label{sec:graphical:models}

Gaussian graphical models are a popular way to study network structures. In particular, it has often been 
used on many types of genome data, e.g. gene expression, metabolite concentrations, etc. Specifically,  
consider the $p$-dimensional normally distributed random variable
$$
X = (x_1, \dots, x_p) \sim N(\mu, \Sigma).
$$
It is of interest to study the conditional independence structure 
of the variables $\{1,2, \dots, p\}$. The conditional independence structure  
is conveniently represented by an undirectional graph $(\Gamma, \mathcal{E})$, where the nodes are $\Gamma = \{1,2,\dots,p\}$, and there is an edge $(i,j)\in\mathcal{E}$ if and only if $x_i \not\perp x_j$ conditional on all the other variables $\Gamma \backslash \{i, j\}$. Moreover, 
assuming the covariance matrix $\Sigma$ is not singular, we denote the inverse covariance matrix $\Theta = \Sigma^{-1}$, then
$$
x_i \perp x_j | X_{\Gamma \backslash \{i, j\}} \iff \Theta_{ij} = 0.
$$
In many applications of Gaussian graphical models, we assume the sparse edge structure, where we can hope to uncover 
the network structure even in the high-dimensional setting. 
We discuss applying the selective sampler
to the graphical LASSO \cite{glasso}. Another graphical model selection, neighborhood selection,
is discussed in Appendix \ref{sec:neighborhood:selection}.

The randomized graphical LASSO with randomization $\omega\sim G$
is the convex program
\begin{equation}
\label{eq:graphical:lasso}
\minimize_{\Theta: \Theta^T=\Theta, \Theta > 0} - \log \det (\Theta) + \text{Tr}((S - \omega)\Theta) + \lambda {\cal P}(\Theta), \qquad (S,\omega) \sim F\times G,
\end{equation}
where
$$
{\cal P}(\Theta) = \sum_{(i,j): i \neq j} |\Theta|_{ij}
$$
and 
$$
S \in M_k = \left\{A: A^T=A, A \geq 0, \text{rank}(A) = k \right\}, \qquad k \leq p.
$$
Usually, 
$$
S=\frac{1}{n}X^T\left(I - \frac{1}{n}11^T\right)X, \qquad X \in \real^{n \times p}
$$
for some data matrix
so that $k$ is generically $\text{min}(n-1,p)$. Note that $\Theta, S,\omega$ are $p\times p$ matrices.

The subgradient equations read as
$$
 \omega = \hat{\Theta}^{-1}(S,\omega) + S + \hat{z}(S,\omega),
$$
where $\hat{z}(S,\omega)\in \partial \mathcal{P}(\hat{\Theta}(S,\omega))$.
For $E \subset \{(i,j): i \neq j\}$, the natural selection event of interest is
$$
\mathcal{S}=\left\{(S,\omega): \exists \, \Theta=\hat{\Theta}(S,\omega) \text{ s.t. } \ \text{sign}(\Theta_E) = z_E, {\Theta}_{-(E\cup \{(i,i): 1\leq i\leq p \}) } = 0 \right\},
$$
Owing to symmetry in ${\Theta}$, on $\mathcal{S}$ we have 
\[ \hat{z}_{ij}(S,\omega) = \cfrac{\partial \mathcal{P}(\hat{\Theta}(S,\omega))}{\partial \Theta_{ij}}=\begin{cases} 
      2\lambda s_{ij} & (i,j)\in E \\
      0 & i=j \\
      2 \lambda u_{ij} & (i,j)\notin E \text{ and } i\neq j,
   \end{cases}
\]
where $$ u_{ij}= u_{ji} \text{ satisfies } \| u\|_{\infty} \leq 1 \text{ for } {(i,j)\notin E, i\neq j},$$
and $$ s_{ij}=\text{sign}(\Theta_{ij}).$$
Due to symmetry, we can parametrize $\omega$ by the active elements in $\Theta$ along with its diagonal elements (dimension $|E|/2+p$) and the inactive elements of the penalty subgradient (dimension $(p^2-p-|E|)/2$), corresponding to the inactive elements, both restricted to the upper triangular part of $\Theta$. To fix notations, denote
$$E^*=\{(i,j)\in E: i\leq j\}\cup \{(i,j): i=j\},$$
the set of active coordinates in the upper triangular part of $\Theta$ along with its diagonal elements.

This leads us to the following parameterization of $\{(S,\omega, \Theta, \alpha,z)\in \bar{\mathcal{S}}_0(G,\epsilon=0, \ell,\mathcal{P}):(S,\omega)\in \mathcal{S}\}$
\begin{equation}
\begin{aligned}
\psi_{(E,z_E)} {(S,\Theta_{E^*}, u_{-E^*})} = (S, \:& \Theta^{-1}(E^*) +  S + z(u_{-E^*}),\Theta(E^*), \\
& \Theta^{-1}(E^*) + S, z(u_{-E^*})),
\end{aligned}
\end{equation}
where $\Theta(E^*)$ is the matrix with $E^*$ entries from $\Theta_{E^*}$ and zeroes elsewhere, $z_{ij}(u) = 2\lambda (s_{ij}\mathbb{I}_{\{(i,j)\in E\}}+u_{i,j}\mathbb{I}_{\{(i,j)\not\in E, i\neq j\}})$ and the domain of $\phi_{(E,z_E)}$ is restricted to $\|u_{-E^*}\|_{\infty}\leq 1$.

The randomization reconstruction map can be written as
$$
\phi_{(E,z_E)}(S,\Theta_{E^*},u_{-E^*})=\Theta^{-1}(E^*) +  S + z(u_{-E^*}).
$$
With this parametrization, we can sample from the density proportional to
\begin{equation*}
\begin{aligned}
	f(S) \: & \cdot  g(\Theta^{-1}(E^*) +  S + z(u_{-E^*}))\cdot \left|J\psi(S,\Theta_{E^*},u_{-E^*})\right|.
\end{aligned}
\end{equation*}
and restricted to $\|u_{-E^*}\|_{\infty}\leq 1$.
The computation of above Jacobian involves (see the Appendix of \cite{penalized_l1})
$$\left(\cfrac{\partial \phi_{(E,z_E)}}{\partial \Theta_{ij}}\right)_{kl} =2(\sigma_{il}\sigma_{jk}+\sigma_{ik}\sigma_{jl})\text{ for } (i,j)\in E^*,$$
$$\left(\cfrac{\partial \phi_{(E,z_E)}}{\partial u_{ij}}\right)_{kl} = 2\lambda \delta_{ik}\delta_{lj}\text{ for } (i,j)\notin E^*, i\leq j,$$
where $\sigma_{ij}$ are the elements of $\Sigma$ and $\delta_{ij}$ is Kronecker delta function.

The above computations give the $(k,l)$ coordinate of the $(i,j)$th upper triangular matrix and as again, we simply restrict attention to $\{(k,l): k\leq l\}$, the upper triangular half of randomization map.

More polyhedral examples, as well as the generalized LASSO are presented in Appendix \ref{sec:lasso:variants}.

\section{Curvature} \label{sec:curvature}

Up to now, our examples have been polyhedral in nature.
In most examples
above, the selection event was some part of the normal bundle of such a polyhedral convex set.
Not all problems can be described by such polyhedral sets. In this 
section we give some examples with non-trivial curvature. As above,
many of the interesting selection events are still parts of the normal
bundle.

\subsection{Projection onto a convex set}

The problem of projection onto a closed convex set $K$ is canonical
example of a convex program. Hence, it seems to be a natural place
to start to introduce curvature into
our discussion.
(Indeed two variants of the LASSO above can be expressed
in terms of projection onto a convex set.) Let
$$
\begin{aligned}
\ell(\beta;s) &= \frac{1}{2} \|\beta-s\|^2_2 \\
{\cal P}(\beta) &= 
\begin{cases} 0 & \beta \in K \\
\infty & \text{otherwise.}
\end{cases}
\end{aligned}
$$

In this case, with $\epsilon=0$ the randomized program takes the form
$$
\minimize_{\beta \in K} \frac{1}{2} \|\beta-s\|^2_2 - \omega^T\beta.
$$
with solution
$$
\hat{\beta}(s,\omega) = \mathcal{P}_K(s + \omega),
$$
where $\mathcal{P}_K$ is the Euclidean metric projection  onto the set $K$ (i.e.~the projection  with metric induced by the $\ell_2$ norm).

The subgradients to our penalty are
$$\partial {\cal P}(\beta) = N_{\beta}(K),$$
the normal cone of $K$ at $\beta$. 

In this case, the subgradient equations take the form
$$
s + \omega = \beta + z, \qquad \beta \in K, z \in N_{\beta}(K).
$$
The normal cone $N_{\beta}(K)$ can be parameterized by its unit vectors
$$
S(N_{\beta}(K)) = \left\{u \in N_{\beta}(K): \|u\|_2=1\right\}
$$
and their length  $r \in [0,\infty)$.

We can therefore draw from the joint distribution of 
$(s,\omega)$ with a density proportional to 
$$
f(s) \cdot g( \beta + r \cdot u - s) \cdot |J_{\psi}(\beta, r, u)|,
$$
where\footnote{
This parameterization is slightly different than our usual one, in that it 
constructs $s + \omega$ instead of $\omega$, but the difference
is not important.
}
$$
\psi(s,\beta, r, z) = (s,\beta+ r \cdot u-s,\beta,\beta-s, r\cdot u) \qquad \beta \in K, u \in S(N_{\beta}(K)), r \geq 0.
$$

Integrating out $s$ yields the density
$$
(f \ast g)(\beta + r \cdot u) \cdot |J_{\psi}(\beta, r, u)|.
$$
where $f \ast g$ is the density of the convolution $F \ast G$.

The Jacobian here is recognizable as that in the Weyl-Steiner
volume of tubes formula \cite{weyl,hotelling,johnstone_tube,takemura_kuriki,sun_tubes,rfg}
\begin{equation}
\label{eq:tube:jacobian}
J_{\psi}(\beta,r,u) = \det \left(I + r \cdot C_{-u} \right) =
\det \left(I + C_{-r \cdot u}\right),
\end{equation}
where $C_{\eta}$ is the curvature matrix of $\partial K$ at $\beta$ 
in the direction $\eta$ normal to $\partial K$.

Suppose now $K$ is compact and that $F \ast G$ is the uniform distribution
on $K \oplus \delta B_2$. This can be achieved
by taking $F=\delta_0$, say and $G$ the uniform
distribution itself. In this case, $\text{supp}(G) = K \oplus \delta B_2$
and we arrive
at Weyl-Steiner's tube formula
$$
\begin{aligned}
1 &= \frac{1}{{\cal H}_p(K \oplus \delta B_2)}
 \int_{(\beta, r, u): \beta + r \cdot u \in K \oplus \delta B_2}
J_{\psi}(\beta, r, u) \; d\beta \; dr \; du \\
&=
 \int_0^{\delta} \left[\int 
J_{\psi}(\beta, r, u) \; d\beta \; du\right] \; dr. \\
\end{aligned}
$$
The formula above must be interpreted as a sum over pieces or strata of 
$K$ of different dimensions so that $d\beta$ properly refers
to Hausdorff measure of differing dimensions
and the matrix in  \eqref{eq:tube:jacobian}  is also of
differing dimensions. We refer
the readers to \cite{takemura_kuriki,rfg} for further details and 
points of entry into the volume-of-tubes literature. When $K$
is a smooth body, i.e.~its boundary is a 
differentiable $(p-1)$-dimensional hypersurface then there is only stratum
and the tube formula reads
$$
{\cal H}_p(K \oplus \delta B_2)
= \int_0^{\delta} \int_{\partial K}  
J_{\psi}(\beta, r, \eta_{\beta}) \; d\beta \; dr,
$$
where $\eta_{\beta}$ is the outward pointing unit normal vector field.

When $F \ast G$ is not the uniform distribution on $K \oplus \delta B_2$ we
might instead try to compute
$$
\begin{aligned}
(F \ast G)(K \oplus \delta B_2)
&= 
 \int_{(\beta, r, u): \beta + r \cdot u \in K \oplus \delta B_2}
(f \ast g)(\beta + r \cdot u) \cdot J_{\psi}(\beta, r, u) \; d\beta \; dr \; du \\
&=
 \int_0^{\delta} \left[\int 
(f \ast g)(\beta + r \cdot u) \cdot J_{\psi}(\beta, r, u) \; d\beta \; du\right] \; dr. \\
\end{aligned}
$$
This problem is considered in some generality when
$f \ast g$ is a smooth density in Chapter 10 of \cite{rfg}.
When $F \ast G = N(0, I_p)$, their probability,
expanded in a power series in $\delta$ plays an important
role in the Gaussian Kinematic Formula \cite{GKF1,GKF2}.

Unlike in the Weyl-Steiner formula, while the first steps are similar
and are what Weyl said was something any student of calculus could do \cite{weyl}, the 
above probability are not Riemannian invariants of $K$ but depend on 
how the law $F \ast G$ relates to $K$.

While the connections to curvature measures and volume-of-tubes formulae are
enlightening, we feel concrete examples are also very important.
In this section, we consider the group LASSO as a canonical example
of a statistical learning problem with curvature.


\subsection{Forward stepwise with groups of variables: Kac-Rice with groups}

This example is a generalization of the first-step of
forward stepwise when we allow groups of variables to enter, and is a second
example of the Kac-Rice test described above. 
Non-randomized approaches to this problem are considered in \cite{loftus_q,barber_group}.

The randomized version of the Kac-Rice objective with a group LASSO penalty determined by a partition $\{1,2,\ldots,p\}=\cup_{g\in G}g $ can be written as
 \begin{equation}
 \label{kac:rice:group:lasso}
\maximize_{\eta\in \real^p} \eta^T(X^T y+\omega)-I_\mathcal{K}(\eta), \; y\times \omega\in F\times H
 \end{equation}
 where 
 $$
 \mathcal{K}=\{\eta\in \real^p:\sum_{g\in G}\lambda_g \|\eta_g\|_2\leq 1\}
 $$
 and $I_{\mathcal{K}}$ is the usual characteristic function of set $\mathcal{K}$. 
In this section we have denoted the law of $\omega$ by $H$ with density $h$ to distinguish
the density from the group index $g$.
In this case, the optimal $\eta=\eta^*$ is given by
 \[ \eta^*_g =\begin{cases} 
      \cfrac{X_g^Ty +\omega_g}{\lambda_g \|X_g^Ty +\omega_g\|_2} & \text{ if } g=g^* \\
      0 & \text{ otherwise},
   \end{cases}
\]
where 
$$
g^*=\underset{g\in G}{\argmax} \frac{1}{\lambda_g}\|X_g^Ty +\omega_g\|_2.
$$ 
Conditioning on the selection event 
$$
\hat{E}_{g^*}=\left\{(y,\omega): g^*=\underset{g\in G}{\argmax} \|X_g^Ty +\omega_g\|_2/\lambda_g\right\},
$$
the subgradient equation leads to the reconstruction map
$$
\omega=\phi_{g^*}(y,c, z_{g^*}, (z_g)_{g\neq g^*})=-X^Ty +c\begin{pmatrix} z_{g^*} \\ (z_g)_{g\neq g^*}\end{pmatrix},
$$
subject to constraints 
$$
c>0, \; \|z_{g^*}\|_2=\lambda_{g^*}, \; \|z_g\|_2\leq \lambda_g \text{ for } {g\neq g^*}.
$$
Thus we sample $(y,c,z_{g^*},(z_g)_{g\neq g^*})$ from a density proportional to 
$$ 
f(y)\cdot h(cz-X^T y)\cdot c^{|g^*|-1}
$$
and supported on 
$$
\textnormal{supp}(F)\times \mathbb{R}_{+}\times \lambda_{g^*}S(\mathbb{R}^{|g^*|})\times \prod_{g\in G, g\neq g^*}\lambda_g B_2(\mathbb{R}^{|g|}),
$$ 
where $\lambda_{g^*}S(\mathbb{R}^{|g^*|})$ is a sphere with radius $\lambda_{g^*}$ in $\mathbb{R}^{|g^*|}$ and $\lambda_gB_2(\mathbb{R}^{|g|})$ is an $\ell_2$ ball in $\mathbb{R}^{|g|}$.
The Jacobian here, is the determinant of the derivative of the map
$$
\begin{pmatrix} z_{g^*} & c \end{pmatrix}\mapsto -X^T_{g^*}y + cz_{g^*},
$$
which equals $\textnormal{det}\begin{pmatrix} cV_{g^*} & z_{g^*}\end{pmatrix}=\lambda_{g^*}c^{|g^*|-1}$, where
$V_{g^*}\in\mathbb{R}^{|g^*|}\times\mathbb{R}^{|g^*|-1}$ be an orthonormal basis completion of $z_{g^*}$.

\subsection{Group LASSO}

As mentioned above, the LASSO \eqref{eq:lasso:program} can be expressed
in terms of metric projection, as can many problems when 
$\ell$ is squared-error loss and ${\cal P}$ is either
a (semi-)norm in or a constraint on a seminorm. When
the seminorm is polyhedral the Jacobian 
in our parameterization typically depends only on the design matrix $X$.
If the unit ball of the seminorm is not polyhedral, the 
Jacobian of the parameterization involves a curvature term which
depends on the design matrix $X$.

We consider the group LASSO \cite{group_lasso}.
The group LASSO norm, defined by a partition 
$$
  \cup_{g \in G} g = \{1, \dots, p\}
$$
and weights $(\lambda_g)_{g \in G}$ is defined as
\begin{equation}
\label{eq:group:lasso:norm}
{\cal P}(\beta) = {\cal P}_{G, (\lambda_g)_{g \in G}}(\beta) = \sum_{g \in G} \lambda_g \|\beta_g\|_2,
\end{equation}
where $\beta_g=\beta[g]$ are the coefficients in the $g$-th group.

The group LASSO problem is defined by
\begin{equation}
\label{eq:group:lasso}
\minimize_{\beta \in \real^p}
\frac{1}{2} \|y - X\beta\|^2_2 + \sum_{g \in G} \lambda_g \|\beta_g\|_2.
\end{equation}
Losses other than squared-error
can of course be used with this penalty -- it is the penalty
that is the group LASSO. The square-root group LASSO \cite{sqrt_group_lasso}
replaces the squared-error loss above with
the $\ell_2$ loss $\|y-X\beta\|_2$. 

The randomized version of the group LASSO is
\begin{equation}
\label{eq:group:lasso:random}
\minimize_{\beta \in \real^p}
\frac{1}{2} \|y - X\beta\|^2_2 - \omega^T\beta + \frac{\epsilon}{2} \|\beta\|^2_2 +  \sum_{g \in G} \lambda_g \|\beta_g\|_2
\end{equation}
where $\omega \sim H$ with density $h$.

Having solved \eqref{eq:group:lasso:random} with solution $\hat{\beta}(X,y,\omega)$, we define the set of
active groups as
$$
\hat{E}(X, y,\omega) = \left\{g: \hat{\beta}_g(X, y,\omega) \neq 0 \right\}.
$$
The canonical event of interest here is
\begin{equation}
\label{eq:group:event}
\left\{(X, y, \omega): \hat{E}(X, y,\omega) = E \right\}.
\end{equation}

On this event, the subgradient equation gives the randomization reconstruction map
\begin{equation} \label{eq:group:lasso:randomization}
\begin{aligned}
\omega &= \phi_E(y, (\beta_g)_{g \in E}, (z_h)_{h \in -E})\\
&= (X^TX + \epsilon I) \begin{pmatrix}
	(\beta_g)_{g\in E} \\ 0
\end{pmatrix} -X^T y + \begin{pmatrix}
(\lambda_g \beta_g /\| \beta_g \|_2)_{g \in E} \\ (z_h)_{h\in -E} \end{pmatrix},
\end{aligned}
\end{equation}
with the restriction on the inactive subgradients as 
$$
\|z_{h}\|_2\leq \lambda_h, \;\; h\in -E.
$$


\noindent We may choose to parameterize $$\{(y,\omega, \beta,\alpha, z)\in \bar{\mathcal{S}}_0^F(G,\epsilon,\ell,\mathcal{P}): \beta_h = 0 \; \forall h \in {-E} \}$$  in terms of $(y,(\beta_g)_{g\in E}, (z_h)_{h\in -E})$ as
$$
\begin{aligned}
\label{map-gpl}
&\psi_E(y,(\beta_g)_{g\in E}, (z_h)_{h\in -E}) \\
& =  \biggl( y, (X^TX + \epsilon I) \begin{pmatrix}
	(\beta_g)_{g\in E} \\ 0
\end{pmatrix} -X^T y + \begin{pmatrix}
(\lambda_g \beta_g /\| \beta_g \|_2)_{g \in E} \\ (z_h)_{h\in -E} \end{pmatrix} ,
  \begin{pmatrix}
 (\beta_g)_{g\in E} \\ 0 \end{pmatrix}, \\
 & -X^T\left(y-X^T\begin{pmatrix}
 (\beta_g)_{g\in E} \\ 0 \end{pmatrix}\right),
 \begin{pmatrix}
(\lambda_g \beta_g /\| \beta_g \|_2)_{g \in E} \\ (z_h)_{h\in -E} \end{pmatrix}
\biggr)
\end{aligned}
$$
with the restriction on the inactive subgradient
\begin{equation}\label{eq:group_lasso:inactive}
{(z_h)_{h\in -E}}\in \prod_{h\in -E} \lambda_h B_2(\mathbb{R}^{|h|}),
\end{equation}
where $B_2(\mathbb{R}^{|h|})$ is the $\ell_2$ ball in $\mathbb{R}^{|h|}$.
The density to sample from thus, is proportional to
\begin{equation}
\label{density-gpl}
f(y) \cdot g( \phi_E(y, (\beta_g)_{g\in E},(z_h)_{h\in -E}))\cdot \left|D_{((\beta_g)_{g\in E},(z_h)_{h\in -E})} \phi_E\right|,
\end{equation}
restricted to \eqref{eq:group_lasso:inactive}.
\noindent Denoting $M=\cup_{g\in E} g$, the set of active predictors, and their cardinality as $|M|=\sum_{g\in E} |g|$, where $|g|$ is size of each group, the non-trivial Jacobian term is precisely given by
\begin{equation}
\begin{aligned}
\label{gpl:jacobian}
& D_{((\beta_g)_{g\in E},(z_h)_{h\in -E})} \phi_E \\
&=\text{det}\left(\begin{bmatrix}
 	X_M^TX_M+\epsilon I+D_1 & 0_{|M|\times (p-|M|)} \\
 	 X_{-M}^TX_M &  I_{p-|M|} 
	\end{bmatrix}
	\right) \\
& =\textnormal{det}(X_M^TX_M+\epsilon I+D_1),
\end{aligned}
\end{equation}
where $D_1$ is a block diagonal matrices of the form
$$
\label{block diag 2}
D_1=\text{diag}\left(\left(\cfrac{\lambda_g}{\|\beta_g\|_2}\left(I-\cfrac{\beta_g\beta^T_g}{\|\beta_g\|_2^2}\right)\right)_{g\in E}\right) \in\mathbb{R}^{|M|\times |M|}.
$$
\begin{remark}
For groups that are singletons ($|g|=1$ for all $g\in G$), the 
 matrix $D_1$ is zero as $\beta_g/\|\beta_g\|_2 \in \{1,-1\}$.
\end{remark}

\subsection{Conditioning on active directions in group LASSO} \label{sec:gl:active:directions}

If we choose to condition additionally on the active directions $$\left\{u_g=\cfrac{\beta_g}{\|\beta_g\|_2},\; g\in E\right\},$$ 
writing
$$
\begin{aligned}
&\beta_g = \gamma_g \cdot u_g,  \; \gamma_g > 0, & g \in E; \\
&\|z_h\|_2  \leq \lambda_h, & h \in -E,
\end{aligned}
$$
we may express the reconstruction in \eqref{eq:group:lasso:randomization} in terms of $(y,(\gamma_g)_{g\in E},(z_h)_{h\in -E})$ as
\begin{equation*}
\begin{aligned}
\label{eq:group:lasso:embed0}
&\phi_{(E,(u_g)_{g\in E})}(y, (\gamma_g)_{g\in E},(z_h)_{h\in -E})\\
&= (X^TX + \epsilon I) \begin{pmatrix}
(\gamma_g u_g)_{g\in E} \\ 0
\end{pmatrix} -X^T y + \begin{pmatrix}
(\lambda_g u_g)_{g \in E} \\ (z_h)_{h\in -E}
\end{pmatrix}
\end{aligned}
\end{equation*}
restricted to 
\begin{equation*} \label{eq:group_lasso:support}
	(y, (\gamma_g)_{g\in E}, (z_h)_{h\in -E})\in \mathbb{R}^n \times (0,\infty)^{|E|} \times \prod_{h\in -E}\lambda_h B_2(\mathbb{R}^{|h|}).
\end{equation*}
In this case the density we sample from is proportional to
\begin{equation}
\label{active-dirs-cond-gplasso}
	f(y) \cdot g(\phi_{(E,(u_g)_{g\in E})}(y, (\gamma_g)_{g\in E},(z_h)_{h\in -E}))\cdot \left| D_{((\gamma_g)_{g\in E},(z_h)_{h\in -E})}\phi_{(E,(u_g)_{g\in E})}\right|,
\end{equation}
restricted to support above.
The Jacobian of the map in this case can be computed as polynomial in $(\gamma_g)_{g\in E}$ by taking a geometric approach outlined below. Hence, conditioning further on the active directions allows us to sample from a log-concave density.\\
To see an explicit computation of the Jacobian, let $V_g \in \mathbb{R}^{|g| \times (|g|-1)}$, $g\in E$, denote a matrix whose columns are orthonormal and orthogonal to $u_g$ and set
$$
V=\textnormal{diag}\left((V_g)_{g\in E}\right) \in \mathbb{R}^{|M| \times (|M|-|E|)},\; U = \textnormal{diag}((u_g)_{g\in E}) \in \mathbb{R}^{|M| \times |E|}.
$$ 
Finally set
$$
\Gamma = \textnormal{diag}((\gamma_gI_{|g|})_{g\in E}) \in \mathbb{R}^{|M|\times |M|}.
$$


The Jacobian is the derivative of the map
$$
(U,\gamma) \mapsto \nabla \tilde{\ell}(\Gamma U) + \Lambda U,
$$
where 
$$\nabla \tilde{\ell}(\Gamma U)=(X_M^T X_M +\epsilon I_{|M|}) \Gamma U,$$
and
$$
\Lambda = \textnormal{diag}((\lambda_gI_g)_{g\in E}) \in \mathbb{R}^{|M| \times |M|}.
$$

Differentiating first with respect to $V$ (i.e.~tangent to $U$) and then $\gamma$, the 
derivative can be written as
$$
\begin{pmatrix}
(\nabla^2 \tilde{\ell}(\Gamma U) \Gamma + \Lambda) V & \nabla^2 \ell(\Gamma U) U
\end{pmatrix} = \left(\nabla^2 \tilde{\ell}(\Gamma U) \Gamma + \Lambda VV^T\right)
\begin{pmatrix} V & \Gamma^{-1}U \end{pmatrix}.
$$
Since $\tilde{\ell}$ is quadratic, $Q = \nabla^2 \ell(\Gamma U)=X_M^TX_M+\epsilon I_{|M|}$ does not involve $\Gamma U$, the determinant of the above expression is equivalent to 
$$
\det\left(\begin{pmatrix} V^T \\  U^T \end{pmatrix} Q^{-1} \begin{pmatrix}
(Q \Gamma + \Lambda) V & Q U
\end{pmatrix} \right). \footnote{Since we condition on the active directions, $U$ and $V$ are fixed matrices hence $\begin{pmatrix}V & U \end{pmatrix}^T Q^{-1}$ is fixed as well.}
$$ 
Letting
$$
\Gamma^{-} = \textnormal{diag}((\gamma_g I_{|g|-1})_{g\in E}),\;\;
\bar{\Gamma} = \begin{pmatrix}
\Gamma^- & 0  \\
0 & I_{|E|}
\end{pmatrix},
$$
the Jacobian reduces to computing
\begin{equation} \label{eq:gls:active:sets:jacobian}
\begin{aligned}
\det & \left(\bar{\Gamma} \left(I_{|M|} + \bar{\Gamma}^{-1} \begin{pmatrix}
V^TQ^{-1}\Lambda V & 0 \\
Z^TQ^{-1}\Lambda V & 0
\end{pmatrix}
\right)
\right) \\ 
&=\det\left(\Gamma^{-}+ V^TQ^{-1}\Lambda V\right).
\end{aligned}
\end{equation}
Conditioning on $U$ is now straightforward as $V$ will then be fixed in the above density.


\subsection{General geometric approach}
\label{gen:geo:approach}

More generally, one can take a geometric approach to obtain the Jacobian, even when we do not condition on the active directions. 

For the group LASSO, in computing the Jacobian it is convenient to assume that $\ell$ is somewhat
arbitrary but twice-differentiable and to introduce a
new Riemannian metric on 
the fiber of optimization variables $((\gamma_g)_{g\in E}, (z_g)_{g \in E})$ over the
data point $s$. The convenience is that the structure of the Jacobian
is similar for many problems in which the penalty is a seminorm appearing
in Lagrange form. We expect
a similar formula to hold when the penalty  is a seminorm appearing in
bound form. In the interest of space, we do 
not pursue this here.

The Riemmanian metric we use is the pull-back
of the following metric on $\real^p$:
\begin{equation}
\label{eq:riemannian}
\langle V_{\beta}, Z_{\beta} \rangle_{(s,\beta)} = V_{\beta}^T \left (\nabla^2 \ell(\beta; s) + \epsilon I \right)^{-1} Z_{\beta}.
\end{equation}
With $\epsilon=0$ and $\ell$ the log-likelihood of an 
exponential family, in local coordinates the above metric
is inverse of the observed information metric \cite{efron_hinkley} evaluated
at the point estimate $\beta$. It is perturbed by $\epsilon$ times the
Euclidean metric above
as we have modified our convex program in randomizing it.

For the group LASSO, we then now differentiate the parameterization in coordinates $\gamma_E, (z_g)_{g \in G}$.
The main reason for introducing this metric is that the image tangent vectors in the face
$${\cal F} = \left(\prod_{g \in E} \lambda_g \cdot S(\real^{|g|})\right)  \times \left(\prod_{h \in -E} \lambda_h B_2(\real^{|h|})\right)$$
remain orthogonal to the normal vectors $\sum_{g \in E} \gamma_g z_g$. The next theorem derives the Jacobian when curvature component is present in the geometry.
\begin{theorem}[Jacobian meta-theorem]
\label{thm:jacobian} 
Consider the randomized version of the problem
$$
\minimize_{\beta \in \real^p} \ell(\beta;S) + \|\beta\|
$$
for some $F$-a.s. twice-differentiable convex $\ell$ and some seminorm 
$$
\|\beta\| = \sup_{z \in K} \beta^Tz.
$$
Let 
\begin{itemize}
\item ${\cal F}$ be a smooth face of $K$; 
\item $(\eta_i)_{i \in I}$ be an orthonormal (in the Euclidean metric)
frame field normal to ${\cal F}$; 
\item $(V_j)_{j \in A}$ be an orthonormal (in the Euclidean metric) frame field
tangent to ${\cal F}$,
\end{itemize}
and let $\psi$ denote the parameterization
$$
\psi(s, z, \gamma) = \nabla \ell( \beta(z,\gamma); s) + \epsilon \cdot \beta(z,\gamma)
+ z$$
restricted to $N({\cal F})$ with $\beta(z, \gamma) = \sum_{i \in I} \gamma_i \eta_i$.
Then, 
\begin{equation}
\begin{aligned}
\label{eq:orthogonality}
\left \langle \eta_i, \frac{\partial}{\partial \gamma_l} \psi \right \rangle_{(s,\beta)} &= \delta_{il} \\
\left \langle V_j, \frac{\partial}{\partial \gamma_i} \psi  \right \rangle_{(s,\beta)} &= 0 \\
\left \langle V_j, V_k \psi  \right \rangle_{(s,\beta)} &= S_{-\beta}(V_j, V_k) + \langle V_j, V_k  \rangle_{(s,\beta)}, \\
\end{aligned}
\end{equation}
where $S_{\eta}$ is the shape operator of ${\cal F}$ as it sits in $\real^p$.
Finally,
\begin{equation}
\label{eq:jacobian:geometric}
\begin{aligned}
J \psi(s,z,\gamma) &= \det( \nabla^2 \ell(\beta; s) + \epsilon \cdot I)
\cdot \det (G(s,z,\gamma) + C_{-\beta}(z,\gamma)) \\
&= \det(I + G(s,z,\gamma)^{-1}C_{-\beta}(z,\gamma)) / \det (H(s,z,\gamma)),
\end{aligned}
\end{equation}
where
$$
\begin{aligned}
G(s,z,\gamma)_{jk} &= \langle V_{j,z}, V_{k,z} \rangle_{(s,\beta)} \\
H(s,z,\gamma)_{il} &= \langle P^{\perp}_{(s,\beta)}\eta_{i,z}, P^{\perp}_{(s,\beta)}\eta_{l,z} \rangle_{(s,\beta)},
\end{aligned}
$$
where $P^{\perp}_{(s,\beta)}$ is projection orthogonal to the tangent space of ${\cal F}$ in the metric \eqref{eq:riemannian}
and $C_{-\beta}(z,\gamma)$ is the Euclidean curvature matrix of ${\cal F}$ in the basis $V_{i,z}, 1 \leq i \leq \text{dim}({\cal F})$.
\end{theorem}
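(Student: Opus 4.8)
The plan is to compute $J\psi$ by differentiating the reconstruction map only in the fiber coordinates $(z,\gamma)$ over a fixed $s$ — this is exactly the Jacobian required in Theorem \ref{thm:change:measure} once one uses $\alpha=\nabla\ell(\beta;s)$ on the smooth-loss fiber and the normal-bundle description of $\partial(\|\cdot\|)$ to recognize $(z,\gamma)\mapsto(\beta(z,\gamma),z)$, $z\in\mathcal F$, as a parametrization of the relevant stratum — and then to exploit the adapted metric \eqref{eq:riemannian} to put the resulting matrix in block-triangular form.

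First I would write $Q=Q(s,\beta)=\nabla^2\ell(\beta;s)+\epsilon I\succ 0$, so that \eqref{eq:riemannian} reads $\langle V,Z\rangle_{(s,\beta)}=V^TQ^{-1}Z$. Since $\beta(z,\gamma)=\sum_{i\in I}\gamma_i\eta_i(z)$, the chain rule gives $\partial_{\gamma_l}\psi=\nabla^2\ell(\beta;s)\eta_l+\epsilon\eta_l=Q\eta_l$ (the summand $z$ being constant in $\gamma$), and, differentiating along $\mathcal F$ in a tangent direction $V_j$, $V_j\psi=Q(V_j\beta)+V_j$ with $V_j\beta=\sum_{i\in I}\gamma_i\nabla_{V_j}\eta_i$. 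Splitting each $\nabla_{V_j}\eta_i$ into its component tangent to $\mathcal F$ and its component normal to $\mathcal F$, and using that the shape operator satisfies $(\nabla_{V_j}\eta_i)^{\top}=-S_{\eta_i}(V_j)$, the tangential part of $V_j\beta$ equals $-S_{\beta}(V_j)=S_{-\beta}(V_j)$, while the normal part lies in $N\mathcal F$ and will play no role. Pairing $\partial_{\gamma_l}\psi$ and $V_j\psi$ against the Euclidean-orthonormal frame $(\eta_i)_{i\in I}\cup(V_j)_{j\in A}$ in the metric $Q^{-1}$ then yields \eqref{eq:orthogonality} in one line each: $\langle\eta_i,\partial_{\gamma_l}\psi\rangle_{(s,\beta)}=\eta_i^T\eta_l=\delta_{il}$; $\langle V_j,\partial_{\gamma_i}\psi\rangle_{(s,\beta)}=V_j^T\eta_i=0$; and $\langle V_j,V_k\psi\rangle_{(s,\beta)}=V_j^T(V_k\beta)+V_j^TQ^{-1}V_k=S_{-\beta}(V_j,V_k)+\langle V_j,V_k\rangle_{(s,\beta)}$, the cross-terms vanishing by Euclidean orthogonality of the $\eta$'s and $V$'s.

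For the Jacobian, $|J\psi|$ is the modulus of the determinant of the Euclidean differential $D\psi$ from the fiber (with the product of Euclidean volumes, orthonormal frame $(\partial_{\gamma_i})\cup(V_j)$) to $\real^p$. Let $B$ be the matrix whose columns are the Euclidean-orthonormal frame $(\eta_i)\cup(V_j)$, so $\det B=\pm1$. The matrix whose $(a,b)$ entry is $\langle b_a,D\psi\,c_b\rangle_{(s,\beta)}$, with $b_a$ running over $(\eta_i,V_j)$ and $c_b$ over $(\partial_{\gamma_i},V_j)$, equals $B^TQ^{-1}D\psi$ and hence has determinant $\pm\det(Q)^{-1}\det(D\psi)$; by \eqref{eq:orthogonality} it is block upper-triangular with diagonal blocks $I_{|I|}$ and $C_{-\beta}(z,\gamma)+G(s,z,\gamma)$, so its determinant is $\det(G+C_{-\beta})$. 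Equating gives $J\psi=\det(Q)\det(G+C_{-\beta})$, the first line of \eqref{eq:jacobian:geometric}. The second line is a Schur-complement identity: the Gram matrix $B^TQ^{-1}B$ of the full frame $(V_j)\cup(\eta_i)$ in the metric $Q^{-1}$ has determinant $\det(Q)^{-1}$, and in $2\times2$ block form with blocks $G$ (the $V$–$V$ block), $R$, $R^T$, $M$, its Schur complement of $G$ is exactly $H(s,z,\gamma)$, the Gram matrix of the $Q^{-1}$-orthogonal projections of the $\eta_i$ off $T\mathcal F$; hence $\det(Q)^{-1}=\det(G)\det(H)$, and substituting $\det(G+C_{-\beta})=\det(G)\det(I+G^{-1}C_{-\beta})$ together with $\det(G)=\det(Q)^{-1}\det(H)^{-1}$ yields $J\psi=\det(I+G^{-1}C_{-\beta})/\det(H)$.

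The step I expect to be the main obstacle is the first one: handling the geometry of the normal frame $\eta_i(z)$ as $z$ moves over $\mathcal F$ — in particular fixing the sign in the identification of $(\nabla_{V_j}\eta_i)^{\top}$ with the shape operator and confirming that it is $S$ evaluated in the direction $-\beta$, and checking that the normal-connection contribution to $V_j\beta$ can only enter the off-diagonal block of the measuring matrix and therefore cannot change its determinant. One must also verify that $\mathcal F$ being a smooth face of possibly codimension greater than one (as for the group LASSO) is handled correctly, and that the reduction from the Jacobian $|\det D_{(\beta,\alpha,z)}\phi|$ of Theorem \ref{thm:change:measure} to $|J\psi|$ is legitimate.
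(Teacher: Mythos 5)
Your proposal is correct and follows essentially the same route as the paper's proof: the same computation of $\partial_{\gamma}\psi = Q\eta$ and $V_j\psi = Q(V_j\beta)+V_j$, the same use of the orthogonality relations to reduce the determinant (the paper factors out $Q=\nabla^2\ell+\epsilon I$ and multiplies by the Euclidean-orthonormal frame matrix, which is exactly your $B^TQ^{-1}D\psi$ measuring-matrix step), and the same block-determinant identity $\det(Q)^{-1}=\det(G)\det(H)$ for the second form of \eqref{eq:jacobian:geometric}. The extra details you supply (the shape-operator identification of the tangential part of $V_j\beta$, the normal-connection terms landing only in the irrelevant off-diagonal block, and the Schur-complement reading of $H$) are precisely the points the paper leaves implicit, and your treatment of them is consistent with the statement.
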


\begin{proof}
The reconstruction map reads
$$
\omega = \nabla \ell\left(\sum_i \gamma_i \eta_i; s\right) + \epsilon \left(\sum_i \gamma_i \eta_i \right) + z
$$
with the constraint $\sum_i \gamma_i \eta_i$ is normal to $K$ at $z \in {\cal F}$.
We now compute
$$
\begin{aligned}
\frac{\partial}{\partial \gamma_i} \psi(s,z,\gamma) &= \left(\nabla^2 \ell\left(\sum_i \gamma_i \eta_i; s\right) + \epsilon \cdot I \right) \eta_i \\
V_j \psi(s,z,\gamma) &= \left(\nabla^2 \ell\left(\sum_i \gamma_i \eta_i; s\right) + \epsilon \cdot I\right) V_j\left(\sum_i \gamma_i \eta_i\right) + V_j.
\end{aligned}
$$
This is enough to establish \eqref{eq:orthogonality}. Now, the determinant we want to compute is the determinant of the matrix
$$
\begin{pmatrix}
V \psi & \frac{\partial}{\partial \gamma} \psi
\end{pmatrix}
$$
where $V\psi$ is the matrix whose columns are $V_j\psi$, and $\frac{\partial}{\partial \gamma}\psi$ is the matrix
whose columns are $\frac{\partial}{\partial \gamma_i}\psi$.

Factoring out $\nabla^2 \ell\left(\sum_i \gamma_i \eta_i; s\right) + \epsilon I$ and then multiplying by the orthogonal matrix
$$
\begin{pmatrix}
V^T \\ \gamma^T
\end{pmatrix}
$$
shows that the determinant we want to compute is $\det(\nabla^2 \ell(\beta;s) + \epsilon I)$ times the determinant of the matrix
$$
\begin{pmatrix}
S_{-\beta}(V_j, V_k) + \langle V_j, V_k \rangle_{(s,\beta)} & 0 \\
 \dots & I
\end{pmatrix}.
$$
Relation \eqref{eq:jacobian:geometric} now follows, with the second display simply a familiar formula for
the determinant expressed in block form
$$
\det( \nabla^2 \ell(\beta; s) + \epsilon \cdot I)^{-1} = \det(G(s,z,\gamma)) \cdot \det (H(s,z,\gamma)).
$$
When the loss is quadratic, or we use a quadratic approximation for the loss, we see that if we condition on 
$z$, then the term $\det (H(s,z,\gamma))$ is constant and only the first term must be computed.
\end{proof}

\begin{remark} \label{remark:logconcave:cond}
We note that while ${\cal F}$ may not be convex, when $\ell$ is quadratic (or a quadratic approximation is used for inference)
and we condition on $z$ the distribution for inference is log-concave in $\gamma$. More generally, if $f$ is logconcave
in $s$ as well, then the relevant distribution of $(s,\gamma)|z$ is jointly logconcave in $(s,\gamma)$.
\end{remark}

This general theorem can be applied to compute the Jacobian of the group LASSO with a parametrization in terms of the active and inactive directions and the magnitudes of active coefficients. We refer the readers to Appendix \ref{app:Jacobian:meta} for a rederivation of this Jacobian using the above geometric perspective.

\section{Multiple views of the data} \label{sec:multiple:views}

Often, an analyst might try fitting several models to a data set. 
These models might have different numbers of parameters, and different
objective functions. Examples include fitting a
regularization path \cite{glmnet} or stability selection \cite{stability_selection}.

Nevertheless, if
each model is an instance of \eqref{eq:canonical:random:program}, then
there is a straightforward procedure to construct a selective sampler.

Given a collection $(G_i, \epsilon_i, \ell_i, {\cal P}_i)_{i \in {\cal I}}$, 
the appropriate density is proportional to
\begin{equation}
f(s) \cdot \left(\prod_{i \in {\cal I}} g_i\left(\epsilon_i \beta_i + \alpha_i + z_i \right) \cdot J\psi_i(s,\beta_i,\alpha_i,z_i) \right)
\end{equation}
supported on some subset of
$$
\bigsqcup_{s \in \text{supp}(F)} \left(\prod_{i \in {\cal I}} \left\{(\beta_i, \alpha_i, z_i): \beta_i \in \real^{p_i}, \alpha_i \in \partial \ell_i(\beta_i;s), z_i \in \partial {\cal P}_i(\beta_i) \right\} \right).
$$
This set has the form of a bundle with base space $\text{supp}(F)$ and fibers as described above. Typically, each 
$\psi_i$ will be restricted to some canonical event 
${\cal B}_i$. For example, in something similar to stability
selection, each ${\cal B}_i$ might identify
the set of variables and signs chosen by the $i$-th randomized LASSO program.

We see that given $s$, the variables
in the fiber are independent. Hence, these can be sampled
in parallel. For instance, on different machines initiated with the same
random seed, we can sample $s$ IID from density $f$, then run
Gibbs or other samplers to sample $(\beta_i,\alpha_i,z_i)$.

\subsection{Multiple steps of forward stepwise}
\label{sec:fs:multiple}

As an example, we consider taking $K$ steps of forward stepwise.
The Kac-Rice test can be extended to $K$ steps of forward stepwise, where the selection event is characterized by a sequence of indices with the corresponding signs that constitute the active set at step $K$. To make it more explicit, say we consider a sequence of active variables $(j_1,\ldots,j_K)$ with corresponding signs $(s_{1},s_{2},\ldots,s_K)$, in the order in which the randomized variables enter the model. The $k$-th optimization in the set of $K$ optimizations can be written as
\begin{equation}
\label{eq:k_step_fs:objective}
\maximize_{\eta \in \real^{p-k+1}}\eta^T (X_{-\mathcal{A}_{k-1}}^T\mathcal{P}_{\mathcal{A}_{k-1}}^{\perp} y+\omega_k )-I_{\K_k}(\eta), \;\text{ where } y\times \omega_k \sim F\times G_k, 
\end{equation}
$\mathcal{A}_{k}=\{j_1,j_2,\ldots,j_k\}$ is the active set including the $k$-th step, $X_{-\mathcal{A}_{k}}$ are the columns of $X$ except for the ones corresponding to the current active set $\mathcal{A}_{k}$, the characteristic function
\[ I_{\K_k}(\eta) = \begin{cases} 
      0 & \text{if } \eta\in\K_k\\
      \infty & \text{otherwise}
   \end{cases}
\]
 and $$\K_k=\{\eta \in\real^{p-k+1}: \|\eta\|_{1}\leq 1\}.$$ Here, $\{\omega_k\}_{k=1}^K$ is a sequence of independent randomization variables with $\omega_k$ coming from a given distribution $G_k$ in $\mathbb{R}^{p-k+1}$ and corresponding density $g_k$. The projection $\mathcal{P}_{\mathcal{A}_{k}}$ is onto $X_{\mathcal{A}_k}$ and $\mathcal{P}_{\mathcal{A}_{k}}^{\perp}$ is the residual after this projection. The selection event after $K$ steps can be written as
\begin{equation*}
\begin{aligned}
\label{FST-k step}
 \hat{E}_{\{(s_k,j_k)\}_{k=1}^K} &= \biggl\{ \left(y,\{\omega_k\}_{k=1}^K\right)\in\mathbb{R}^n\times\prod_{k=1}^K\mathbb{R}^{p-k+1}:  \text {sign}(X_{j_k}^T{\mathcal{P}_{\mathcal{A}_{k-1}}^{\perp}}y+\omega_{k,j_k})=s_{k},  \\
& s_{k} (X_{j_k}^{T}{\mathcal{P}_{\mathcal{A}_{k-1}}^{\perp}} y+\omega_{k,j_k})\geq \underset{j\in\mathcal{A}_{k-1}^c}{\max}|X_j^T {\mathcal{P}_{\mathcal{A}_{k-1}}^{\perp}} y+\omega_{k,j}|, k=1,\ldots, K \biggr \}.
\end{aligned}
\end{equation*}
The randomization reconstruction map for the $k$-th step, from the subgradient equation is given by
\begin{equation*}
\label{KKT2}
\phi_k(y,z_k)=-X_{-\mathcal{A}_{k-1}}^T\mathcal{P}_{\mathcal{A}_{k-1}}^{\perp} y+z_k,
\end{equation*}
where, sub-differential $z_k\in \real^{p-k+1}$ from the $k$-th step is restricted to 
$$
z_k\in \partial I_{\K_k}(\eta_k^*),
$$
and $\eta_k^*\in\mathbb{R}^{n-k+1}$ is the optimal solution for the $k$-th optimization, as stated in \eqref{eq:k_step_fs:objective}.
 More explicitly, 
 the normal cone is given by 
\begin{equation} \label{eq:k_step_fs:normal:cone}
\partial I_{\K_k}(\eta_k^*)=\{  c\cdot u: u\in\mathbb{R}^{p-k+1}, u_{j_k}=s_k, |u_j|\leq 1\;\forall j\in\mathcal{A}_{k-1}^c, c>0\}.
\end{equation}


The sampler density is thus proportional to
\begin{equation}
\label{eq:fs:ksteps:density}
f(y)\cdot \prod_{k=1}^K g_k\left(z_k-X_{-\mathcal{A}_{k-1}}^T\mathcal{P}_{\mathcal{A}_{k-1}}^{\perp} y\right),
\end{equation}
supported on 
$$
(y, z_1,\ldots, z_k)\in \mathbb{R}^n\times \prod_{k=1}^K \partial I_{\K_k}(\eta_k^*).
$$
For logistic regression, one can replace the $T$ statistics above with the score statistics
as described in Remark \ref{remark:FS:glm}.

Further examples of algorithms that choose a variable based on multiple views of the data are presented in 
Appendix \ref{appendix:multiple}.

\section{Selective sampling via projected Langevin} \label{sec:implementation}


In the examples above, we sample from a joint density of data $S\in \real^n$ and optimization variables $T\in \real^p$, from a probability space on $(S,T)$, achieved via a reparametrization. The selective sampler now samples from a pull back measure on a transformed probability space under a reconstruction map for randomization $\omega$. This idea implemented on randomized convex programs typically allow us to sample from a joint density that is supported on a relatively simpler region. \\
On a canonical selection event $\mathcal{B}_{(E,z_E)}$, defined by the active variables $E$ and their signs/directions $z_E$, the joint sampling density is 
\begin{equation}
\label{sampling density-sampler}
h(s,t)\propto f(s)\cdot g(\phi_{(E,z_E)}(s,t))\cdot \left|D_{t}\phi_{(E,z_E)}(s,t)\right| \cdot 1_{{D}_{(E,z_E)}}(s,t),
\end{equation}
supported on constraint set ${D}_{(E,z_E)}\subset \real^n \times \real^p$, under reconstruction map $\phi_{(E,z_E)}$.\\
 When we consider 
$$f(s)\propto \exp(-\tilde{f}(s)) \text{ and } g(\omega)\propto \exp(-\tilde{g}(\omega)),$$
the negative of logarithm of the sampling density is proportional to
\begin{equation}
\label{log-sampling density}
\tilde{h}(s,\phi_{(E,z_E)}(s,t))=\tilde{f}(s)) +\tilde{g}(\phi_{(E,z_E)}(s,t))-\log \left|D_t\phi_{(E,S_E)}(s,t)\right|
\end{equation}
supported on set ${{D}_{(E,z_E)}}$.\\
We sample from the target density $\tilde{h}$ using updates from a projected Langevin random walk.   
\begin{algorithm}[H] 
\caption{\textbf{Projected Langevin Updates}}
\label{alg:sampling-approx}
\textit{ Iterative update [k+1]:} The $(k+1)$ update based on previous update $(s^{(k)},t^{(k)})$ is given by
\begin{equation*}
\begin{pmatrix} s^{(k+1)}\\ t^{(k+1)} \end{pmatrix}=\mathcal{P}\left(\begin{pmatrix}s^{(k)}\\ t^{(k)}\end{pmatrix}-\eta\begin{pmatrix}\grad_s \tilde{h}(s^{(k)},\phi_{(E,z_E)}(s^{(k)},t^{(k)}))\\ \grad_t \tilde{h}(s^{(k)},\phi_{(E,z_E)}(s^{(k)},t^{(k)}))\end{pmatrix}+\sqrt{2\eta}\begin{pmatrix}\xi_1^{(k)}\\ \xi_2^{(k)}\end{pmatrix}\right)
\end{equation*}
for step-size $\eta$, $\xi_1^{(k)}\times  \xi_2^{(k)}   \sim \mathcal{N}(0,I_n)\times \mathcal{N}(0,I_p)$ and $ \mathcal{P}$ is projection onto set $D_{(E,z_E)}$. Computing the $(k+1)$ update based on previous update $(s^{(k)},t^{(k)})$ thus involves two steps:
\begin{algorithmic}[1]
\STATE  Computing gradient of the negative of log density w.r.t. $(s,t)$, that is
 $$\begin{pmatrix}\grad_s \tilde{h}(s^{(k)},\phi_{(E,z_E)}(s^{(k)},t^{(k)}))\\ \grad_t \tilde{h}(s^{(k)},\phi_{(E,z_E)}(s^{(k)},t^{(k)}))\end{pmatrix}$$
\STATE  Compute projection of update from a noisy version of gradient descent of the log density $$\begin{pmatrix}s^{(k)}\\ t^{(k)}\end{pmatrix}-\eta\begin{pmatrix}\grad_s \tilde{h}(s^{(k)},\phi_{(E,z_E)}(s^{(k)},t^{(k)}))\\ \grad_t \tilde{h}(s^{(k)},\phi_{(E,z_E)}(s^{(k)},t^{(k)}))\end{pmatrix}+\sqrt{2\eta}\begin{pmatrix}\xi_1^{(k)}\\ \xi_2^{(k)}\end{pmatrix}$$ onto constraint set $D_{(E,z_E)}$.
\end{algorithmic}
\end{algorithm}

When $\tilde{h}(s,\phi_{(E,z_E)}(s,t))$ is a convex function on restriction ${{D}_{(E,z_E)}}$ and satisfies smoothness properties
\begin{equation*}
\left|\grad\tilde{h}(s_1,\omega_1)-\grad\tilde{h}(s_2,\omega_2)\right|\leq \beta\left| (s_1,\omega_1)-(s_2,\omega_2)\right|
\end{equation*}
\begin{equation*}
\left|\grad\tilde{h}(s_1,\omega_1)\right|\leq C,
\end{equation*}
for all $ (s_1,\omega_1), (s_2,\omega_2) \in D_{(E,z_E)}$, and the support ${{D}_{(E,z_E)}}$ is convex with non-empty interior and contained in a finite Euclidean ball, the projected Langevin sampler indeed converges to the target density in \eqref{sampling density-sampler} as proved in \cite{bubeck_langevin}. 


\begin{remark} For most examples, data $S$ is sampled from a Gaussian density, in which case, $\grad \tilde{f}$ is unbounded. One should be able to remove this condition by considering a restriction of $\tilde{f}$ to a bounded set of probability close to $1$ under Gaussian density $f$. 
\end{remark}

\begin{remark} The choices for randomizations typically include a Gaussian, Laplacian or Logistic distribution.\footnote{\cite{randomized_response} use heavy-tailed distributions such as Laplace or logistic as choices for randomization.} Under a Logistic density, $\tilde{g}$ is seen to satisfy the smoothness conditions. Qualitatively, our samples do not change much with any of these choices of randomization.
\end{remark}

\begin{remark} Typically, the support set ${{D}_{(E,z_E)}}$ is convex, but not bounded. We could again remove the boundedness condition on ${{D}_{(E,z_E)}}$, by considering a compact subset of probability close to 1 under sampling density $h$.
\end{remark}

\subsection{Examples}

 We revisit few examples like the LASSO with fixed and random design, the Kac-Rice with forward stepwise, the $\ell_1$-penalized Logistic to implement the projected Langevin sampler. We offer inference on the parameters in the selected model $E$ conditional on the canonical selection event $B_{(E,z_E)}$ of selecting active set $E$ and their signs/ directions, obtained upon solving the corresponding randomized programs. We base our tests on the randomized pivot, developed in \cite{randomized_response} for selective inference in a randomized setting \footnote{To construct the pivots from \cite{randomized_response} we can either integrate over the null statistic or condition on it. In the case of LASSO or $\ell_1$-penalized logistic with random design, the null statistic is $T_{-E}$ defined in \eqref{eq:logistic:T:defined}. In the case of LASSO with fixed $X$, the null statistic is $X_{-E}^Ty$. Here we report the results where we integrate over the null statistic (conditioning on the null is easier computationally since it reduces the size of the sampling space).}.


\begin{Example}
\emph{LASSO with fixed design (Section \ref{sec:random:lasso}):}
The response $y$ is modeled as $\mathcal{N}(Xb,\sigma^2)$ with $|\text{supp}(b)|=s$ and $\sigma^2=1$. We test the null hypothesis $H_0: b_{E,j}=0$, $b_{E,j}$ denoting the $j$-th coefficient in the selected model $E$, conditioning on the selection event $B_{(E,z_E)}$ and sufficient statistics $\mathcal{P}_{E\setminus j}y$, corresponding to nuisance parameters as discussed in \ref{sec:conditional}. Thus, the sampling density is proportional to \eqref{eq:randomized:lasso:fixedX:density} on the support 
$$\{(y,\beta_E,u_{-E}): \mathcal{P}_{E\setminus j}y=Y_{\text{obs}}, \text{diag}(z_E)\beta_E>0, \|u_{-E}\|_{\infty}\leq 1\}.$$
The updates from projected Langevin sampler can be written as
\begin{align*}
\begin{pmatrix}
	 \tilde{y}^{(k+1)} \\ \beta_E^{(k+1)} \\ u_{-E}^{(k+1)}
\end{pmatrix}
=\begin{pmatrix}	 
	  y^{(k)}- \eta y^{(k)}/\sigma^2 - \eta \nabla_y\tilde{g}(\phi_{(E,z_E)}(y^{(k)},\beta_E^{(k)}, u_{-E}^{(k)}))+\sqrt{2\eta}\xi_1^{(k)} \\
	  \mathcal{P}_1\left(\beta_E^{(k)}-\eta \nabla_{\beta_E}\tilde{g}(\phi_{(E,z_E)}(y^{(k)},\beta_E^{(k)}, u_{-E}^{(k)}))+\sqrt{2\eta}\xi_2^{(k)}\right)\\
  	 \mathcal{P}_2\left(u_{-E}^{(k)}-\eta\nabla_{u_{-E}}\tilde{g}(\phi_{(E,z_E)}(y^{(k)},\beta_E^{(k)}, u_{-E}^{(k)}))+\sqrt{2\eta}\xi_3^{(k)}\right)
	\end{pmatrix},
\end{align*}
where $(\xi_1^{(k)}, \xi_2^{(k)}, \xi_3^{(k)})\sim\mathcal{N}(0,I_p)\times\mathcal{N}(0, I_{|E|})\times\mathcal{N}(0, I_{p-|E|})$ and independent of everything else and $ \mathcal{P}_1$ is projection onto the orthant $\mathbb{R}^{|E|}_{z_E}$ and $ \mathcal{P}_2$ is the projection onto the cube $[-1,1]^{p-|E|}$. The conditioning on nuisance statistic is implemented by fixing $\mathcal{P}_{E\setminus j}y$ with the update being $$y^{(k+1)}=Y_{\text{obs}}+\mathcal{P}_{E\setminus j}^{\perp}\tilde{y}^{(k+1)},$$ where $\mathcal{P}_{E\setminus j}$ is the projection onto the column space of $X_{E\setminus j}$.
\end{Example}


\begin{Example}
\emph{$\ell_1$-penalized logistic \& LASSO with random design matrix (Section \ref{sec:logistic}):} In $\ell_1$-penalized logistic the response $y$ is modeled as i.i.d. $ \text{Bernoulli}(\pi(x_i^Tb))$, with $|\text{supp}(b)|=s$, with the same null hypothesis as above. In the case of LASSO with random design, the response is modeled as $y\sim\mathcal{N}(Xb,\sigma^2 I_n)$ with $\sigma^2=1$. If we denote as $\bar{\beta}_E$ the unpenalized MLE using only selected covariates $X_E$ in the respective problems, then we can describe the inference for both problems at once. We sample from the selective density in \eqref{eq:logistic:density}, supported on
$$\{(t,\beta_E,u_{-E}): \mathcal{P}_{E\setminus j} t=T_{\text{obs}}, \text{diag}(z_E)\beta_E>0, \|u_{-E}\|_{\infty}\leq 1\},$$
conditioning again on selection event $B_{(E,z_E)}$ and nuisance statistics. Here, $\mathcal{P}_{E\setminus j}$ is the projection onto the column space of $\begin{pmatrix}
	\tilde{\Sigma}_{E,E} & 0 \\ 0 & 0
\end{pmatrix}$ and the nuisance statistic equals 
$$\mathcal{P}_{E\setminus j} T=\tilde{\Sigma}_{E,E}\bar{\beta}_E$$ where $\Sigma_{E,E}$ is the covariance of $\bar{\beta}_E$ (we use a bootstrap estimate of this covariance in the shown results) and $\tilde{\Sigma}_{E,E}=\Sigma_{E,E}-\frac{\Sigma_{E,E}e_{E,j}e_{E,j}^T}{e_{E,j}^T\Sigma_{E,E}e_{E,j}}$.
The updates in the projected Langevin sampler are
\begin{align*}
\begin{pmatrix}
	 \tilde{t}^{(k+1)}\\ \beta_E^{(k+1)} \\ u_{-E}^{(k+1)}
\end{pmatrix}
=\begin{pmatrix}	 
	  t^{(k)}- \eta \Sigma^{-1}t^{(k)}- \eta \nabla_t\tilde{g}(\phi_{(E,z_E)}(t^{(k)},\beta_E^{(k)}, u_{-E}^{(k)}))+\sqrt{2\eta}\xi_1^{(k)} \\
	  \mathcal{P}_1\left(\beta_E^{(k)}-\eta \nabla_{\beta_E}\tilde{g}(\phi_{(E,z_E)}(t^{(k)},\beta_E^{(k)}, u_{-E}^{(k)}))+\sqrt{2\eta}\xi_2^{(k)}\right)\\
  	 \mathcal{P}_2\left(u_{-E}^{(k)}-\eta\nabla_{u_{-E}}\tilde{g}(\phi_{(E,z_E)}(t^{(k)},\beta_E^{(k)}, u_{-E}^{(k)}))+\sqrt{2\eta}\xi_3^{(k)}\right)
	\end{pmatrix},
\end{align*}
where $(\xi_1^{(k)}, \xi_2^{(k)}, \xi_3^{(k)})\sim\mathcal{N}(0,I_p)\times\mathcal{N}(0, I_{|E|})\times\mathcal{N}(0, I_{p-|E|})$ and independent of everything else and $ \mathcal{P}_1$ is projection onto the orthant $\mathbb{R}^{|E|}_{z_E}$ and $ \mathcal{P}_2$ is the projection onto the cube $[-1,1]^{p-|E|}$. The conditioning as usual is implemented by keeping $\mathcal{P}_{E\setminus j}t$ fixed and updating $$t^{(k+1)}=T_{\text{obs}}+\mathcal{P}_{E\setminus j}^{\perp}\tilde{t}^{(k+1)}.$$ 
\end{Example}


\begin{Example}
\emph{Forward stepwise for Kac-Rice (Section \ref{sec:fs:multiple}):} We perform $K$ steps of randomized forward stepwise as in \eqref{eq:k_step_fs:objective} with the response generated as $y\sim\mathcal{N}(Xb, \sigma^2 I_n)$ and $|\text{supp}(b)|=s$. The null hypothesis is $H_0:b_{j_K}=0$, where $j_k$ is the predictor chosen by the algorithm in the $k$th step, $k=1,\ldots, K$. 
We sample from the density \eqref{eq:logistic:density} with $s=K-1$ supported on 
\begin{equation*}
	\left\{(y, z_1,\ldots, z_K): \mathcal{P}_{\mathcal{A}_{K-1}}y =Y_{obs}, z_k\in \partial I_{\K_k}(\eta_k^*), k=1,\ldots, K \right \},
\end{equation*}
where we condition on the sufficient statistic for the nuisance parameters in the selected model consisting of predictors $X_{\mathcal{A}_K}$.
The updates from projected Langevin sampler when one computes the Kac-Rice objective conditional on the selection event determined by forward stepwise with $K$ steps are
\begin{align*}
\begin{pmatrix}
	 \tilde{y} \\ z_1 \\ \vdots \\ z_K 
\end{pmatrix}^{(i+1)}
=\begin{pmatrix}	 
	  y^{(i)}- \eta y^{(i)}/\sigma^2-\eta \nabla_y \left(\sum_{k=1}^K\tilde{g}_k(\phi_k(y^{(i)}, z_k^{(i)}))\right)+\sqrt{2\eta}\xi_1^{(i)} \\
	  \mathcal{P}_1\left(z_1^{(i)}-\eta \nabla_{z_1}\tilde{g}_1(\phi_1(y^{(i)}, z_1^{(i)}))+\sqrt{2\eta}\xi_2^{(i)}\right)\\
  	 \vdots \\
  	 \mathcal{P}_K\left(z_K^{(i)}-\eta \nabla_{z_K}\tilde{g}_K(\phi_K(y^{(i)}, z_K^{(i)}))+\sqrt{2\eta}\xi_{K+1}^{(i)}\right)
	\end{pmatrix},
\end{align*}
where $\mathcal{P}_k$ is the projection onto $\partial I_{\mathcal{K}_k}(\eta_k^*)$, given in \eqref{eq:k_step_fs:normal:cone}, for all $k=1,\ldots, K$.\footnote{This projection is easily done using Remark \ref{remak:cone}.} The conditioning step is then done as above by updating $y$ while keeping $\mathcal{P}_{\mathcal{A}_{K-1}}y$ fixed. 
\end{Example}


\begin{Example}
\emph{Group LASSO with fixed $X$ (Section \ref{sec:gl:active:directions}):}
The response $y$ is modeled as $\mathcal{N}(Xb,\sigma^2)$ with $|\text{supp}(b)|=s$ and $\sigma^2=1$. We test the null hypothesis $H_0: b_{E,g}=0$, $b_{E,g}$ denoting the coefficient corresponding to group $g$, $g\in E$, conditioning on the selection event $B_{(E,(u_g)_{g\in E})}$.\footnote{This includes conditioning on the active directions as well to get a log-concave density (Section \ref{sec:gl:active:directions})} Additionally, we condition on the  sufficient statistics for the nuisance parameters, $\mathcal{P}_{E\setminus j}y$.
The sampling density is proportional to \eqref{active-dirs-cond-gplasso} on the support 
$$
\left\{(y,(\gamma_g)_{g\in E},(z_{h})_{h\in -E})\in \real^n \times (0,\infty)^{|E|}\times \prod_{h\in -E} \lambda_h B_2(\real^{|h|}) :\mathcal{P}_{E\setminus g}y=Y_{obs}\right\}.
$$
The updates for  
$$
\begin{pmatrix}
	 \tilde{y}^{(k+1)} \\ (\gamma_g)_{g\in E}^{(k+1)} \\ (z_{h})_{h\in -E}^{(k+1)}
\end{pmatrix}
$$ 
from projected Langevin sampler can be written as
\begin{align*}
 \begin{pmatrix}	 
	  y^{(k)}- \eta y^{(k)}/\sigma^2 - \eta \nabla_y\tilde{g}(\phi^{(k)})+\sqrt{2\eta}\xi_1^{(k)} \\
	  \mathcal{P}_1\left((\gamma_g)_{g\in E}^{(k)}-\eta \nabla_{(\gamma_g)_{g\in E}}\tilde{g}(\phi^{(k)})+\eta (\textnormal{Tr}(D_{g}^{(k)}))_{g\in E}+ \sqrt{2\eta}\xi_2^{(k)}\right)\\
  	 \mathcal{P}_2\left((z_h)_{h\in -E}^{(k)}-\eta\nabla_{(z_{h})_{h\in -E}}\tilde{g}(\phi^{(k)})+\sqrt{2\eta}\xi_3^{(k)}\right)
	\end{pmatrix},
\end{align*}
where 
$$ \phi^{(k)} = \phi_{(E,(u_g)_{g\in E})}(y^{(k)},(\gamma_g)_{g\in E}^{(k)},(z_{h})_{h\in -E}^{(k)}), $$
$$(\xi_1^{(k)}, \xi_2^{(k)}, \xi_3^{(k)})\sim\mathcal{N}(0,I_{n})\times\mathcal{N}(0, I_{|E|})\times\mathcal{N}(0, I_{p-|M|}),$$ independent of everything else, $ \mathcal{P}_1$ is projection onto the orthant $(0,\infty)^{|E|}$ and $ \mathcal{P}_2$ is the projection onto the product of balls $\prod_{h\in -E} \lambda_h B_2(\real^{|h|})$. Here, term $(\textnormal{Tr}(G_g^{(k)}))_{g\in E}$ comes from differentiating the logarithm of the Jacobian \eqref{eq:gls:active:sets:jacobian} with respect to $(\lambda_g)_{g\in E}$ at iteration $k$:
$$
D_{g}^{(k)}=\left((\Gamma^{-})^{(k)} + V^TQ^{-1}\Lambda V\right)^{-1}G_g,
$$ 
 where
$$
  G_g = \frac{\partial V^TQ^{-1}\Lambda V}{\partial \lambda_g} \;\textnormal{  and  }\; (\Gamma^{-})^{(k)} = \textnormal{diag}\left((\gamma_g^{(k)}\mathbb{I}_{|g|-1})_{g\in E}\right).
$$

Note that $G_g$ is a fixed matrix, thus at every iteration we just need to invert $\left((\Gamma^{-})^{(k)}+V^TQ^{-1}\Lambda V\right)$ in order to compute $D_g^{(k)}$. The conditioning on $\mathcal{P}_{E\setminus g}y$ is done as in the above examples.
\end{Example}





\section*{Acknowledgements}
Jonathan Taylor was supported in part by National Science Foundation grant DMS-1208857 and
Air Force Office of Sponsored Research grant 113039. Jelena Markovic was supported by Stanford Graduate Fellowship.
Jonathan Taylor would like to thank the Berkeley Institute of Data Science, where part of this manuscript was written
while on sabbatical in Fall 2015.


\bibliographystyle{agsm}
\bibliography{selective_sampler}


\clearpage
\appendix
\section{Nuisance parameters \& conditional distributions}
\label{sec:conditional}

 In forming selective hypothesis tests or intervals when
the model ${\cal M}$ is an exponential family, it was noted in
\cite{optimal_inference} that the classical approach of conditioning
on appropriate sufficient statistics can be used to eliminate
nuisance parameters. In this section we describe selective
samplers that can be used to sample from such conditional distributions.

Specifically, suppose we use the
randomized Lasso \eqref{eq:lasso:randomized:program}
with $X$ considered fixed 
and  $\bar{E}$ is a subset of features
that we will use to form the selected model
\begin{equation}
\label{eq:selected:model}
{\cal M}_{\bar{E}} = 
\left\{\mathcal{N}(X_{\bar{E}}\beta_{\bar{E}}, \sigma^2_{\bar{E}}):
\beta_{\bar{E}} \in \real^{|\bar{E}|}, \sigma^2_{\bar{E}} > 0\right\}.
\end{equation}
Often, for a given $(E,s_E)$ observed after
fitting the randomized Lasso, we will choose $\bar{E}=E$ but this
is not strictly necessary.
As
$\sigma^2_{\bar{E}}$ is part of the indexing set for ${\cal M}_{\bar{E}}$, we are assuming $\sigma^2_{\bar{E}}$ is unknown.

Suppose we want a selective test of
$$
H_{0,j|\bar{E}}: \beta_{j|\bar{E}}=0
$$
in the model ${\cal M}_{\bar{E}}$. Standard exponential family calculations
detailed in \cite{optimal_inference} tell us that we can construct
such a test in the presence of the nuisance parameters $(\beta_{\bar{E} \setminus j|\bar{E}}, \sigma^2_{\bar{E}})$ by conditioning on the 
appropriate sigma-algebra:
$$
\sigma\left(X_{\bar{E}\setminus j}^Ty, \|y\|^2_2\right) =
\sigma\left(\mathcal{P}_{\bar{E}\setminus j}y, \|(I - \mathcal{P}_{\bar{E} \setminus j})y\|^2_2\right).
$$

For any fixed values  of the sufficient statistics, say
$(W_{obs}, SSE_{obs})$  (with $\mathcal{P}_{\bar{E} \setminus j}W_{obs}=W_{obs}$)
the conditional
distribution for any $F \in {\cal M}_{\bar{E}}$
is supported on the set
$$
D_{obs} = \left\{y: y=W_{obs} + r, \mathcal{P}_{\bar{E} \setminus j}r=0, \|r\|_2=SSE_{obs} \right\}.
$$
Under $H_{0,j|\bar{E}}$ it is uniformly
distributed over the above set.

We must therefore sample from the
set
$$
\left\{(y,\beta_E, u_{-E}): y \in D_{obs}, \text{diag}(s_E)\beta_E > 0, \|u\|_{-E} \leq 1
\right\}
$$
with a density proportional to
$$
f(y) \cdot g \left(\epsilon \begin{pmatrix} \beta_E \\ 0 \end{pmatrix} - X^T(y-X_E\beta_E) + \lambda \begin{pmatrix} s_E \\ u_{-E} \end{pmatrix} \right).
$$

\section{LASSO and variants} \label{sec:lasso:variants}

In this section, we describe two common variants of the LASSO.

\subsection{LASSO in bound form}

The LASSO program in bound form is defined
as
\begin{equation}
\label{eq:bound:form}
\minimize_{\beta \in \real^p: \|\beta\|_1 \leq \delta } \frac{1}{2} \|y-X\beta\|^2_2
\end{equation}
with its corresponding randomized version 
\begin{equation}
\label{eq:bound:form:random}
\minimize_{\beta \in \real^p: \|\beta\|_1 \leq \delta } \frac{1}{2} \|y-X\beta\|^2_2 +  \frac{\epsilon}{2} \|\beta\|^2_2 - \omega^T\beta.
\end{equation}

In the notation established so far
$$
\begin{aligned}
S &= y \\
\ell(\beta;y) &= \frac{1}{2} \|y-X\beta\|^2_2 \\
{\cal P}(\beta) &= 
\begin{cases}
0 & \|\beta\|_1 \leq \delta \\
\infty & \text{otherwise.}
\end{cases}
\end{aligned}
$$

Typically, we will be interested in doing inference when the constraint
above is tight. In this case, for the canonical event ${\cal B}_{(E,z_E)}$ 
the set $\bar{\solutionset}^F_0(G,\epsilon,\ell,{\cal P})$
can be parametrized by
$$
\begin{aligned}
\lefteqn{\biggl\{(y,\beta_E, u_{-E},c): y \in \real^n, }\\
& \qquad \beta_E \in \real^{|E|}, \|\beta_E\|_1 = \delta, \text{diag}(z_E)\beta_E>0,  \\
& \qquad u_{-E} \in \real^{p-|E|}, \|u_{-E}\|_{\infty} \leq 1,\\
& \qquad  c\in\mathbb{R}, c > 0 \biggr\}.
\end{aligned}
$$
Another possible reparametrization, can be stated as
$$
\begin{aligned}
\lefteqn{\biggl\{(y,\beta_{E\setminus 1}, u_{-E}, c): y \in \real^n, }\\
& \qquad \beta_{E\setminus 1} \in \real^{|E|-1}, s_{E,1}\beta_{E,1} =\delta- s_{E\setminus 1}^T\beta_{E\setminus 1}, \text{diag}(s_{E\setminus 1})\beta_{E\setminus 1}>0,  \\
& \qquad u_{-E} \in \real^{p-|E|}, \|u_{-E}\|_{\infty} \leq 1, \\
& \qquad  c\in\mathbb{R}, c > 0 \biggr\},
\end{aligned}
$$
where $\beta_{E\setminus 1} = (\beta_{E,2},\ldots, \beta_{E,|E|})$
and similarly $s_{E\setminus 1}$. The above parameterization can be expressed as
$$
\begin{aligned}
\psi_{(E,z_E)}(y,\beta_{E\setminus 1},u_{-E},c) 
= \biggl ( y, \: \epsilon &\begin{pmatrix} \beta_{E} \\ 0 \end{pmatrix} - X^T(y-X_E\beta_E) + c \begin{pmatrix} z_E \\ u_{-E} \end{pmatrix}, \\
& \begin{pmatrix} \beta_{E} \\ 0 \end{pmatrix}, 
-X^T(X-X_E\beta_E) ,   c\begin{pmatrix} z_E \\ u_{-E} \end{pmatrix} \biggr ),
\end{aligned}
$$
where $\beta_E$ is expressed in terms of $\beta_{E\setminus 1}$ as above and the corresponding randomization reconstruction map is 
\begin{equation*}
\phi_{(E,z_E)}(y,\beta_{E\setminus 1}, u_{-E}, c)  = \epsilon \begin{pmatrix} \beta_{E} \\ 0 \end{pmatrix} - X^T(y-X_E\beta_E) + c \begin{pmatrix} z_E \\ u_{-E} \end{pmatrix},
\end{equation*}
where
$$
\beta_E = \begin{pmatrix} (\delta -s_{E\setminus 1}^T\beta_{E\setminus 1})s_{E,1} \\ \beta_{E\setminus 1} \\ 0 \end{pmatrix}.
$$

The sampling density is proportional to 
\begin{equation*}
\begin{aligned}
	f(y) \: & \cdot  g\left(\phi_{(E,z_E)}(y,\beta_{E\setminus 1}, u_{-E},c) \right)\cdot \left|\det(D_{(\beta_{E\setminus 1},u_{-E},c)} \phi_{(E,z_E)})\right|,
\end{aligned}
\end{equation*}
where the last determinant equals
\begin{equation*}
\begin{aligned}
-c^{p-|E|}\det \begin{pmatrix}
	- \epsilon s_{E\setminus 1}^T s_{E,1} +X_1^T(-s_{E,1}X_1s_{E\setminus 1}^T+X_{E\setminus 1 }) & s_{E,1} \\
	\epsilon I + X_{E\setminus 1}^T(-s_{E,1}X_1s_{E\setminus 1}^T+X_{E\setminus 1 }) & s_{E\setminus 1}
\end{pmatrix}.
\end{aligned}
\end{equation*}

\subsection{Basis pursuit} \label{sec:basis:pursuit}

The basis pursuit problem \cite{basis_pursuit} is defined
as
\begin{equation}
\label{eq:basis:pursuit}
\minimize_{\beta \in \real^p: \|y-X\beta\|_2 \leq \delta } \|\beta\|_1.
\end{equation}
In the notation established so far
$$
\begin{aligned}
S &= y \\
\ell(\beta;y) &= \begin{cases}
0 & \|y-X\beta\|_2 \leq \delta \\
\infty & \text{otherwise}
\end{cases} \\
 {\cal P}(\beta) &= \|\beta\|_1.
\end{aligned}
$$

The set on which the problem \eqref{eq:basis:pursuit} has a solution is
$$
\left\{y: \hat{\theta}(y) \neq 0 \right\} = \left\{y: \|(I-P_C)y\|_2 \leq \delta \right\},  
$$
where $P_C$ denotes projection onto $\text{col}(X)$.

Its randomized version is
\begin{equation}
\label{eq:basis:pursuit:random}
\minimize_{\beta \in \real^p: \|y-X\beta\|_2 \leq \delta } \frac{\epsilon}{2} \|\beta\|^2_2 - \omega^T\beta + \|\beta\|_1.
\end{equation}
This problem, with non-random choices of $\omega$ is considered in
\cite{tfocs}. We see that
$$
\left\{(y,\omega): \bar{\theta}(y,\omega) \neq \emptyset \right \}
= \left\{(y,\omega): \hat{\theta}(y) \neq \emptyset \right\}.
$$

Generically, when a solution exists, the constraint will be tight, hence
our events of interest will typically condition on $\|y-X\beta\|_2=\delta$.
We see then
$$
\begin{aligned}
\bar{\solutionset}_0^F(G,\epsilon,\ell, {\cal P})
=  \bigl\{& (y,\omega,\beta,\alpha,z) : \: y \in \text{supp}(F), \|y-X\beta\|_2=\delta, \omega \in \text{supp}(G), \\
&\omega = \epsilon\beta+\alpha+z, \alpha = c X^T(X\beta-y), c > 0, z \in \partial {\cal P}(\beta) \bigr\}.
\end{aligned}
$$

For the canonical event ${\cal B}_{(E,z_E)}$, the set 
$\bar{\solutionset}^F_{\mathcal{B}}(G,\epsilon,\ell, {\cal P})$ can be parameterized
by
\begin{equation} \label{eq:basis_pursuit:support}
\begin{aligned}
\lefteqn{ \biggl\{(y,\beta_E, u_{-E}, c): \|(I-P_C)y\|_2 \leq \delta,} \\
& \qquad \beta_E \in \real^{|E|}, \text{diag}(z_E)\beta_E > 0, \|y-X_E\beta_E\|_2=\delta, \\
& \qquad u_{-E} \in \real^{p-|E|}, \|u_{-E}\|_{\infty} \leq 1, \\
& \qquad c \in \real, c > 0 \biggr\}.\\
\end{aligned}
\end{equation}
This set has the form of a bundle over a subset of $\real^n$, 
with fibers that are the product a $|E|-1$ dimensional
ellipse, a  $p-|E|$ dimensional cube and a half-line.

The parameterization is
\begin{equation*}
\begin{aligned}
\psi_{(E,z_E)}(y,\beta_E,u_{-E},c) = \biggl(y,  \:\epsilon &\begin{pmatrix} \beta_E \\ 0 \end{pmatrix} - c X^T(y-X_E\beta_E) + \begin{pmatrix} z_E \\ u_{-E} \end{pmatrix}, \\
 & \begin{pmatrix}\beta_E \\ 0\end{pmatrix}, -cX^T(y-X_E\beta_E), \begin{pmatrix} z_E \\ u_{-E} \end{pmatrix} \biggr).
\end{aligned}
\end{equation*}

Denoting $\phi_{(E,z_E)}(\beta_E, u_{-E}, c) = \epsilon \begin{pmatrix} \beta_E \\ 0 \end{pmatrix} - c X^T(y-X_E\beta_E) + \begin{pmatrix} z_E \\ u_{-E} \end{pmatrix}$,
the sampling density is proportional to 
$$
 f(y) \cdot g \left(\phi_{(E,z_E)}(\beta_E, u_{-E}, c) \right)\cdot 
\left|\text{det}(D_{(\beta_E, u_{-E},c)}\phi_{(E,z_E)})\right|
$$
and restricted to \eqref{eq:basis_pursuit:support}, where $\text{det}(D_{(\beta_E, u_{-E},c)}\phi_{(E,z_E)})$ is the Jacobian of the parameterization
in the coordinates described above, i.e.~coordinates on the product of an $|E|-1$ dimensional
ellipse, an $p-|E|$ dimensional cube and a half-line.

\subsection{Square-root LASSO}

Using the LASSO with an unknown noise level is somewhat of a 
chicken-and-egg problem, as knowing which value of $\lambda$ to choose 
is somewhat difficult. 
Suppose that instead we use the square-root LASSO.
The square-root LASSO program \cite{sqrt_lasso,scaled_lasso} is defined as
\begin{equation}
\label{eq:sqrt:lasso}
\minimize_{\beta \in \real^p} \|y-X\beta\|_2 + \lambda \|\beta\|_1.
\end{equation}
Selective inference for this program was considered
in \citep{selective_sqrt}.
This program has the advantage that it is possible to choose
a reasonable value of $\lambda$ without knowing anything about the noise level.
Its randomized version is
\begin{equation}
\label{eq:sqrt:lasso:randomized}
\minimize_{\beta \in \real^p} \|y-X\beta\|_2 + \frac{\epsilon}{2} \|\beta\|^2_2 - \omega^T\beta + \lambda \|\beta\|_1.
\end{equation}
We can take the law of $\omega$ to be independent of any particular
noise level, though we probably should choose $\epsilon=O(n^{-1/2})$ if thinking of something like the pairs model.

In the notation established so far
$$
\begin{aligned}
S &= y, \\
\ell(\beta;y) &=  \|y-X\beta\|_2, \\
{\cal P}(\beta) &= \lambda \|\beta\|_1.
\end{aligned}
$$

We are most interested in inference when $y-X\hat{\beta}(y,\omega) \neq 0$ on which $\ell$ is differentiable and for the canonical event
${\cal B}_{(E,z_E)}=\{\beta: \text{diag}(z_E)\beta_E > 0, \beta_{-E}=0\}$ we can parametrize $\{(y,\omega, \beta,\alpha,z)\in \bar{\mathcal{S}}_0(G,\epsilon, \ell,\mathcal{P}): y\in D_{obs}, \beta\in \mathcal{B}_{(E,z_E)}\}$ as
\begin{equation*}
\begin{aligned}
\psi_{(E,z_E)}(y,\beta_E, u_{-E}) = \Bigg(y, \epsilon & \begin{pmatrix} \beta_E \\ 0 \end{pmatrix} - \frac{ X^T(y-X_E\beta_E)}{\|y-X_E\beta_E\|_2} +  \lambda \begin{pmatrix} z_E \\ u_{-E} \end{pmatrix}, \\
&\begin{pmatrix} \beta_E \\ 0\end{pmatrix}, - \frac{ X^T(y-X_E\beta_E)}{\|y-X_E\beta_E\|_2},\lambda\begin{pmatrix} z_E \\ u_{-E} \end{pmatrix} \Bigg).
\end{aligned}
\end{equation*}
with the domain $D_{obs}\times \mathbb{R}^{|E|}_{z_E}\times[-1,1]^{p-|E|}$.
We therefore must sample from a density proportional to
$$
f(y)\cdot g \left(\epsilon \begin{pmatrix} \beta_E \\ 0 \end{pmatrix} - \frac{X^T(y-X_E\beta_E)}{\|y-X_E\beta_E\|_2} + \lambda \begin{pmatrix}  z_E \\ u_{-E} \end{pmatrix} \right) \cdot \left|\det \left(\frac{X_E^TR(y,\beta_E)X_E}{\|y-X_E\beta_E\|_2} + \epsilon I \right)\right|
$$
and restricted to to the domain of $\psi_{(E,z_E)}$, where
\begin{equation*}
\label{eq:sqrt:projection}
R(y,\beta_E) = I_n -\frac{ (y - X_E\beta_E) (y - X_E\beta_E)^T}{\|y-X_E\beta_E\|^2_2}.
\end{equation*}

\subsection{More complex penalties: generalized LASSO and other quadratic programs} \label{sec:qp}

Clearly, not every statistical learning problem of interest can be expressed in terms of the 
LASSO or forward stepwise. For example, the generalized LASSO \cite{tibs_taylor} considers a penalty
of the form
$$
{\cal P}(\beta) = \lambda \|D\beta\|_1.
$$
Selective inference for the entire solution path of the generalized LASSO, analgous to the solution path of 
Least Angle Regression and LASSO in \cite{spacings} has been considered in \cite{selective_genlasso}. We consider
a fixed value of $\lambda$ combined with randomization.
For some $D$, the programs can be re-expressed as LASSO problems but not all $D$ (c.f. \cite{tibs_taylor}). 
More generally, we might be confronted with solving a problem of the form
\begin{equation}
\minimize_{\beta} \ell(\beta;S) + \bar{\cal P}(D\beta - \gamma).
\end{equation}
That is, we consider our usual problem with
$$
{\cal P}(\beta) = \bar{\cal P}(D\beta-\gamma).
$$

For example, a linearly constrained quadratic program might take the form
\begin{equation}
\label{QP:general}
\minimize_{\beta} \frac{1}{2} \beta^TQ\beta - S^T\beta
\end{equation}
subject to $D\beta \leq \gamma$ with $S \sim F$. In this case, $\bar{\cal P}$ is the non-positive
cone constraint.

Sometimes, the Fenchel conjugate ${\mathcal{P}}^*$ is simple in the sense that its
naturally associated selection events are easy to parametrize. In this case,
we can proceed as before and simply consider randomized programs of the form
\begin{equation}
\label{QP:general:random}
\minimize_{\beta} \frac{1}{2} \beta^TQ\beta - (S+\omega)^T\beta + \bar{\cal P}(D\beta - \gamma)  + \frac{\epsilon}{2} \|\beta\|^2_2.
\end{equation}

However, when ${\mathcal{P}}^*$ is complex, then our usual approach may be computationally
expensive. In this section, we describe an alternative randomization
scheme that may yield simpler sampling algorithms as described in Section \ref{sec:implementation}.

For concreteness, we consider the quadratic program \eqref{QP:general:random}. Similar calculations
hold for the generalized LASSO by simply replacing one convex conjugate with another.
The issue of parameterization of selection events
arises quickly upon inspection of the subgradient equation for
the randomized program \eqref{QP:general:random}.
It will often be of interest to condition on the set of
tight constraints
$$
\hat{E}(S,\omega) = \left\{j: D_j^T\hat{\beta}(S,\omega)=\gamma_j \right\}.
$$
On this event, the subgradient equation reads
$$
\omega = Q\beta - S + D_{E}^Tz_E
$$
where $z_E \geq 0$ and $D_E\beta=\gamma_E$. 
We see the sub-gradient $D_E^Tz_{E}$ lies in the cone generated by the rows of $D_{E}$. 
If we were to parametrize this selection event, we might write it as
$$
\omega = Q\beta - S + \bar{z}_E
$$
where $\bar{z}_E \in \text{cone}(D_E)$. For arbitrary $D$ and $E$, this cone may be rather complex. Each step of the
projected Langevin implementation described in Section \ref{sec:implementation} requires projection onto 
this set. Projection onto this set can be achieved via a non-negative least squares problem, which would
in principal need to be re-run for every step of the sampler.

We shall instead solve the corresponding randomized dual program. Introducing
variable $v=D\beta-\gamma$, the dual of \eqref{QP:general} which solves 
$$\maximize_{u}\left\{ \minimize_{\beta, v: v\leq 0} \frac{1}{2} \beta^TQ\beta - S^T\beta + \frac{\epsilon}{2} \|\beta\|^2_2 + u^T(D\beta-\gamma-v)\right\},$$
subject to $u\geq 0$ is equivalent to solving
$$\minimize_u \frac{1}{2}(D^Tu-S)^T(Q+\epsilon I)^{-1}(D^Tu-S) + u^T\gamma$$
subject to $u \geq 0$ (for the generalized LASSO, the non-negative cone constraint is replaced with the constraint $\|u\|_{\infty} \leq \lambda$).  
The set of tight constraints can be thought of as corresponding to non-zero $u$'s in this dual problem $E$. When the dual 
problem is strongly convex this is a one-to-one correspondence, though if it is not strongly convex there may be more than one dual solution \cite{tibs_taylor_df}. 

The randomized dual problem solves the program
\begin{equation}
\label{rand:dual:QP}
\minimize_u \frac{1}{2}(D^Tu-S)^T(Q+\epsilon I)^{-1}(D^Tu-S) + u^T(\gamma - \omega)
\end{equation}
subject to $u \geq 0$, which is dual to 
$$\minimize_{\beta} \frac{1}{2} \beta^T Q\beta - S^T\beta +\frac{\epsilon}{2} \|\beta\|^2_2+ I_{\mathcal{K}}(\beta)$$
with $$\mathcal{K}=\{\beta: D\beta-\gamma +\omega\leq 0\}$$ and $I_{\mathcal{K}}$ denoting its characteristic function. However, for the above randomized QP that we propose to solve, the law of $\omega$ has to supported on the set 
$$
\left\{\omega: \cap_j \left\{\beta: D_j^T\beta \leq \gamma_j-\omega_j\right\} \neq 0 \right\}.
$$ to ensure feasibility of the primal problem, though the dual problem always has a solution. 
\begin{remark}
Generally, this is not a problem in examples like the generalized LASSO, where $\mathcal{P}$ is finite everywhere. Further, in the applications of selective inference we have in mind, a data analyst is typically solving statistical learning programs to suggest parameters of interest. For example, Heirnet \cite{heirnet} is a statistical learning
method that seeks hierarchical interactions that can be expressed as a linearly constrained quadratic program. In this case,
a randomization which makes the primal problem infeasible is not catastrophic.
\end{remark}
Denoting
$$Q_{\epsilon}=(Q+\epsilon I)^{-1},$$
one solution to the above problem that ensures feasibility of the primal problem is to instead consider
\begin{equation}
\label{rand:dual:QP:smooth}
\minimize_u \frac{1}{2}(D^Tu-S)^T Q_{\epsilon}(D^Tu-S) + u^T(\gamma - \omega)+\frac{\epsilon'}{2}\|u\|^2_2
\end{equation}
subject to $u \geq 0$ for $\epsilon'>0$. This always yields a feasible primal
$$\minimize_{z\leq 0}\frac{1}{2}z^T(DQ_{\epsilon} D^T +\epsilon' I)^{-1} z-z^T(DQ_{\epsilon} D^T +\epsilon' I)^{-1}(DQ_{\epsilon}S-\gamma+\omega),$$
as $DQ_{\epsilon} D^T +\epsilon' I$ is a positive definite matrix with added positive $\epsilon'$.
The reconstruction map in this case is given by
\begin{equation}
\label{eq:QP:reconstruction}
\omega=\phi_E(S,u_,z_{-E})=DQ_{\epsilon}(D^T_Eu_E-S)+\epsilon' \begin{pmatrix} u_E\\ 0 \end{pmatrix} + \gamma + \begin{pmatrix} 0 \\ z_{-E} \end{pmatrix},
\end{equation}
which allows us to sample $(S,u_{E}, z_{-E})$ from a sampling density proportional to 
$$
f(S)\cdot g(\phi_E(S,u_E,z_{-E}))
$$
and supported on $\textnormal{supp}(F)\times \mathbb{R}^{|E|}_{+}\times \mathbb{R}^{p-|E|}_{-}$, where $\mathbb{R}_{+}=\{x\in\mathbb{R}:x\geq 0\}$ and $\mathbb{R}_{-}=\{x\in\mathbb{R}:x\leq 0\}$.

Another possibility is to consider the dual of the randomized objective \eqref{QP:general:random} given by
$$\minimize_{u\geq 0} \frac{1}{2}(D^T u -S-\omega)^T Q_\epsilon (D^T u -S-\omega)+u^T\gamma,$$
which lead to reconstruction map
$$DQ_{\epsilon}\omega =DQ_{\epsilon}D_E^T u_E- DQ_{\epsilon}S+\gamma +\begin{pmatrix} 0\\ z_{-E}\end{pmatrix}$$
where we have conditioned on the value of the set
$$
\bar{E}(S,\omega) = \left\{j: \hat{u}_j(S,\omega) > 0\right\}
$$
of being $E$. As noted above,  the set $\bar{E}(S,\omega)$ may not be unique even when $E(S,\omega)$ is (c.f. \cite{tibs_taylor_df}). In this
sense, we are conditioning on a particular value of the boundary set, the boundary set determined by the solver we use to solve
this dual problem.

Denoting the density of $DQ_{\epsilon}\omega$ as $\tilde{g}$, the sampling density of $(S,u_E,z_{-E})$ is now proportional to
$$f(S)\cdot\tilde{g}\left(DQ_{\epsilon}D_E^Tu_E-DQ_{\epsilon}S+\gamma+\begin{pmatrix} 0 \\ z_{-E}\end{pmatrix}\right)$$
and restricted to 
$$z_{-E}\leq 0, \; u_E\geq 0\; \text{ and } \; z_{-E}+\gamma_{-E} \in \text{col}(D_{-E}).$$
To avoid enforcing the implicit constraint of $\mathcal{P}_{D_{-E}}^{\perp}(z_{-E}+\gamma_{-E})$ being $0$ at each step of the sampler, we can sample from an approximate density for $(S, u_E, z_{-E})$ that substitutes the Dirac delta operator $\delta_0(\mathcal{P}_{D_{-E}}^{\perp}(z_{-E}+\gamma_{-E}))$ with a smoothed version, an example being 
$$f(S)\cdot\tilde{g}\left(DQ_{\epsilon}D_E^Tu_E-DQ_{\epsilon}S+\gamma+\begin{pmatrix} 0 \\ z_{-E}\end{pmatrix}\right)\cdot \exp\left(-\cfrac{1}{2\epsilon'}\left\|\mathcal{P}_{D_{-E}}^{\perp}(z_{-E}+\gamma_{-E})\right\|_2^2\right)$$ 
supported on $\textnormal{supp}(F)\times \mathbb{R}^{|E|}_{+}\times \mathbb{R}^{p-|E|}_{-}$, a much simpler constraint set.


\section{Neighborhood selection}
\label{sec:neighborhood:selection}

We now consider neighborhood selection (a selective
sampler for which appears in \cite{selective_sampler_nips}).
\citet{neighbourhood_selection} proposed neighborhood selection with the LASSO to achieve this goal. 
The algorithm can be formulated as the following optimization problem, for any node $i$
\begin{equation}
    \label{eq:ns_optimize}
\hbeta^{i, \lambda} = \argmin_{\beta\in\mathbb{R}^p, \beta_i = 0} \|x_i - X\beta\|_2^2 + \lambda \|\beta\|_1,
\end{equation}
where with slight abuse of notation, $X \in \real^{n \times p}$ is the data matrix observed from $n$ i.i.d 
observations, and $x_i$ is the $i$-th column of $X$.
Choice of $\lambda$ is discussed in Chapter 3 of \cite{neighbourhood_selection}.
Denote $$\hat{B} = (\hat{\beta}^1, \hat{\beta}^2, \dots, \hat{\beta}^p),$$ we propose the randomized version of \eqref{eq:ns_optimize},
\begin{equation}
    \hat{B} = \argmin_{B\in\mathbb{R}^{p\times p}, B_{ii} = 0 \:\forall i\in\{1,\ldots, p\}} \|X - XB\|_F^2 + \lambda \|B\|_1 + \frac{\epsilon}{2} \|B\|_F^2 - \Omega B, 
\end{equation}
where $\Omega = (\omega^1, \dots, \omega^p) \overset{i.i.d}{\sim} G$. This is the matrix form of \eqref{eq:canonical:random:program},
and the KKT conditions are decomposable across the nodes.

Suppose for node $i$, $E^i$ is the active set for \eqref{eq:ns_optimize}, $s^i$ is the corresponding signs, $\lambda \cdot u^i$ is the 
subgradient corresponding to the inactive variables except the $i$-th and $X_{-i}$ is the columns of X except the $i$-th column. For every node $i$, the $i$-th coordinate of $\beta^i$ is held to be zero, and \eqref{eq:ns_optimize} is in fact a regression of dimension $p-1$, thus
    \begin{equation} \label{eq:neighborhood:alpha}
    \alpha^i = -X_{-i}^T (x_i - X_{E^i} \beta_{E^i}^i),
    \end{equation}
    and the reparametrization map, 
    $$
    \psi_{(E,z_E)}(X, B^-, u) = (X, \alpha + z + \epsilon B^-,B^-,\alpha, z),
    $$
    where 
    $$
    \begin{aligned}
        \alpha &= (\alpha^1, \alpha^2, \dots, \alpha^p), \; \alpha_i \textnormal{ from }\eqref{eq:neighborhood:alpha}, \\
        z &= (z^1, z^2, \dots, z^p), \; z^i = \lambda \begin{pmatrix} s^i\\ u^i \end{pmatrix}, \\
        B^- & \in \real^{(p-1) \times p} \text{ is } p\times p \text{ matrix without the diagonal elements}.
    \end{aligned}
    $$
    Since $\omega^i$, $1\leq i\leq p$, are independent, and the Jacobian 
    $$
    J\psi_{(E,z_E)}(X,B^-,u) = \prod_{i \in \Gamma} \det(X_{E^i}^T X_{E^i} + \epsilon I),
    $$
    conditioning on $(E^i, s^i)$, the distribution for selective inference has the following density,
    \begin{equation}
        \label{eq:ns_law}
    \begin{aligned}
    f(X) &\cdot \prod_{i \in \Gamma} g\left(\epsilon
    \begin{pmatrix}
        \beta^i_{E^i} \\
        0
    \end{pmatrix} + 
    \lambda \begin{pmatrix}
        s^i \\
        u^i
    \end{pmatrix} - 
    X_{-i}^T (x_i - X_{E^i} \beta^i_{E^i})
    \right) \\
    &\cdot \prod_{i \in \Gamma} \left|\det(X_{E^i}^T X_{E^i} + \epsilon I)\right|.
    \end{aligned}
    \end{equation}

After seeing the active set $E = (E^1, \dots, E^p)$, it is natural to choose the selected model (i.e.~the edge set $\mathcal{E}$) to be $E$. However,
since the active set $E$ is not necessarily symmetric, we choose the edge set $\mathcal{E}$ to be $E^{\lor}$, where
$$
E^{\lor} = \{(i,j) \:|\: E_{ij} = 1 \text{ or } E_{ji} = 1\}.
$$
Under this model, the distribution of $X$ is an
exponential family. More specifically, conditioning on the set of edges $E^{\lor}$, 
$$
\begin{aligned}
f(X) &\propto \exp\left(-\frac{1}{2} \text{Tr}(\Theta X^T X)\right) \\ 
&= \exp\left(- \sum_{(i,j) \in E^{\lor}} \Theta_{ij} x_i^T x_j - \frac{1}{2}\sum_{i \in \Gamma} \Theta_{ii} \|x_i\|^2 \right). 
\end{aligned}
$$
Note that this is an exponential family with sufficient statistics 
$$\{x_i^Tx_j, ~ (i,j) \in E^{\lor}, ~ \|x_i\|^2, i \in \Gamma\}.$$ Therefore,
the law for selective inference \eqref{eq:ns_law} is also an exponential family with the same sufficient statistics. To construct
the UMPU tests as in \cite{optimal_inference} for the null hypothesis $H_{0, ij}: \Theta_{ij} = 0$, we condition on the sufficient 
statistics corresponding to the nuisance parameter and sample from the law \eqref{eq:ns_law}.


\section{Recomputing Jacobian for group lasso} 
\label{app:Jacobian:meta}

We apply the Jacobian meta theorem \ref{thm:jacobian} to the group Lasso to re-derive the Jacobian in the selective sampler density from a geometric perspective. 
For the tangential frame $V_{i,z}$ we can split this over groups as
$$
(\Pi_g^T(V_{g,j,z_g})_{1 \leq j \leq |g|-1})_{g \in E} = (V_{g,z_g})_{g \in E}
$$
where $\Pi_g:\real^p \rightarrow \real^g$ is projection onto
the $g$ coordinates represented by the matrix $\Pi_g \in \real^{|g| \times p}$. The the vectors $V_{g,j,z_g}$ are chosen to be are orthonormal within $T_g(\lambda_g S(\real^g))$ additionally satisfying $V_{g,j,z_g}^Tz_g=0$. For the normal frame we can take
$\eta_g = z_g / \lambda_g$ and
$$
\beta = \sum_{g \in G} \gamma_g z_g / \lambda_g.
$$
In the standard basis of $T_{\beta}\real^p$, the metric \eqref{eq:riemannian} has matrix
$$
X^TX + \epsilon I.
$$
The curvature matrix $C_{-\beta}$ is 0 except on the tangent spaces coming from active groups, on which it is block
diagonal with blocks
$$
\frac{\gamma_g}{\lambda_g} I_{|g|-1}, g \in E.
$$
Hence the matrix of interest is
$$
G(z,\gamma)^{-1}C_{-\beta}(z,\gamma) = \begin{pmatrix} G_E(z,\gamma)^{-1}C_{E,-\beta}(z,\gamma) & 0 \\ 0 & 0 \end{pmatrix}
$$
where the blocks of $G_E$ are
$$
G_{E,gh}(z,\gamma) =  V_{g,z_g}^T\Pi_g (X^TX + \epsilon I)^{-1} \Pi_h^T V_{h,z_h}, \qquad {g,h \in E}.
$$
The eigenvalues of the matrix which show up in the determinant are the same as the reciprocal
of the eigenvalues of the matrix with blocks
\begin{equation}
\label{eq:group:final:jacobian}
\bar{G}_{E,gh}(z,\gamma) =  \frac{\lambda_g}{\gamma_g}^{1/2} V_{g,z_g}^T(X^TX + \epsilon I)^{-1} V_{h,z_h} \frac{\lambda_h}{\gamma_h}, \qquad {g,h \in E}.
\end{equation}
Finally, the matrix
$H(z,\gamma)$ can be ignored if we consider $X$ to be fixed.

\section{Multiple views of the data}
\label{appendix:multiple}

In this section, we consider three more algorithms that choose variables
by querying the data several times.


\subsection{Top $K$ screening} 
\label{sec:topK:screening}

A simple way of variable selection called top $K$ screening selects the $K$ most correlated features $X_j$ with response vector $y$. 
For a given $K$, the unrandomized version of such a screening selects most correlated variables $(j_1,j_2,\ldots,j_K)$ with corresponding signs $(s_1,s_2,\ldots,s_K)$, such that 
$$s_1 X_{j_1}^T y\geq s_2 X_{j_2}^T y \geq \cdots \geq s_K X_{j_K}^T y \geq \max_{j \not \in \{j_1, \dots, j_K\}}|X_j^Ty|.$$
Selective inference in the nonrandomized setting for this problem was considered in 
\citep{lee_screening}.

Here, we consider a randomized version of the optimization above yielding the $k$-th most correlated variable for $1\leq k\leq K$, with added independent randomization variable $\omega_k\sim G_k \in \mathbb{R}^{p-k+1}$ and corresponding density $g_k$ is given by
\begin{equation}
\label{eq:k_step_ms:objective}
\maximize_{\eta \in \real^{p-k+1}}\eta^T (X_{-\mathcal{A}_{k-1}}^T y+\omega_k )-I_{\K_k}(\eta), \;\text{ where } y\sim F.\end{equation}
Here, $\mathcal{A}_{k}=\{j_1,j_2,\ldots,j_k\}$ is the active set including the $k$-th step, $X_{-\mathcal{A}_{k}}$ are the columns of $X$ except for the ones corresponding to the current active set $\mathcal{A}_{k}$, $\omega_k$ is a sequence of randomizations and $I_{\K_k}(\eta)$ is the characteristic function of $$\K_k=\{\eta \in\real^{p-k+1}: \|\eta\|_{1}\leq 1\},$$ that is,
\[ I_{\K_k}(\eta) = \begin{cases} 
      0 & \text{if } \eta\in\K_k\\
      \infty & \text{otherwise.}
   \end{cases}
\] 
Denote the optimal solution of \eqref{eq:k_step_ms:objective} as
\begin{equation}
	\eta_{k,j}^*=\begin{cases}
		s_k & \textnormal{ if } j=j_k\\
		0 & \textnormal{ otherwise, }
	\end{cases} 
\end{equation}
where $j\in \{1,\ldots, p\}\setminus\mathcal{A}_{k-1}=\mathcal{A}_{k-1}^c$\footnote{$\eta_k^*\in\mathbb{R}^{p-k+1}$ is indexed by this set and all $p-k+1$-dimensional vectors in this and the following section will be indexed by $\mathcal{A}_{k-1}^c$.} and $j_k = \underset{j\in\mathcal{A}_{k-1}^c}{\textnormal{argmax}}\left|X_j^Ty+\omega_{k,j}\right|$.
The subgradient equation in $k$-th step leads to a reconstruction map for the randomization given by
\begin{equation*}
\label{KKT1}
\omega_k=\phi_k(y,z_k)=-X_{-\mathcal{A}_{k-1}}^T y+z_k,
\end{equation*}
constraining sub-differential $z_k\in \real^{p-k+1}$ to 
$$
z_k\in \partial I_{\K_k}(\eta_k^*)=\{ c\cdot u: u\in\mathbb{R}^{p-k+1}, u_{j_k}=s_k, |u_j|\leq 1 \; \forall j\in\mathcal{A}_{k-1}^c, c>0\}.
$$

Conditioning on the selection event of choosing the $K$ most correlated variables with their corresponding signs
\begin{equation*}
\begin{aligned}
\label{ms-K step}
\hat{E}_{\{(s_k,j_k)\}_{k=1}^K} = &\biggl\{ \left(y,\{\omega_k\}_{k=1}^K\right)\in\mathbb{R}^n\times\prod_{k=1}^K\mathbb{R}^{p-k+1}:  \text {sign}(X_{j_k}^T y+\omega_{k})=s_{k},  \\
& s_{k} (X_{j_k}^{T}y+\omega_{k})\geq \underset{j\in\mathcal{A}_{k-1}^c}{\max}|X_j^T y+\omega_{k}|, k=1,\ldots, K \biggr \}.
\end{aligned}
\end{equation*}
 we sample $(y, z_1,\ldots, z_k)$ from the selective sampling density proportional to
\begin{equation}
\label{eq:ms:ksteps:density}
f(y)\cdot \prod_{k=1}^K g_k\left(z_k-X_{-\mathcal{A}_{k-1}}^T y\right),
\end{equation}
supported on 
$$
\mathbb{R}^n\times \prod_{k=1}^K \partial I_{\K_k}(\eta_k^*).
$$
For logistic regression, one can replace the $T$ statistics above with the score statistics
as described in Remark \ref{remark:FS:glm}.


\subsection{Stagewise algorithms}

Instead of fully projecting out the current variables at each step as in forward stepwise, one can use an incremental approach
as in a stagewise algorithm \cite{tibs_stagewise}.
The first step of a randomized version of such an algorithm might consist of solving the problem
\begin{equation}
\label{eq:stagewise}
\maximize_{\eta: \|\eta\|_1 \leq 1}\eta^T (X^T(y-X\alpha_0)+\omega_1)
\end{equation}
with $\alpha_0=\eta^*_0=0$. We update
$$
\alpha_1 = \alpha_{0} + \delta \cdot \eta^*_0
$$
for some learning rate $\delta > 0$.
Subsequent
problems are given by
\begin{equation}
\label{eq:stagewise:later}
\maximize_{\eta: \|\eta\|_1 \leq 1}\eta^T (X^T(y-X\alpha_k)+\omega_k)
\end{equation}
with solution $\eta^*_k$ and
$$
\alpha_k = \delta \cdot \sum_{j=0}^{k-1} \eta^*_j.
$$
After $K$ steps, the sampler density is thus proportional to
\begin{equation}
\label{eq:fs:stagewise:density}
f(y)\cdot \prod_{k=1}^K g_k\left(z_k-X^T(y - X \alpha_{k-1})\right),
\end{equation}
$$
(y, z_1,\ldots, z_k)\in \mathbb{R}^n\times \prod_{k=1}^K \partial I_{\K}(\eta_k^*).
$$
with $\K=\{\eta \in\real^{p}: \|\eta\|_{1}\leq 1\}$.


\subsection{Screening via thresholding randomized Post-LASSO}

An alternative way to screen is through the following two-stage procedure where we use randomized LASSO as in \eqref{eq:lasso:randomized:program}  in the first stage to select the model $(E,z_E)$. In the second stage, we solve an unpenalized, randomized program as in \eqref{eq:full:ms:objective} with the selected predictors $X_E$
\begin{equation}
	\minimize_{\gamma\in \real^{|E|}}\frac{1}{2}\|y-X_E\gamma\|_2^2+\frac{\epsilon_1}{2}\|\gamma\|_2^2-\omega_1^T\gamma,  \; \; ((X,y), \omega_1)\sim F\times G_1,
\end{equation}
where $G_1$ is a known distribution on $\mathbb{R}^{|E|}$ and $\omega_1$ is independent from the randomization variable $\omega$ used in the first stage. We perform a second stage of selection based on the output of the above convex program to threshold the resulting coefficients of $\hat{\gamma}((X,y), \omega_1)$ resulting in a further selected model $$\tilde{E}=\{i: |\hat{\gamma}_{i}((X,y),\omega_1)|>a\sigma \}$$ with their signs $\tilde{z}_{\tilde{E}}=\textnormal{sign}(\hat{\gamma}_{\tilde{E}}((X,y),\omega_1))$, where $a$ is a constant and $\sigma$ is again the scaling which can be estimated by the noise variance of the selected model as in Section \ref{sec:full:model:screening}. The canonical event of interest from the two-step procedure becomes
\begin{equation*}
\begin{aligned}
	\mathcal{B}_{(E,z_E, \tilde{E}, \tilde{z}_{\tilde{E}})} =\{(\beta_E, & u_{-E}, \gamma):\: \textnormal{diag}(z_E)\beta_E>0, \|u_{-E}\|_{\infty}\leq 1, \\
	&\textnormal{diag}(\tilde{z}_{\tilde{E}})\gamma_{\tilde{E}}>0, |\gamma_{i}|\geq a\sigma \;\forall\; i\in \tilde{E}, \|\gamma_{-\tilde{E}}\|_{\infty}<a\sigma \}.
\end{aligned}
\end{equation*}

Now the sampling density on $((X,y), \beta_E, u_{-E}, \gamma)$ becomes proportional to
\begin{equation*}
	f(X,y)\cdot g\left(\epsilon \begin{pmatrix} \beta_E \\0 \end{pmatrix}-X^T(y-X_E\beta_E)+\lambda\begin{pmatrix}  z_E \\ u_{-E} \end{pmatrix}\right)\cdot g_1\left(\epsilon_1\gamma-X^T(y-X\gamma)\right)
\end{equation*}
where the optimization variables $(\beta_E, u_{-E},\gamma)$ are restricted to $\mathcal{B}_{(E,z_E, \tilde{E}, \tilde{z}_{\tilde{E}})}$.

\end{document}